\definecolor{darkblue}{rgb}{0, 0, .4}
\definecolor{grey}{rgb}{.7, .7, .7}
  \newcommand{\href}[2]{#2}
  \newcommand{\url}[2]{#2}
\newtheorem{firstdef}{Definition}  
\newtheorem{theorem}{Theorem}[section]
\newtheorem{lemma}[theorem]{Lemma}
\theoremstyle{definition}
\newtheorem{definition}[theorem]{Definition}
\newtheorem{example}[theorem]{Example}
\theoremstyle{remark}
\newtheorem{remark}[theorem]{Remark}
\numberwithin{equation}{section}
\theoremstyle{theorem}
\newtheorem{corollary}[theorem]{Corollary}
\newtheorem{proposition}[theorem]{Proposition}
\newtheorem{conjecture}[theorem]{Conjecture}
\newcommand{\Z}[0]{\mathbb{Z}}
\newcommand{\Q}[0]{\mathbb{Q}}
\newcommand{\ul}[1]{\underline{#1}}
\newcommand\scalemath[2]{\scalebox{#1}{\mbox{\ensuremath{\displaystyle #2}}}}
\DeclareMathOperator{\rowz}{row_{\mathbb{Z}}}
\DeclareMathOperator{\rowq}{row_{\mathbb{Q}}}
\renewcommand{\L}{\mathcal{L}}
\renewcommand{\P}{\mathcal{P}}
\newcommand{\M}{\mathcal{M}}
\newcommand{\E}{\mathcal{E}}
\newcommand{\I}{\mathcal{I}}
\newcommand{\W}{\mathcal{W}}
\newcommand{\U}{\mathcal{U}}
\newcommand{\A}{\mathcal{A}}
\newcommand{\GM}{\mathsf{GM}}
\newcommand{\kr}{{k}_{\mathsf{row}}}
\newcommand{\kc}{{k}_{\mathsf{col}}}
\newcommand{\kk}{{k}_{\mathsf{*}}}
\newcommand{\G}[0]{\Gamma}
\DeclareMathOperator{\diag}{diag}
\title{Integer diagonal forms for subset intersection relations}
\author[J. Ducey]{Joshua E. Ducey}
\address{James Madison University\\ 
Department of Mathematics and Statistics\\
60 Bluestone Dr\\
Roop Hall room 305\\
Harrisonburg VA 22807 (USA)}
\email{\href{mailto:duceyje@jmu.edu}{\texttt{duceyje@jmu.edu}}}
\author[L. Engelthaler]{Lauren Engelthaler}
\address{University of Dallas}
\email{lengelthaler@udallas.edu}
\author[J. Gathje]{Jacob Gathje}
\address{College of Saint Benedict and Saint John's University}
\email{JGATHJE001@csbsju.edu}
\author[B. Jones]{Brant Jones}
\address{James Madison University}
\email{jones3bc@jmu.edu}
\author[I. Pfaff]{Isabel Pfaff}
\address{Oberlin College}
\email{ipfaff@oberlin.edu}
\author[J. Plute]{Jenna Plute}
\address{Texas A\&M University}
\email{jplute@tamu.edu}
\keywords{diagonal form, Smith normal form, sandpile}
\begin{document}
\begin{abstract}
For integers $0 \leq \ell \leq k_{r} \leq k_{c} \leq n$, we give a description for the Smith group of the incidence matrix with rows (columns) indexed by the size $k_r$ ($k_c$, respectively) subsets of an $n$-element set, where incidence means intersection in a set of size $\ell$.  This generalizes work of Wilson and Bier from the 1990s which dealt only with the case where incidence meant inclusion.  Our approach also describes the Smith group of any matrix in the $\Z$-linear span of these matrices so includes all integer matrices in the Bose-Mesner algebra of the Johnson association scheme: for example, the association matrices themselves as well as the Laplacian, signless Laplacian, Seidel adjacency matrix, etc. of the associated  graphs.  In particular, we describe the critical (also known as sandpile) groups of these graphs.  The complexity of our formula grows with the parameters $k_{r}$ and $k_{c}$, but is independent of $n$ and $\ell$, which often leads to an efficient algorithm for computing these groups.  We illustrate our techniques to give diagonal forms of matrices attached to the Kneser and Johnson graphs for subsets of size $3$, whose invariants have never before been described, and recover results from a variety of papers in the literature in a unified way.
\end{abstract}

\maketitle

\section{Introduction}

This paper concerns abelian group invariants for a large class of integer matrices.  Given an $m \times n$ integer matrix $M$, one can view the rows as describing zero relations among the generators of a finitely generated abelian group, $S(M)$, known as the {\bf Smith group} of $M$.  To be explicit,
\[
S(M) = \Z^{n} / \rowz(M),
\]
where $\rowz(M)$ denotes all integer-coefficient linear combinations of the rows of $M$.  (In this paper our vectors will be row vectors, to which we will apply our matrices from the right.)  In the case that $M$ is an incidence matrix, this abelian group is really an invariant of the incidence relation, because its isomorphism type is unchanged by any reordering of the rows or columns of $M$.  

It is well-known that a  finitely generated abelian group can be described as a direct sum of cyclic groups.  Such a description of $S(M)$ is equivalent to finding a {\bf diagonal form} of $M$, by which we mean a matrix $D$ of the same dimensions as $M$ (so ``diagonal'' simply means that the $(i,j)$-entry of $D$ is $0$ unless $i=j$) satisfying 
\[ E M F = D, \]
where $E$ and $F$ are unimodular matrices.  Here, a {\bf unimodular} matrix is a square integer matrix that has an integer inverse and in the situation where two integer matrices are related by unimodular transformations, as above, we say the matrices are {\bf integrally equivalent}.  If $r$ is the rank of $M$ (and so of $D$), then we can write
\[
D = \diag(d_{1}, d_{2}, \cdots, d_{r}, 0, \cdots, 0)
\]
and we have 
\[
S(M) \ \cong\  \Z/d_{1}\Z \ \oplus\ \Z/d_{2}\Z \ \oplus\ \cdots \ \oplus\ \Z/d_{r}\Z \ \oplus\ \Z^{n - r}.
\]
It turns out there is exactly one diagonal form of $M$ that has nonnegative diagonal entries, each dividing the next.  This is called the \textbf{Smith normal form} of $M$ and corresponds to the invariant factor decomposition of the Smith group.

One incidence matrix whose Smith group has received considerable study is the {\bf inclusion matrix} of $r$-subsets vs. $s$-subsets.  The study of algebraic invariants of the inclusion matrix goes back to at least the $1960$s \cite{gottlieb}.   In \cite{wilson_classic}, Wilson found an elegant diagonal form for this matrix.  His motivation was to apply this diagonal form to questions concerning the existence of $t$-designs. For coding theorists and others interested in the rank of integer matrices over fields of characteristic $p$ (the $p$-rank, for short), we remark that a diagonal form gives the $p$-rank of the matrix for all primes~$p$.

In this paper we examine a generalization of the inclusion matrix, where incidence can be defined as intersection in any fixed size.

\begin{firstdef}
For nonnegative integers $0 \leq \ell \leq \kr \leq \kc \leq n$, let $\A = \A_{n,\kr,\kc,\ell}$ be the matrix whose rows (columns) correspond to the subsets of $\{1, 2, \ldots, n\}$ having size $\kr$ ($\kc$, respectively), with entries $\A(A, B) = 1$ whenever $|A \cap B| = \ell$ and $\A(A, B) = 0$ otherwise.  
We refer to these as {\bf subset intersection matrices}.
\end{firstdef}

If $\kr = k = \kc$ then $\A$ is the adjacency matrix for the {\bf subset intersection graph} $\G(n,k,\ell)$.  
For example, we obtain the {\bf Kneser graphs}  when $\ell = 0$ and the {\bf Johnson graphs}  when $\ell = k - 1$.
It is straightforward to see that every vertex in $\G(n,k,\ell)$ has the same degree $d := { {n-k} \choose {k-\ell} }{k \choose \ell}$.  

Our main result in this paper is a simple methodology for computing or describing a diagonal form of the {\bf generalized Laplacian} matrix
\[ \L = \A - \lambda \I \]
associated to $\G(n,k,\ell)$, where $\lambda$ is any integer and $\I$ is the ${ n \choose k } \times { n \choose k }$ identity matrix, in a unified way.  Notice that our $\L$ includes the usual graph Laplacian for $\G(n,k,\ell)$ when $\lambda = d$ as well as the adjacency matrix itself (when $\lambda = 0$).  Many earlier partial results regarding these matrices have been obtained for specific values of $\ell$ and $k$; see \cite{line, brouwer:prank, duceysinhill, ds, wilson_classic} for example.  We are able to recover many of these, under one framework, as well as provide new formulas that were not previously known.  

As a byproduct of our development, we also obtain diagonal forms for the $\A_{n,\kr,\kc,\ell}$ matrices even when $\kr$ is not necessarily equal to $\kc$.  This generalizes the main result of \cite{wilson_classic} on subset inclusion matrices (where $\ell = \kr$) to arbitrary intersection parameters $\ell$.  Even more remarkable is that our techniques apply to all integer matrices in the $\Z$-span of the $\left \{ \A_{n,\kr,\kc,\ell}\right \}_{\ell = 0}^{\kr}$.  When $\kr = \kc$, these are the integer matrices in the Bose-Mesner algebra of the {\bf Johnson association scheme}.

Our methodology proceeds in three steps.  In Section~\ref{s:p_matrix}, we conjugate $\L$ by a matrix $\P$ that has previously appeared in work of Bier.  The resulting matrix $\U - \lambda \I$ is upper triangular.  Moreover, $\U$ has a block structure that is independent of the parameters $n$, $\kr$, $\kc$, and $\ell$, in the sense that the zero entries of the blocks remain the same for all parameter choices and the non-zero entries in each block all have the same value (which does, of course, depend on the parameters; see Definition~\ref{d:cf}).  In Section~\ref{s:e_matrix}, we show how to diagonalize each of these blocks simultaneously using a new unimodular matrix construction.  Finally, in Section~\ref{s:diagonalize} we combine the ingredients resulting from these steps to present our unified formula for the diagonal form of $\L$ in Definition~\ref{d:main} and Theorem~\ref{t:main}.  In Section~\ref{s:examples}, we show how this formula specializes in some familiar and new special cases.

\section{First step:  The $\P$-matrix of Bier block-triangularizes $\L$}\label{s:p_matrix}

To simplify our exposition, we assume $n$ is fixed and may refer to a ``subset'' without explicitly identifying the universal set $\{1, 2, \ldots, n\}$ to which it belongs.  Fix the graph parameters $\kk = (\kr, \kc)$ and $\ell$ as well.  Readers interested in the case when $\kr = \kc$ may simply ignore the typographical decorations on $k$ in the formulas and results that follow.

\begin{definition}\label{d:standard}
We say that a set $\beta = \{b_1, b_2, \ldots, b_k\} \subseteq \{1, 2, \ldots, n\}$ is {\bf standard} if $b_i \geq 2i$ for all $i$, where the $b_i$ form an ordered labeling of the elements (so $b_1 < b_2 < \ldots < b_k$).
\end{definition}

As an example, here are the standard subsets of $\{1, 2, 3, 4, 5\}$:
\[
\emptyset, \{2\}, \{3\}, \{4\}, \{5\}, \{2,4\}, \{2,5\}, \{3,4\}, \{3,5\}, \{4,5\}.
\]

This definition and nomenclature come from the combinatorial representation theory of the symmetric group, in which we view a subset as the second row of a two-row tableau (with the complementary entries in the first row, using the ``English'' convention).  Then, tableau are standard when their entries increase along rows and columns.  Standard subsets have also been studied under the names ``$t$-subsets of rank $t$'' \cite{frankl,bier} and ``$\ell$-tags'' \cite{singhi}.  We will try to use greek letters for standard subsets in order to differentiate them.  Also, observe that any subset of a standard subset will be standard as well.

In this section, we frequently encounter two particular classes of subsets:  {\bf unrestricted $k$-subsets} whose size must be exactly $k$, and {\bf standard $(\leq k)$-subsets} that must satisfy Definition~\ref{d:standard} as well as have a size that is no larger than $k$.  It is not too difficult to see that these two classes actually contain the same number of subsets.  In fact, the inclusion matrix between them is unimodular.

\begin{definition}\label{d:p}
Let $\P_k$ be the matrix whose rows are indexed by unrestricted $k$-subsets and whose columns are indexed by standard $(\leq k)$-subsets with entries
\[ \P_k(A, \beta) = \begin{cases} 1 & \text{ if $\beta \subseteq A$ } \\ 0 & \text{ otherwise. } \end{cases} \]
\end{definition}

\begin{theorem}
The matrix $\P_k$ is square and unimodular.
\end{theorem}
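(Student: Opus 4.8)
The plan is to establish both claims — that $\P_k$ is square, and that it is unimodular — by induction on $k$, exploiting a natural block decomposition that separates subsets according to whether they contain the element $n$ (or some other distinguished extremal element). For squareness, I would first check that the number of standard $(\leq k)$-subsets of $\{1,\ldots,n\}$ equals $\binom{n}{k}$. One clean way is a bijective/recursive argument: a standard subset either omits its largest possible slot or not, and the recursion for standard $(\leq k)$-subsets matches Pascal's rule for $\binom{n}{k}$; alternatively one can cite that this is exactly the known enumeration of $t$-subsets of rank $t$. Either way the matrix $\P_k$ is square, so it suffices to show $\det \P_k = \pm 1$.

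For unimodularity, the key step is to find an ordering of the row index set (unrestricted $k$-subsets) and the column index set (standard $(\leq k)$-subsets) that puts $\P_k$ into block upper-triangular form with unimodular diagonal blocks. The natural device: group the $k$-subsets $A$ by the value of $A \cap \{m\}$ for a suitable element $m$, and group the standard subsets $\beta$ the same way. If $\beta \subseteq A$ forces a containment relation among the groups compatible with a partial order, the off-diagonal blocks vanish and the diagonal blocks are (up to relabeling) smaller copies of $\P$-type matrices — in fact each diagonal block should be either $\P_{k-1}$ on a shifted ground set or an identity-type block, letting the induction close. I would set up the base case $k=0$ (the $1\times 1$ matrix $[1]$) and then carry out the inductive step carefully, tracking that the "standard" condition $b_i \geq 2i$ interacts correctly with removing an element: deleting the largest element of a standard set leaves a standard set, and this is precisely what makes one diagonal block a genuine $\P_{k-1}$.

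An alternative, and perhaps slicker, route is to directly exhibit the inverse of $\P_k$ using inclusion–exclusion / Möbius inversion on the containment poset. Since $\P_k(A,\beta) = 1$ iff $\beta \subseteq A$, its inverse, if it exists with integer entries, should have entries given by a signed count weighted by Möbius values of the Boolean lattice, namely $(-1)^{|A|-|\beta|}$ times the number of ways to realize the standardization — but the subtlety is that the column set is not all subsets, only the standard ones, so the clean Möbius formula must be corrected. The cleanest honest approach is therefore the inductive block-triangulation, and I expect the main obstacle to be precisely the bookkeeping that shows the off-diagonal blocks vanish: one must verify that the chosen ordering of groups is compatible with the containment relation $\beta \subseteq A$, i.e., that $\beta$ lying in a "later" group than $A$ genuinely precludes $\beta \subseteq A$. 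Once that compatibility lemma is in place, squareness plus triangularity plus the inductive hypothesis on the diagonal blocks gives $\det \P_k = \pm 1$ immediately.
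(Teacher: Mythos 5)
Your squareness argument is essentially the paper's: the paper counts the standard $s$-subsets by the ballot/hook-length formula as $\binom{n}{s}-\binom{n}{s-1}$ and telescopes to $\binom{n}{k}$, and your Pascal-rule recursion is the same count in recursive form. (Both versions silently require $n$ to be large enough relative to $k$; for $n\le 2k-2$ the ballot formula goes negative and the matrix is in fact not square, but the paper elides this too.) For unimodularity, however, the paper offers no argument at all --- it cites Bier \cite{bier} and \cite[Theorem 8]{resilience} --- so your sketch must stand on its own, and there it has a genuine gap.

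The gap is not where you expect it. The compatibility you flag as the main obstacle is immediate: if $n\in\beta$ and $n\notin A$ then $\beta\not\subseteq A$, so the split on containment of $n$ does give a block upper-triangular form. The real problem is identifying the diagonal blocks and closing the induction. The block with rows $\{A : n\in A\}$ and columns $\{\beta : n\in\beta\}$ has rows in bijection with $(k-1)$-subsets of $\{1,\ldots,n-1\}$, but its columns correspond only to those standard $(\leq k-1)$-subsets $\beta'$ of $\{1,\ldots,n-1\}$ for which $\beta'\cup\{n\}$ is still standard, i.e.\ $n\ge 2(|\beta'|+1)$; this is all of them only when $n\ge 2k$. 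Since your induction fixes $k$ and decreases $n$, it inevitably reaches the range $n-1\le 2k-2$, where the diagonal blocks stop being square (for the ground set $\{1,\ldots,2k-2\}$ the lower block has $\binom{2k-2}{k}$ rows but $\binom{2k-2}{k-1}$ columns) and the determinant no longer factors as a product over diagonal blocks. So the recursion does not reduce to smaller instances of the same statement: you must either treat the boundary cases near $n=2k-1$ by a separate argument or use a different decomposition, which is essentially why the published proofs take other routes. Your observation that the naive M\"obius-inversion shortcut fails because the columns range only over standard subsets is correct.
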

\begin{proof}
By the hook-length formula, the number of standard $s$-subsets is $\binom{n}{s} - \binom{n}{s-1}$.  Thus $\P_k$ has $\sum_{s=0}^{k} \binom{n}{s} - \binom{n}{s-1} = \binom{n}{k}$ columns, and so is square.  That $\P_k$ is unimodular (and can be used to obtain a diagonal form for the unrestricted subset-inclusion matrices) was first discovered by Bier \cite{bier}.  See also \cite[Theorem 8]{resilience} for an alternative proof of this fact.
\end{proof}

We now describe the parameters that appear in our upper triangular block form for $\L$.  These seem related (though are not precisely equal to) Eberlein orthogonal polynomials that arise as eigenvalues in the Johnson association scheme.

\begin{definition}\label{d:cf}
    Let 
    \[ c_i = c_i(j; n, \kk, \ell) = { \kr-i \choose \ell-i}{n-\kr-j+i \choose \kc-\ell-j+i} \]
    and define
    \[ f_i = f_i(j; n, \kk, \ell) = \sum_{v=0}^i (-1)^{i+v} {i \choose v} c_v(j; n, \kk, \ell). \]
\end{definition}

\begin{lemma}\label{l:ap_entries}
Let $\A = \A_{n,\kk,\ell}$ be the subset intersection matrix and $\P = \P_{\kc}$ be the Bier
matrix as in Definition~\ref{d:p}.  Suppose that $A$ is an unrestricted
$\kr$-subset and that $\beta$ is a standard $(\leq \kc)$-subset.  Then, the $(A,
\beta)$ entry of the product $\A\P$ is given by $c_a(b; n, \kk, \ell)$ where $a = |A
\cap \beta|$ and $b = |\beta|$.
\end{lemma}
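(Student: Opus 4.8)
The plan is to compute the $(A,\beta)$ entry of $\A\P$ directly from the definitions by a double-counting argument. Writing out the matrix product, we have
\[
(\A\P)(A,\beta) = \sum_{C} \A(A,C)\,\P(C,\beta),
\]
where $C$ ranges over all unrestricted $\kc$-subsets. By Definition~\ref{d:p}, $\P(C,\beta)$ is $1$ exactly when $\beta \subseteq C$ and $0$ otherwise, and by the definition of $\A$, the term $\A(A,C)$ is $1$ exactly when $|A \cap C| = \ell$. Hence the sum counts the number of $\kc$-subsets $C$ that simultaneously contain $\beta$ and meet $A$ in exactly $\ell$ points.

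The heart of the argument is to count these $C$'s. First I would record the contributions forced by $\beta \subseteq C$: since $\beta$ is contained in $C$, the $a := |A \cap \beta|$ elements of $A \cap \beta$ are already in $C$, so they already contribute $a$ to $|A \cap C|$. To reach $|A \cap C| = \ell$ we must choose exactly $\ell - a$ further elements of $C$ from $A \setminus \beta$, and that set has size $\kr - a$; this gives the factor $\binom{\kr - a}{\ell - a}$. The remaining elements of $C$ — there are $\kc - b - (\ell - a)$ of them, where $b = |\beta|$ — must be chosen from the ground set avoiding both $A$ and $\beta$, i.e.\ from $\{1,\dots,n\} \setminus (A \cup \beta)$. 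Since $|A \cup \beta| = \kr + b - a$, that set has size $n - \kr - b + a$, contributing $\binom{n - \kr - b + a}{\kc - \ell - b + a}$. Multiplying the two independent choices yields
\[
(\A\P)(A,\beta) = \binom{\kr - a}{\ell - a}\binom{n - \kr - b + a}{\kc - \ell - b + a} = c_a(b; n, \kk, \ell),
\]
matching Definition~\ref{d:cf}, with the understanding that a binomial coefficient with a negative lower index or a lower index exceeding the upper index is zero (which correctly encodes the cases where no valid $C$ exists).

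The only real subtlety is bookkeeping with the edge cases: one must check that the binomial-coefficient conventions make the formula valid even when $\ell < a$, when $\kc - \ell - b + a < 0$, or when the available pools are too small, so that the combinatorial count of $0$ agrees with the product formula. I expect this to be the main (though minor) obstacle; everything else is a clean inclusion–exclusion–free count of the two disjoint blocks of $C \setminus \beta$. I would present the computation as the single displayed chain above, preceded by the sentence identifying the sum as a count of $\kc$-subsets $C \supseteq \beta$ with $|A \cap C| = \ell$.
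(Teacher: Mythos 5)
Your proposal is correct and follows essentially the same argument as the paper: both identify the $(A,\beta)$ entry as the number of $\kc$-subsets containing $\beta$ and meeting $A$ in exactly $\ell$ points, then count by choosing $\ell-a$ elements from $A\setminus\beta$ and the rest from the complement of $A\cup\beta$, yielding $\binom{\kr-a}{\ell-a}\binom{n-\kr-b+a}{\kc-\ell-b+a}$. The paper presents the same two-block count with a schematic figure rather than your prose bookkeeping, but the substance is identical.
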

\begin{proof}
We compute the dot product of row $A$ from $\A$ with column $\beta$ from $\P$.  The entries of these vectors are all zero or one, and the dot product is a summation over unrestricted $\kc$-sized subsets $T$.  Such a subset $T$ contributes one to the dot product precisely when:
\begin{itemize}
    \item $T$ intersects $A$ in a size $\ell$ subset (by the row relation from $\A$), and
    \item $T$ contains $\beta$ (by the column relation from $\P$).
\end{itemize}
Thus, the $(A, \beta)$ entry of $\A \P$ is just the number of these subsets $T$.  Let $\alpha = A \cap \beta$ with size $a := |A \cap \beta|$.  

\begin{figure}
\centering
    \begin{tikzpicture}
    \node[rectangle, minimum width = 1cm, minimum height = 0.65cm, fill=lightgray] (h) at (-0.5,0) {};
    \node[rectangle,draw, minimum width = 2cm, minimum height = 0.65cm] (a) at (0,0) {$\beta$};     
    \node[rectangle,draw, minimum width = 1cm, minimum height = 0.65cm, fill=lightgray] (b) at (1.5,0) {};
    \node[rectangle,draw, minimum width = 2.2cm, minimum height = 0.65cm] (c) at (3.1,0) {};
    \draw[decoration={brace,amplitude=2mm,mirror,raise=1mm},decorate] (b.south west) -- (b.south east);
    \draw[decoration={brace,amplitude=2mm,mirror,raise=1mm},decorate] (c.south west) -- (c.south east);
    \draw[decoration={brace,amplitude=2mm,raise=1mm},decorate] (b.north west) -- (b.north east);
    \node[above=2mm of b] (textnode) {$\ell-a$};
    \draw[decoration={brace,amplitude=2mm,raise=4mm},decorate] (a.west) -- (h.east);
    \node[above=3mm of h] (textnode) {$a$};
    \node[below=5mm of b] (comb1) {$\binom{\kr-a}{\ell-a}$};
    \node[below=5mm of c] (comb2) {$\binom{n-b-\kr+a}{\kc-b-\ell+a}$};
    \node[rectangle, minimum width = 2cm, minimum height = 0.65cm, fill=lightgray] (g) at (7.25,0) {};
    \node[rectangle, draw, minimum width = 1.0cm, minimum height = 0.65cm] (d) at (6.75,0) {$\alpha$};
    \draw[decoration={brace,amplitude=2.0mm,raise=1mm},decorate] (d.north west) -- (d.north east);
    \node[above=4mm of d] (textnode) {$a$};
    \node[rectangle,draw, minimum width = 3.5cm, minimum height = 0.65cm] (e) at (9.0,0) {};
    \draw[decoration={brace,amplitude=2.0mm,raise=1mm},decorate] (d.north east) -- (e.north east);
    \node[above=4mm of e] (textnode) {$\kr-a$};
    \node[left=2mm of h] (comb1) {$T = $};
    \node[left=2mm of g] (comb1) {$A = $};
\end{tikzpicture}
\caption{Structure of the $T$ subsets}\label{f:t_structure}
\end{figure}

Next, consider Figure~\ref{f:t_structure} where the left side represents a schematic for our possible $T$s and the right side represents the fixed $\kr$-sized set $A$. The shaded section represents the intersection between the two subsets which must have a total length of $\ell$.

Assessing these constraints, we find that $T$ is determined by the choice of entries in two blocks.  For the first block, we must choose the $\ell-a$ additional elements to lie in the intersection $A \cap T$.  These $\ell-a$ elements must be chosen from the $\kr-a$ elements of $A$ that are not already part of the intersection $\alpha$.  Thus, we have $\binom{\kr-a}{\ell-a}$ choices for the first block.

Then, we must choose a second block of $\kc-b-(\ell-a)$ elements for $T$ that do not intersect $A$ nor $\beta$.  It suffices to select elements that are disjoint from $\beta$ and from $A \setminus \alpha$, and these two sets are themselves disjoint, so there are $n-b-(\kr-a)$ possible elements to choose from.  Thus, we have $\binom{n-b-\kr+a}{\kc-b-\ell+a}$ ways to choose the second block, which yields the desired formula.
\end{proof}

We are now in a position to describe the upper triangular block form for $\L$.  

\begin{definition}
For $0 \leq i < j$, let $\W_{i,j}$ be the matrix whose rows are indexed by standard subsets of size $i$ and whose columns are indexed by standard subsets of size $j$ with entries $\W_{i,j}(\alpha, \beta) = 1$ whenever $\alpha \subseteq \beta$, and $\W_{i,j}(\alpha, \beta) = 0$ otherwise.  This same definition extends to declare that $\W_{i,j}$ is the identity matrix for $i = j$ and is the zero matrix when $i > j$.
\end{definition}

\begin{theorem}\label{t:p_matrix}
The matrix $\left(\P_{\kr}\right)^{-1} \A \P_{\kc}$ decomposes into a block matrix, where the $(i, j)$th block, for $(0, 0) \leq (i, j) \leq (\kr, \kc)$, is equal to $f_i(j) \ \W_{i,j}$.  Here, the $f_i(j)$ are as in Definition~\ref{d:cf}.

Consequently, in the case when $\kr = k = \kc$, the matrix $\P_k^{-1} \L \P_{k}$ has a decomposition into a $(k+1) \times (k+1)$ block matrix, where the $(i, j)$th block is equal to:
\begin{itemize}
    \item $f_{i}(j) \ \W_{i,j}$ for $i < j$,
    \item $\left(f_{i}(i) - \lambda\right) \ \W_{i,i}$ for $i = j$,
    \item the appropriately-sized zero matrix, otherwise.
\end{itemize}
Since the $\W_{i,i}$ are diagonal, we have that $\P_k^{-1} \L \P_k$ is an upper triangular matrix.
\end{theorem}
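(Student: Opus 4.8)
The heart of the statement is the first claim: that $(\P_{\kr})^{-1}\A\,\P_{\kc}$ decomposes into blocks $f_i(j)\,\W_{i,j}$, indexed by sizes $i$ of standard row-subsets and $j$ of standard column-subsets. I would first establish the companion identity $\A\,\P_{\kc} = \P_{\kr}\, B$, where $B$ is the block matrix with $(i,j)$-block $f_i(j)\,\W_{i,j}$; then left-multiplying by $(\P_{\kr})^{-1}$ (which exists by the preceding Theorem) gives the result. So the real work is to compute $(\P_{\kr}\,B)(A,\beta)$ for an unrestricted $\kr$-subset $A$ and a standard $(\leq\kc)$-subset $\beta$, and check it equals $c_a(b)$ with $a=|A\cap\beta|$, $b=|\beta|$, matching Lemma~\ref{l:ap_entries}.

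\textbf{Key steps.} Expanding $(\P_{\kr}\,B)(A,\beta) = \sum_{\alpha} \P_{\kr}(A,\alpha)\,B(\alpha,\beta)$, the sum runs over standard subsets $\alpha$; the factor $\P_{\kr}(A,\alpha)$ forces $\alpha\subseteq A$, and the block structure of $B$ forces $\alpha$ to have size $i$ and $\beta$ to have size $j=b$, contributing $f_i(b)\,\W_{i,b}(\alpha,\beta)$, which is nonzero only when $\alpha\subseteq\beta$. Hence $\alpha$ ranges over standard subsets of $A\cap\beta$, and for a standard $\alpha$ of size $i$ inside $\alpha_0 := A\cap\beta$ (size $a$) the contribution is $f_i(b)$. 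The number of standard $i$-subsets contained in a \emph{fixed} $a$-set that is itself standard is $\binom{a}{i}$ — wait, that overcounts; the correct combinatorial input is that \emph{every} $i$-subset of $\alpha_0$ is standard since any subset of a standard set is standard (noted after Definition~\ref{d:standard}), and $\alpha_0\subseteq\beta$ with $\beta$ standard, so there are exactly $\binom{a}{i}$ such $\alpha$. Therefore $(\P_{\kr}\,B)(A,\beta) = \sum_{i=0}^{a}\binom{a}{i}f_i(b)$. The final step is the binomial-inversion identity: since $f_i = \sum_{v=0}^i (-1)^{i+v}\binom{i}{v}c_v$ by Definition~\ref{d:cf}, the standard inversion formula gives $\sum_{i=0}^{a}\binom{a}{i}f_i(b) = c_a(b)$, exactly the entry of $\A\,\P_{\kc}$ from Lemma~\ref{l:ap_entries}. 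This proves $\A\,\P_{\kc}=\P_{\kr}\,B$ entrywise, hence the block decomposition of $(\P_{\kr})^{-1}\A\,\P_{\kc}$.

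\textbf{The $\kr=\kc$ consequence.} Specializing $\kr=\kc=k$ makes $B$ a square $(k+1)\times(k+1)$ block matrix that is block-upper-triangular, with diagonal blocks $f_i(i)\,\W_{i,i}$, and each $\W_{i,i}=\I$. Then $\P_k^{-1}\L\,\P_k = \P_k^{-1}(\A-\lambda\I)\P_k = B - \lambda\I$ simply subtracts $\lambda$ from every diagonal block, giving $(f_i(i)-\lambda)\,\W_{i,i}$ on the diagonal and leaving the off-diagonal blocks $f_i(j)\,\W_{i,j}$ (for $i<j$) untouched and the below-diagonal blocks zero. Since each $\W_{i,i}$ is a diagonal (indeed identity) matrix and the whole thing is block-upper-triangular, $\P_k^{-1}\L\,\P_k$ is upper triangular.

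\textbf{Main obstacle.} The only genuinely delicate point is the counting step: verifying that the standard $i$-subsets $\alpha$ appearing in the sum are exactly the $i$-subsets of $A\cap\beta$ — this requires invoking both that subsets of standard sets are standard (so any $i$-subset of the standard set $\beta$ qualifies, and in particular those inside $A\cap\beta$) and that the $\P_{\kr}$ factor contributes precisely when $\alpha\subseteq A$. Getting the interplay of these two containment conditions right, so that the index set is cleanly $\binom{[a]}{i}$ with no over- or under-counting, is where care is needed; after that the binomial inversion and the $\lambda$-bookkeeping are routine.
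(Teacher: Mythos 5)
Your proposal is correct and follows essentially the same route as the paper: prove $\A\P_{\kc}=\P_{\kr}B$ entrywise by reducing the $(A,\beta)$-entry of $\P_{\kr}B$ to $\sum_{i=0}^{a}\binom{a}{i}f_i(b)$ (using that every subset of the standard set $\beta$ is standard) and then applying binomial inversion to recover $c_a(b)$ from Lemma~\ref{l:ap_entries}. The only difference is that the paper carries out the inversion identity explicitly rather than citing it, and your $\lambda$-bookkeeping for the $\kr=\kc$ case matches the paper's.
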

\begin{proof}
Define a block triangular matrix $\U$ from the $f_i(j) \ \W_{i,j}$, for $(0, 0) \leq (i, j) \leq (\kr, \kc)$.  We 
seek to show that $\A \P_{\kc} = \P_{\kr} \U$.  Then, it follows that 
\[ (\A - \lambda \I) \P_k = \A \P_k - \lambda \P_k = \P_k \U - \lambda \P_k = \P_k (\U - \lambda \I) \]
in the case $\kr = k = \kc$ so $\P_k^{-1} \L \P_k = (\U - \lambda \I)$ as in the statement.

To this end, fix an unrestricted $\kr$-subset $A$ and a standard $(\leq \kc)$-subset $\beta$.
By Lemma~\ref{l:ap_entries}, we know the $(A, \beta)$-entry of $\A \P_{\kc}$
explicitly.  Next, consider the $(A, \beta)$-entry of $\P_{\kr} \U$.
As in the previous proof, we compute the dot product of row $A$ from $\P_{\kr}$ with column $\beta$ from $\U$.  This dot product is a summation over standard $(\leq \kr)$-subsets $\sigma$.  Such a subset $\sigma$ contributes $f_i(j)$ to the dot product precisely when:
\begin{itemize}
    \item $\sigma$ is a subset of $A$ (by the row relation from $\P_{\kr}$), and
    \item $\sigma$ is a subset of $\beta$ (by the relations from the blocks $\W_{i,j}$ encountered in a fixed column of $\U$).
\end{itemize}

It is straightforward to check that the condition of being standard from Definition~\ref{d:standard} is closed under taking subsets, so $A \cap \beta$ is standard, and we sum over the subsets $\sigma$ of $A \cap \beta$.  Breaking on the size $w$ of $\sigma$, we therefore find that the $(A, \beta)$ entry of $\P_{\kr} \U$ can be written 
\[ \sum_{w = 0}^a {a \choose w} f_w(b) \]
where $a = |A \cap \beta|$ and $b = |\beta|$.  Expanding $f_w(b)$ via Definition~\ref{d:cf}, we obtain
\[ \sum_{w = 0}^a {a \choose w} \sum_{v=0}^w (-1)^{w+v} {w \choose v} c_v(b; n, \kk, \ell) \]
and interchanging (finite) summations yields
\[ = \sum_{v = 0}^a c_v(b; n, \kk, \ell) \left(\sum_{w=v}^a (-1)^{w+v} {a \choose w}  {w \choose v}\right). \]
For each fixed value of $v$, we have
$\sum_{w=v}^a (-1)^{w+v} {a \choose w}  {w \choose v} = \frac{(-1)^v}{v!} \sum_{w = v}^a (-1)^{w} \frac{a!}{(a-w)!(w-v)!}$.
When $v = a$, this summation is $1$.  To evaluate for $0 \leq v < a$, change variables to $u = w-v$, obtaining
$\frac{(-1)^v}{v!} \sum_{u = 0}^{a-v} (-1)^{u+v} \frac{a!}{(a-v-u)!(u)!} = {a \choose v} \sum_{u = 0}^{a-v} (-1)^{u} {a-v \choose u}$,
which is simply ${a \choose v} (1-1)^{a-v} = 0$.

Thus, our $(A,\beta)$-entry of $\P_{\kr}\U$ is equal to $c_a(b; n, \kk, \ell)$, which matches the $(A, \beta)$-entry of $\A\P_{\kc}$ by Lemma~\ref{l:ap_entries}, as desired.
\end{proof}

\section{Second step:  The $\E$-matrix diagonalizes each block in the triangular form}\label{s:e_matrix}

\subsection{Some historical context}

In order to proceed, we simultaneously diagonalize all of the $\W_{i,j}$ blocks that appeared after conjugating $\A$ by $\P$ in Section~\ref{s:p_matrix}. 

In \cite{wilson_classic}, Wilson found a diagonal form for the inclusion matrix, $\W^{\mathsf{classic}}_{i,j}$ say, of {\em unrestricted} $i$-subsets versus {\em unrestricted} $j$-subsets using a rather subtle induction argument.  Remarkably, the same diagonal form, with only adjustments to the multiplicities of the entries, serves as a diagonal form for our inclusion matrix $\W_{i,j}$ of {\em standard} $i$-subsets versus {\em standard} $j$-subsets.

Bier \cite{bier}, building upon work of Frankl \cite{frankl}, observed that one could bypass the induction by constructing suitable unimodular change-of-basis matrices directly.  In fact, the principal player has already been introduced in our narrative.  The $\P_i$ matrix from Definition~\ref{d:p} is capable of changing basis from unrestricted $i$-subsets to standard $(\leq i)$-subsets in essentially two different ways:  in this paper, we conjugate $\A$ by $\P_i$ to obtain a triangular form; but Bier introduced and applied $\P_i^{\mathsf{transpose}}$ on the left and $\left(\P_j^{\mathsf{transpose}}\right)^{-1}$ on the right of $\W^{\mathsf{classic}}_{i,j}$ to obtain a diagonal form (implicitly indexed now by {\em standard} subsets).  Subsequently, this matrix found application in the calculation of the Smith group of the $n$-cube graph \cite{hypercube} and in the proof of the resilience of rank of the inclusion matrices \cite{resilience}.

Thus, it is tempting to imagine that we could diagonalize our $\W_{i,j}$ matrices by a similarly explicit unimodular change-of-basis, and give a combinatorial description of the $E_s$ matrices that we construct in this section inductively.  It emerges that one should define some {\em super}-standard subset of the standard objects, and build the inclusion matrix between the standard $i$-subsets and the {\em super}-standard $(\leq i)$-subsets.  To carry this out rigorously, one would need to (a) define precisely which subsets satisfy the {\em super}-standard condition, and (b) prove that the associated inclusion matrix is actually unimodular.  We did not initially pursue this as a method of proof, but see the Appendix for results in this direction.

\subsection{The index of an integer matrix}

Given an integer matrix $M$, let $\rowz(M)$ denote the set of $\Z$-linear combinations of the rows of $M$, let $\rowq(M)$ denote the set of $\Q$-linear combinations of the rows of $M$, and let $Z(M)$ denote the set of integer vectors in $\rowq(M)$.  We have
\[
\rowz(M) \subseteq Z(M) \subseteq \rowq(M).
\]
The {\bf index} of $M$ is the index of $\rowz(M)$ as a subgroup of $Z(M)$, denoted $[Z(M) : \rowz(M)]$.  

\begin{example}
    Let $\vec{x_{1}}, \vec{x_{2}}, \vec{x_{3}}$ be a basis for $\Z^{3}$ and let $\vec{y_{1}}, \vec{y_{2}},\vec{y_{3}},\vec{y_{4}}$ be a basis for $\Z^{4}$.  
    
    Consider the homomorphism of free $\Z$-modules $\Z^{3} \to \Z^{4}$ defined by
    \begin{align*}
    \vec{x_{1}} &\mapsto 2\vec{y_{1}}\\
    \vec{x_{2}} &\mapsto 3\vec{y_{2}}\\
    \vec{x_{3}} &\mapsto 0,
    \end{align*}
    and which, with respect to these bases, has matrix
    \[M =
    \begin{bmatrix}
        2 & 0 & 0 & 0\\
        0 & 3 & 0 & 0\\
        0 & 0 & 0 & 0
    \end{bmatrix}.
    \]
    (Recall we are writing our vectors as row vectors and applying our matrices on the right.)

    Then 
    \begin{align*}
        \rowz(M) &= \{[2a,3b,0,0] \, \vert \, a,b \in \Z\} \cong 2\Z \vec{y_{1}} \oplus 3\Z \vec{y_{2}}\\
        Z(M) &= \{[a,b,0,0] \, \vert \, a,b \in \Z\} \cong \Z \vec{y_{1}} \oplus \Z \vec{y_{2}}\\
        \rowq(M) &= \{[u,v,0,0] \, \vert \, u,v \in \Q\}\\
        Z(M) / \rowz(M) &\cong \Z/2\Z \oplus \Z/3\Z,
    \end{align*}
    and the index of $M$ is $6$ (the product of the nonzero entries of the diagonal form).
   
    This quotient $Z(M) / \rowz(M)$ is the torsion subgroup of the full Smith group 
    \[
    S(M) \cong \Z/2\Z \oplus \Z/3\Z \oplus \Z^{2}.
    \]   
\end{example}

The above example shows that the index of a matrix $M$ is a useful metric when trying to unravel the Smith group, as it is always the product of positive entries in any nonnegative diagonal form for $M$.  Matrices of {\em index $\mathit 1$} play a special role.  These matrices have the property that any integer vector in the rational span of the rows $\rowq(M)$ is already in the integer span of the rows $\rowz(M)$.  A unimodular matrix is necessarily of index $1$.  Conversely, when $M$ has index $1$ then the entries in any nonnegative diagonal form of $M$ consist of $1$s and $0$s, so an index $1$ matrix may fail to be unimodular only if it is not square or does not have full rank.  We will later use the well-known fact that a full-rank matrix of index $1$ can always be enlarged to a square unimodular matrix \cite[Proposition 2]{wilson_classic}.

\subsection{Characterizing the index of $\W_{i,j}$}

We begin our analysis of $\W_{i,j}$ with a simple but fundamental result.

\begin{lemma}\label{l:fund}  For $0 \leq s \leq i \leq j$, we have
\begin{equation*}
\W_{s,i}\W_{i,j} = \binom{j-s}{i-s}\W_{s,j}.
\end{equation*}
Consequently, $\rowq(\W_{s,j}) \subseteq \rowq(\W_{i,j})$.
\end{lemma}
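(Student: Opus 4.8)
The plan is to verify the matrix identity entrywise and then derive the span containment as an immediate corollary. Fix a standard $s$-subset $\sigma$ indexing a row of $\W_{s,i}$ and a standard $j$-subset $\beta$ indexing a column of $\W_{i,j}$. The $(\sigma,\beta)$-entry of the product $\W_{s,i}\W_{i,j}$ is $\sum_{\alpha} \W_{s,i}(\sigma,\alpha)\,\W_{i,j}(\alpha,\beta)$, where $\alpha$ ranges over standard $i$-subsets. By the definition of the $\W$-matrices, the summand is $1$ exactly when $\sigma \subseteq \alpha \subseteq \beta$ (and $0$ otherwise), so the entry counts the number of standard $i$-subsets $\alpha$ that are sandwiched between $\sigma$ and $\beta$.

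The key observation is that if $\sigma \subseteq \alpha \subseteq \beta$ with $\beta$ standard, then $\alpha$ is automatically standard, since (as already noted in the excerpt) the standard condition of Definition~\ref{d:standard} is closed under taking subsets. Hence the count is unconstrained by standardness: we are simply choosing the $i - s$ elements of $\alpha \setminus \sigma$ freely from the $j - s$ elements of $\beta \setminus \sigma$, giving $\binom{j-s}{i-s}$ such $\alpha$. This matches the $(\sigma,\beta)$-entry of $\binom{j-s}{i-s}\W_{s,j}$ precisely when $\sigma \subseteq \beta$; and when $\sigma \not\subseteq \beta$, both sides are $0$ (on the left, no $\alpha$ can satisfy $\sigma \subseteq \alpha \subseteq \beta$; on the right, $\W_{s,j}(\sigma,\beta) = 0$). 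The degenerate cases $s = i$ or $i = j$, where one of the factors is an identity matrix, are consistent with the convention and with $\binom{j-s}{0} = \binom{j-s}{j-s} = 1$, so the identity holds uniformly.

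For the consequence, note that over $\Q$ the binomial coefficient $\binom{j-s}{i-s}$ is a nonzero scalar (here $i \le j$ and $s \le i$ force $0 \le i - s \le j - s$). Thus $\W_{s,j} = \binom{j-s}{i-s}^{-1}\,\W_{s,i}\W_{i,j}$ expresses every row of $\W_{s,j}$ as a rational linear combination of the rows of $\W_{i,j}$ (left multiplication by $\binom{j-s}{i-s}^{-1}\W_{s,i}$ acts on rows), whence $\rowq(\W_{s,j}) \subseteq \rowq(\W_{i,j})$.

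The only point requiring any care is the closure of the standard condition under subsets, which controls whether the intermediate sets $\alpha$ in the sum are legitimate indices; everything else is a routine binomial count. I do not anticipate a genuine obstacle here.
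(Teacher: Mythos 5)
Your proof is correct and follows the same route as the paper's: compute the $(\sigma,\beta)$-entry as a count of intermediate $i$-subsets, use the fact that standard subsets form an order ideal (closure under taking subsets) to reduce to an unrestricted binomial count, and conclude. The explicit treatment of the $\sigma\not\subseteq\beta$ case and of the $\rowq$ containment is a slight elaboration of what the paper leaves implicit, but there is no substantive difference.
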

\begin{proof}
This is true since the $(X,Y)$-entry of the matrix product counts the number of standard $i$-subsets contained in the standard $j$-subset $Y$, and containing the standard $s$-subset $X$.  The standard subsets form an order ideal in the lattice of unrestricted subsets, partially ordered by inclusion; that is, any subset of a standard subset is itself standard.  Therefore we are just counting the number of unrestricted $i$-subsets with this condition.  This number is $\binom{j-s}{i-s}$ if $X \subseteq Y$, and is $0$ otherwise.
\end{proof}

We would like to characterize the integers that can appear in a diagonal form for $\W_{i,j}$ using index computations.  The most obvious recursion for $\W$ comes from reordering rows and columns so that we obtain a $2 \times 2$ block form based on whether a given subset contains $n$ or not.  However, this recursion runs into ``initial conditions'' that are no simpler than the general case.  To get around this difficulty, we employ a stacked matrix argument.  

\begin{definition}\label{d:stack}
For $0 \leq i \leq j$, let
\[ M_{i,j} = \bigcup_{s=0}^{i}\W_{s,j} \]
where the union denotes the matrix whose rows are the (multiset) union of the rows of the given $\W$ matrices.  We generally refer to this type of construction by saying that $M$ is a {\bf stacked} matrix.
\end{definition}

Since $\W_{i,j}$ can be found at the bottom of the matrix $M_{i,j}$, we clearly have $\rowq(\W_{i,j}) \subseteq \rowq(M_{i,j})$.  Lemma~\ref{l:fund} implies that $\rowq(M_{i,j}) \subseteq \rowq(\W_{i,j})$.  Therefore $\rowq(M_{i,j}) = \rowq(\W_{i,j})$ and the matrices $M_{i,j}$ and $\W_{i,j}$ have the same rank over $\Q$, which by dimensions is at most 
\[ \mu_{i}(n) := \binom{n}{i} - \binom{n}{i-1}, \]
the number of standard $i$-subsets of $\{1, 2, \ldots, n\}$.  (Observe that this formula only works when $n \geq 2i-1$ as there are zero standard $i$-subsets if $n < 2i$ and the formula happens to give the correct answer at $n = 2i-1$.)  

The proofs of the next two lemmas, modulo some minor technical modifications, come from ideas found in Wilson's original proof of the diagonal form for the unrestricted inclusion matrices \cite{wilson_classic}.

\begin{lemma}\label{l:mind1}
Let $0 \leq i \leq j \leq \frac{n-i}{2}$.  Then, $M_{i,j}$ has index $1$ and rank $\mu_i$.
\end{lemma}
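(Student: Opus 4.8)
The plan is to follow Wilson and argue by induction, the recursion coming from deleting the largest ground-set element $n$.

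The first step is to record the block form this deletion produces. Order the standard subsets indexing the rows of $M_{i,j}$, and separately those indexing the columns, so that the subsets avoiding $n$ precede those containing $n$. A standard subset avoiding $n$ is exactly a standard subset of $\{1,\dots,n-1\}$, while a standard subset containing $n$ is $\sigma\cup\{n\}$ for a unique standard subset $\sigma$ of $\{1,\dots,n-1\}$ of one smaller size (here one uses $n\ge 2j$, which is forced by $j\le\tfrac{n-i}{2}$). Reading the entries of $M_{i,j}$ off Definition~\ref{d:stack}, these row and column permutations show that $M_{i,j}$ is integrally equivalent to
\[ \begin{bmatrix} M'_{i,j} & M'_{i,j-1} \\ 0 & M'_{i-1,j-1} \end{bmatrix}, \]
where a prime denotes the corresponding stacked matrix built over the ground set $\{1,\dots,n-1\}$. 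A further unimodular row operation --- for each standard $\sigma$ of size at most $i-1$, subtract the row indexed by $\sigma$ from the row indexed by $\sigma\cup\{n\}$, then reorder --- converts this into the refined form
\[ \begin{bmatrix} M'_{i-1,j} & 0 \\ 0 & M'_{i-1,j-1} \\ \W_{i,j} & \W_{i,j-1} \end{bmatrix} \]
(the $\W$-blocks also taken over $\{1,\dots,n-1\}$), in which the only blocks carrying the index $i$ are $\W$-blocks; I will use this form for the boundary case below.

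The second step extracts the rank and index from the first block form, using two elementary facts: over any field, $\operatorname{rank}\!\left[\begin{smallmatrix}A & B\\ 0 & C\end{smallmatrix}\right]\ge\operatorname{rank}(A)+\operatorname{rank}(C)$; and an integer matrix has index $1$ exactly when its rank over $\F_p$ agrees with its rank over $\Q$ for every prime $p$ (these counting, respectively, the invariant factors prime to $p$ and the nonzero invariant factors). Granting the lemma inductively for the smaller matrices $M'_{i,j}$ and $M'_{i-1,j-1}$ --- for which one needs $n-1\ge i+2j$ --- their $\F_p$-rank equals their $\Q$-rank, namely $\mu_i(n-1)$ and $\mu_{i-1}(n-1)$; with the Pascal-type identity $\mu_i(n)=\mu_i(n-1)+\mu_{i-1}(n-1)$, the first fact then gives $\operatorname{rank}_\Q(M_{i,j})\ge\mu_i(n)$ and $\operatorname{rank}_{\F_p}(M_{i,j})\ge\mu_i(n)$ for every $p$. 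Since also $\operatorname{rank}_{\F_p}(M_{i,j})\le\operatorname{rank}_\Q(M_{i,j})\le\mu_i(n)$ (the last inequality being the dimension bound recorded just before the lemma), all these quantities equal $\mu_i(n)$, so $M_{i,j}$ has rank $\mu_i$ and index $1$. The base cases $i=0$ and $i=j$ are immediate: $M_{0,j}$ is a single all-ones row, and $M_{j,j}$ contains the identity matrix $\W_{j,j}$ among its rows, so that $\rowz(M_{j,j})=\Z^{\mu_j}$.

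The step I expect to be the main obstacle is the passage from $M'_{i,j}$ back up to $M_{i,j}$ in the second step, since it needs $n-1\ge i+2j$; this fails on the boundary $n=i+2j$ with $0<i<j$, and there is no slack to spare --- $M_{i,j}$ already drops below rank $\mu_i$ at $n=i+2j-1$. So the boundary case must be treated separately, and it is the heart of the argument. Here I would turn to the refined block form: its $M$-blocks $M'_{i-1,j}$ and $M'_{i-1,j-1}$ carry only the index $i-1$ and are therefore supplied by an outer induction on $i$, leaving the $\W_{i,j}$- and $\W_{i,j-1}$-rows to be controlled directly --- for instance by applying the deletion recursion once more to these $\W$-blocks and carefully bookkeeping the smaller $\W$-blocks that spin off. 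A more hands-on alternative is to exhibit, for $n=i+2j$, a set of $\mu_i$ rows of $M_{i,j}$ (mixing several of the $\W_{s,j}$) whose $\Z$-span is all of $\rowz(M_{i,j})$ and which contain a $\mu_i\times\mu_i$ submatrix of determinant $\pm1$: such rows extend to a $\Z$-basis of the ambient lattice, which gives both conclusions simultaneously. Either way, this boundary analysis is the technically delicate part; granting it, the induction of the second step closes and the lemma follows.
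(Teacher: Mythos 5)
Your setup is the paper's: induct on $n$ by deleting the element $n$, and in fact your ``refined form''
\[
\begin{bmatrix} M'_{i-1,j} & 0 \\ 0 & M'_{i-1,j-1} \\ \W_{i,j} & \W_{i,j-1} \end{bmatrix}
\]
is, up to permuting block rows and columns, exactly the matrix the paper arrives at. The genuine gap is that you then abandon this form and run the induction on the \emph{unrefined} decomposition, whose diagonal block $M_{i,j}(n-1)$ needs $i+2j\le n-1$; you correctly flag that this fails when $n=i+2j$, and you leave that boundary case unproved, offering only two unexecuted sketches (``applying the deletion recursion once more \dots and carefully bookkeeping,'' or exhibiting $\mu_i$ unimodularly extendable rows). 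Since the boundary triples $(i,j,\,i+2j)$ lie squarely inside the lemma's hypothesis and are actually used downstream (e.g.\ in Theorem~\ref{t:bier-std} when $n\equiv 2\bmod 3$), the proof is incomplete at precisely the point you call the heart of the argument.

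The fix requires no new boundary analysis: in your refined form, regroup the second and third block rows, noting that $M'_{i-1,j-1}$ stacked on $\W_{i,j-1}$ \emph{is} $M'_{i,j-1}=M_{i,j-1}(n-1)$. This exhibits $M_{i,j}(n)$ as block triangular with diagonal blocks $M'_{i-1,j}$ and $M'_{i,j-1}$, whose induction hypotheses read $(i-1)+2j\le n-1$ and $i+2(j-1)\le n-1$ respectively --- both implied by $i+2j\le n$ (and both blocks are nonempty since $0<i<j$). The matrix $M_{i,j}(n-1)$ never enters, the ``boundary'' disappears, and your rank count $\mu_{i-1}(n-1)+\mu_i(n-1)=\mu_i(n)$ together with your $p$-rank criterion for index $1$ (a clean alternative to the paper's clear-out-to-an-identity-block argument) finishes the proof uniformly. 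With that one regrouping your argument becomes essentially the paper's; without it, it does not close.
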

\begin{proof}
We proceed by induction on $n$, so at times we will need to be slightly more explicit in our notation to make clear the set $\{1, 2, \cdots , n\}$ from which our subsets are taken.  Thus, we write $\W_{i,j} = \W_{i,j}(n)$ and $\mu_{i} = \mu_{i}(n) = \binom{n}{i} - \binom{n}{i-1}$.  

Clearly the result holds for the base case $i=j=0$ and $n=1$.  So assume now that $n>1$ and that $0 \leq i \leq j \leq \frac{n-i}{2}$.  We consider separately three cases that exhaust the possibilities for $i$ and $j$.

\textit{Case:} $i=0$.  Here we must have $j \leq \frac{n}{2}$ and so there are indeed standard $j$-subsets; i.e. the matrix $M_{0,j}$ does not have an empty set of columns. (This is precisely why we need the ``$2$'' in our hypothesis of the Lemma.)  In this case we have
\[
M_{0,j} = [1, 1, \cdots, 1]
\]
which has rank $\mu_{0} = 1$ and index $1$.

\textit{Case:} $i = j$.  In this case we have the identity matrix $\W_{i,i}$ found at the bottom of $M_{i,i}$.  By integral row operations all entries in $M_{i,i}$ above the bottom $W_{i,i}$ block can be cleared out using this identity matrix.  Thus in this case we see $M_{i,i}$ has rank $\mu_{i}$ and index $1$.

\textit{Case:}  $0<i<j$.  

For $0 \leq s \leq j$, we may recognize $\W_{s,j}$ as a $2 \times 2$ block matrix by ordering rows and columns of $\W_{s,j}$ so that the standard subsets containing the element $n$ appear first.  We have  
\[
\W_{s,j}(n) = 
\left [ \begin{array}{c|c}
\W_{s-1,j-1}(n-1) & 0 \\
\hline
\W_{s,j-1}(n-1) & \W_{s,j}(n-1)
\end{array} \right ].
\]

In the same way, letting $\sim$ denote equivalence by row and column operations, we obtain 
\begin{align*}
M_{i,j}(n) &\sim \left [ \begin{array}{c|c}
M_{i-1,j-1}(n-1) & 0 \\
\hline
\hline
M_{i,j-1}(n-1) & M_{i,j}(n-1)
\end{array} \right ] 
= \left [ \begin{array}{c|c}
M_{i-1,j-1}(n-1) & 0\\
\hline
\hline
M_{i-1,j-1}(n-1) & M_{i-1,j}(n-1)\\
\hline
\W_{i,j-1}(n-1) & \W_{i,j}(n-1) 
\end{array} \right ] \\
&\sim \left [ \begin{array}{c|c}
M_{i-1,j-1}(n-1) & 0\\
\hline
\hline
\W_{i,j-1}(n-1) & \W_{i,j}(n-1) \\
\hline
M_{i-1,j-1}(n-1) & M_{i-1,j}(n-1)
\end{array} \right ] 
\sim \left [ \begin{array}{c|c}
M_{i-1,j-1}(n-1) & 0\\
\hline
\hline
\W_{i,j-1}(n-1) & \W_{i,j}(n-1) \\
\hline
0 & M_{i-1,j}(n-1)
\end{array} \right ] \\
&= \left [ \begin{array}{c|c}
M_{i,j-1}(n-1) & 
\begin{array}{c}
0 \\
\hline
\W_{i,j}(n-1) \end{array}
\\
\hline
\hline
0 & M_{i-1,j}(n-1)
\end{array} \right ].
\end{align*}

Recall that our assumption on $M_{i,j}(n)$ is $2j+i \leq n$.  Since $0<i<j$, none of the block matrices written above are empty.  In particular, $M_{i,j-1}(n-1)$ and $M_{i-1,j}(n-1)$ satisfy the induction hypothesis (even though $M_{i,j}(n-1)$ might not!) and so we have 
\[
M_{i,j}(n) \sim \left [ \begin{array}{c|c}
\begin{array}{c|c}
I_1 & 0 \\
\hline
0 & 0
\end{array} & 
? \\
\hline
0 & \begin{array}{c|c}
I_2 & 0 \\
\hline
0 & 0
\end{array}
\end{array} \right ]
\]
where $I_1, I_2$ are identity matrices of orders $\mu_{i}(n-1)$ and $\mu_{i-1}(n-1)$, respectively.  Since $\rowq(M_{i,j}) = \rowq(\W_{i,j})$, we know that $M_{i,j}(n)$ has rank at most $\mu_{i}(n)$, whereas the integrally equivalent matrix from above yields that $M_{i,j}(n)$ has rank at least 
\[
\mu_{i}(n-1) + \mu_{i-1}(n-1) =  \binom{n-1}{i} - \binom{n-1}{i-1} + \binom{n-1}{i-1} - \binom{n-1}{i-2} = \binom{n}{i} - \binom{n}{i-1} = \mu_{i}(n).
\]
Thus, the rank of $M_{i,j}(n)$ must be exactly $\mu_{i}(n)$.  

Further integral row/column operations reduce $M_{i,j}(n)$ to having an identity principal submatrix $I$ of order $\mu_{i}(n)$, from which we can clear out the blocks to the right and below:
\[ M_{i,j}(n) \sim \left [ \begin{array}{c|c}
I & 0 \\
\hline
0 & ?
\end{array} \right ]. \] 
In order to have the correct rank, the remaining diagonal block must therefore be zero, so $M_{i,j}(n)$ has index $1$.
\end{proof}

\begin{definition}
Let $d_{i,j}(n)$ denote $\prod_{s=0}^{i} \binom{j-s}{i-s}^{\mu_{s}(n) - \mu_{s-1}(n)}$, where $\mu_{s}(n)$ is \mbox{$\binom{n}{s} - \binom{n}{s-1}$}.
\end{definition}

We will eventually show that the factors in this formula serve as entries in a diagonal form for $\W_{i,j}$.  At present, we are in position to prove the following weaker result.

\begin{lemma}\label{thm:index}
Let $0 \leq i \leq j \leq \frac{n-i}{2}$.  The index of $\W_{i,j}$ divides $d_{i,j}(n)$.
\end{lemma}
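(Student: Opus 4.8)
The plan is to deduce the divisibility $[\,Z(\W_{i,j}) : \rowz(\W_{i,j})\,] \mid d_{i,j}(n)$ by exhibiting, inside $\rowz(\W_{i,j})$, a sublattice whose index in $Z(\W_{i,j})$ is exactly $d_{i,j}(n)$. The natural source of such a sublattice is the stacked matrix $M_{i,j}$, which by Lemma~\ref{l:mind1} has index $1$ and full rank $\mu_i(n)$: thus $\rowz(M_{i,j}) = Z(M_{i,j}) = Z(\W_{i,j})$, using that $\rowq(M_{i,j}) = \rowq(\W_{i,j})$ already established. So it suffices to show that the lattice generated by the rows of $\W_{i,j}$ has index dividing $d_{i,j}(n)$ inside the lattice generated by all the rows of $M_{i,j} = \bigcup_{s=0}^{i} \W_{s,j}$.

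First I would set up the two lattices explicitly. By Lemma~\ref{l:fund}, every row of $\W_{s,j}$ lies in $\rowq(\W_{i,j})$, and more precisely $\W_{s,i}\W_{i,j} = \binom{j-s}{i-s}\W_{s,j}$, which expresses $\binom{j-s}{i-s}$ times each row of $\W_{s,j}$ as an \emph{integer} combination of rows of $\W_{i,j}$. The idea is then to build a telescoping filtration $\rowz(\W_{i,j}) = L_i \subseteq L_{i-1} \subseteq \cdots \subseteq L_0 = \rowz(M_{i,j})$, where $L_s$ is generated by the rows of $\W_{t,j}$ for $s \le t \le i$. At each step the quotient $L_{s-1}/L_s$ is generated by the images of the $\mu_{s-1}(n)$ rows of $\W_{s-1,j}$; by the multiplier identity above (applied appropriately to pass from level $s-1$ up one level, or more directly by iterating Lemma~\ref{l:fund}), each such generator is killed by $\binom{j-(s-1)}{i-(s-1)}$ — or better, one should track the single-step multiplier $\binom{j-s+1}{i-s+1}/\binom{j-s+1}{i-s}$-type ratios so that the product over the whole filtration telescopes to exactly the exponents $\mu_s(n) - \mu_{s-1}(n)$ appearing in $d_{i,j}(n)$. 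Carefully bookkeeping which rows are ``new'' at each level (there are $\mu_{s-1}(n)$ rows indexed by standard $(s-1)$-subsets, but $\mu_{s-2}(n)$ of the relevant relations are already accounted for at the previous level) is what produces the exponent $\mu_s(n) - \mu_{s-1}(n)$ rather than $\mu_s(n)$.

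The cleaner way to package this, and the route I would actually write up, is: since $M_{i,j}$ has index $1$ and full rank, choose an integral basis of $Z(\W_{i,j}) = \rowz(M_{i,j})$ adapted to the filtration (a ``triangular'' or Hermite-type basis) in which the rows of $\W_{s,j}$ occupy the first $\mu_s(n)$ coordinates in a compatible way; then the matrix expressing $\rowz(\W_{i,j})$ in this basis is block-triangular with diagonal blocks that are scalar multiples $\binom{j-s}{i-s} \cdot I_{\mu_s(n)-\mu_{s-1}(n)}$ of identity matrices, whence its determinant — the index — is $\prod_{s=0}^{i}\binom{j-s}{i-s}^{\mu_s(n)-\mu_{s-1}(n)} = d_{i,j}(n)$, and in particular the actual index divides this. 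One must also handle the degenerate boundary values of $s$ (where $\mu_{s-1}(n)$ may be interpreted as $0$, matching the convention in the definition of $d_{i,j}(n)$) and confirm the hypothesis $j \le \frac{n-i}{2}$ is exactly what guarantees every $\W_{s,j}$ with $s \le i$ has a nonempty column set so that $\mu_s(n)$ is given by the stated binomial formula and Lemma~\ref{l:mind1} applies at all the needed parameter values.

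The main obstacle is the combinatorial bookkeeping of \emph{multiplicities}: getting the exponent to come out as $\mu_s(n) - \mu_{s-1}(n)$ rather than $\mu_s(n)$ requires arguing that, when we adjoin the rows of $\W_{s-1,j}$ to those of $\W_{s,j}$ (and the higher ones), only $\mu_{s-1}(n) - \mu_{s-2}(n)$ of them contribute a genuinely new cyclic factor of order $\binom{j-s+1}{i-s+1}$, because a $\mu_{s-2}(n)$-dimensional piece of $\rowq(\W_{s-1,j})$ already sits inside $\rowq(\W_{s,j})$ with a \emph{smaller} multiplier. Making this precise means keeping track not just of ranks but of how the Smith invariants of the successive inclusion matrices $\W_{s-1,s}$ (which are themselves of this same family, with known index one level down) interact — essentially an inductive hypothesis on $i$. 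I would therefore structure the proof as an induction on $i$: the case $i = 0$ is the all-ones row (index $1 = d_{0,j}(n)$), and the inductive step peels off the top block $\W_{i,j}$ from $M_{i,j}$, uses Lemma~\ref{l:mind1} to know $M_{i-1,j}$ and $M_{i,j}$ are both index $1$, and uses Lemma~\ref{l:fund} with $s = i-1$ to quantify the single new multiplier $\binom{j-i+1}{1} = j - i + 1$ contributed at multiplicity $\mu_i(n) - \mu_{i-1}(n)$, folding the rest into the inductive hypothesis applied to the column-restricted or smaller-$i$ stacked matrices.
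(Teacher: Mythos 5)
Your proposal is correct and follows essentially the same route as the paper's proof: identify $Z(\W_{i,j})$ with $\rowz(M_{i,j})$ via Lemma~\ref{l:mind1}, build a $\Z$-basis of that lattice adapted to the filtration by the $\rowz(M_{t,j})$ (the paper does this with Wilson's Proposition~2, adjoining at level $t$ exactly $\mu_t-\mu_{t-1}$ new basis vectors chosen from $\rowz(\W_{t,j})$ --- the precise form of your ``Hermite-type basis,'' and the step that defeats the naive telescoping exponent $\mu_s$), and then inflate the level-$s$ basis vectors by $\binom{j-s}{i-s}$ so that Lemma~\ref{l:fund} places them inside $\rowz(\W_{i,j})$, producing a sublattice of index exactly $d_{i,j}(n)$ in $Z(\W_{i,j})$. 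The only phrasing to tighten is that it is this inflated sublattice, not $\rowz(\W_{i,j})$ itself, that is block-scalar in the adapted basis; $\rowz(\W_{i,j})$ merely contains it, which is exactly why the conclusion is divisibility rather than equality at this stage.
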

\begin{proof}
Let $0 \leq i \leq j \leq \frac{n-i}{2}$.  We will first show that there exist integer matrices $F_{0,j}, F_{1,j}, \cdots , F_{i,j}$ such that:
\begin{itemize}
    \item[(A)] the rows of $F_{s,j}$ lie in $\rowz(\W_{s,j})$, and
    \item[(B)] the rows of the stacked matrix $\bigcup_{s=0}^t F_{s,j}$ are a $\Z$-basis for $\rowz(M_{t,j})$, whenever $t \leq i$.
\end{itemize}
To this end, let $F_{0,j} = \W_{0,j}$.  If $i=0$ we are done.  Otherwise, inductively, assume that $F_{0,j}, F_{1,j}, \cdots , F_{t, j}$ have been defined for $t < i$.  Then $F_{0,j} \cup \cdots \cup F_{t,j}$ has index $1$, being a $\Z$-basis for $\rowz(M_{t,j})$, so by \cite[Proposition 2]{wilson_classic} we can adjoin $\mu_{t+1}-\mu_{t}$ row vectors from $\rowz(\W_{t+1,j})$ to get a $\Z$-basis for $\rowz(M_{t+1,j}) = \rowz(M_{t,j}) + \rowz(\W_{t+1,j})$.  These additional rows are the rows of $F_{t+1,j}$.  This proves the existence of the sequence $\{F_{s,j}\}_{s=0}^{i}$ with Properties (A) and (B) above.

Next, we claim that the rows of the ``inflated'' stacked matrix $\bigcup_{s=0}^{i}\binom{j-s}{i-s} F_{s,j}$ lie in $\rowz(\W_{i,j})$.  To see this, let $0 \leq s \leq i$.  By Property (A) the rows of $F_{s,j}$ lie in $\rowz(\W_{s,j})$, so there exists an integer matrix $C$ so that $C \W_{s,j} = F_{s,j}$.  Multiplying on both sides by $\binom{j-s}{i-s}$ and using Lemma~\ref{l:fund}, we obtain
\[ \binom{j-s}{i-s} F_{s,j} = \binom{j-s}{i-s} C \W_{s,j} = C (\W_{s,i} \W_{i,j}) = \left(C \W_{s,i}\right) \W_{i,j}. \]

Finally, we have $d_{i,j} = [\rowz(M_{i,j}): \bigcup_{s=0}^{i}\binom{j-s}{i-s} F_{s,j}]$ by Property (B) for the $F$ matrices above.
On the other hand, we have as a consequence of Lemma~\ref{l:mind1} that $Z(M_{i,j}) = \rowz(M_{i,j}) = Z(\W_{i,j})$.  Thus, 
\[ d_{i,j} = [Z(\W_{i,j}) : \bigcup_{s=0}^{i}\binom{j-s}{i-s} F_{s,j}] = [Z(\W_{i,j}) :   \rowz(\W_{i,j}) ]   \cdot  [\rowz(\W_{i,j}) :  \bigcup_{s=0}^{i}\binom{j-s}{i-s} F_{s,j}]. \]
Thus, the index of $\W_{i,j}$ divides $d_{i,j} = \prod_{s=0}^{i} \binom{j-s}{i-s}^{\mu_{s} - \mu_{s-1}}$.
\end{proof}

\subsection{Constructing the diagonal form for $\W_{i,j}$}

In this section we will need the hypothesis $0 \leq i \leq j \leq \frac{n-i}{2}$ of the propositions we developed so far to be satisfied for all $i \leq j$, and in particular when $i = j-1$.  Thus, we need $j \leq \frac{n+1}{3}$, and this will be the hypothesis for the following results.

\begin{definition}\label{d:diag_for_w}
For $0 \leq i \leq j \leq \frac{n+1}{3}$, let $D_{i,j}$ denote the $\mu_{i} \times \mu_{j}$ diagonal matrix with diagonal entries given by 
\[ \left\{ {j-s \choose i-s}^{\mu_s - \mu_{s-1}} : 0 \leq s \leq i \right\} \]
where exponents denote multiplicity of the diagonal entry.  Similarly, we use $D_{i,j}^{\prime}$ to denote the $\mu_{i} \times \mu_{i}$ diagonal matrix containing the same entries; that is, $D_{i,j}^{\prime}$ is just $D_{i,j}$ with the zero columns removed.
\end{definition}

The proof of the following result contains the main construction of this section.

\begin{theorem}\label{t:bier-std}
For each $0 \leq s \leq \frac{n+1}{3}$, there exists a unimodular $\mu_s \times \mu_s$ matrix $E_{s}$ such that whenever $s \leq r \leq \frac{n+1}{3}$, we have
\begin{equation}\label{eqn:recursion}
    E_{s}\W_{s,r} = D_{s,r}^{\prime} E_{s,r}^{\prime} \tag{*}
\end{equation}
for some $\mu_{s} \times \mu_{r}$ integer matrix $E_{s,r}^{\prime}$.
\end{theorem}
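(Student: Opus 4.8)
The plan is to construct the matrices $E_s$ by downward induction on $s$, starting from $s = \lfloor \frac{n+1}{3}\rfloor$ (where $\W_{s,s}$ is the identity, so $E_s = I_{\mu_s}$ works and \eqref{eqn:recursion} holds trivially with $D'_{s,s} = I$ and $E'_{s,s} = I$), and working down to $s = 0$. So suppose $E_{s+1}, E_{s+2}, \ldots$ have all been constructed, satisfying \eqref{eqn:recursion}; I want to build $E_s$. The key relation to exploit is Lemma~\ref{l:fund}, which gives $\W_{s,r} = \binom{r-s}{?}\cdots$ — more precisely, for the case $i = s$, $j = r$, and the intermediate size $s+1$, we have $\W_{s,s+1}\W_{s+1,r} = \binom{r-s}{r-s-1}\W_{s,r} = (r-s)\W_{s,r}$. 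That identity alone only relates $\W_{s,r}$ to $\W_{s,s+1}$ up to multiplication by $(r-s)$, which is not quite enough; the cleaner approach is to use the stacked-matrix machinery from Lemma~\ref{thm:index} and its proof.

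Concretely, here is the construction I expect to work. Fix $s$ and take $r = s+1$ (the ``smallest'' nontrivial case); by Lemma~\ref{thm:index} applied with $(i,j) = (s, s+1)$ — valid since $j = s+1 \leq \frac{n+1}{3}$ is exactly the running hypothesis — the index of $\W_{s,s+1}$ divides $d_{s,s+1}$, whose prime factorization matches the diagonal of $D_{s,s+1}$. In fact I expect to prove that $\W_{s,s+1}$ has diagonal form exactly $D_{s,s+1}$ (rank $\mu_s$, index exactly $d_{s,s+1}$), by combining the divisibility from Lemma~\ref{thm:index} with a lower bound: the matrix $\W_{s,s+1}$ surjects (rationally) onto $\rowq(\W_{s,s+1}) = \rowq(M_{s,s+1})$, and one chases the $p$-ranks through the stacked-matrix decomposition $M_{s,s+1} = \bigcup_{t=0}^{s}\W_{t,s+1}$ together with the scaling $\binom{s+1-t}{s-t}\,F_{t,s+1} \in \rowz(\W_{s,s+1})$ established in Lemma~\ref{thm:index}'s proof. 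Having the diagonal form $D_{s,s+1}$ for $\W_{s,s+1}$, we get unimodular matrices with $E_s \W_{s,s+1} (\text{something}) = D_{s,s+1}$; absorbing the right factor, $E_s\W_{s,s+1} = D'_{s,s+1} E'_{s,s+1}$ for some integer $E'_{s,s+1}$, and one checks $E'_{s,s+1}$ is itself of index $1$ (it is, up to the unimodular $E_s$, a basis-change of the ``reduced'' part of $\W_{s,s+1}$). This defines $E_s$. Then for general $r$ with $s \leq r \leq \frac{n+1}{3}$ I would verify \eqref{eqn:recursion} by writing $\W_{s,r}$ in terms of $\W_{s,s+1}$ and $\W_{s+1,r}$: since $\rowq(\W_{s,r}) = \rowq(\W_{s,s+1})$ (both equal $\rowq(M_{s,r}) = \rowq(M_{s,s+1})$ by Lemma~\ref{l:fund}), there is a rational matrix $X$ with $\W_{s,r} = X\,\W_{s,s+1}$; multiplying $E_s\W_{s,r} = E_s X \W_{s,s+1}$ and using the just-derived relation for $\W_{s,s+1}$, the diagonal $D'_{s,r}$ emerges once one checks the entries of $D_{s,r}$ are precisely those of $D_{s,s+1}$ scaled by the binomial ratios $\binom{r-s}{i-s}/\binom{s+1-s}{i-s}$ grouped by multiplicity blocks $\mu_t - \mu_{t-1}$ — an elementary binomial identity — and that the leftover factor is integral (this is where Lemma~\ref{l:fund}'s exact divisibility of $\W_{s,r}$ by $\W_{t,r}$ is used).

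The main obstacle, I expect, is making the inductive bookkeeping precise: one must be careful that $E_s$ is defined using only $r = s+1$ data yet satisfies \eqref{eqn:recursion} for \emph{all} larger $r$ simultaneously and with an \emph{integer} (not merely rational) matrix $E'_{s,r}$ on the right. The integrality is the delicate point — it amounts to showing that after the column operations that produce $D'_{s,r}$, no denominators survive — and the natural way to guarantee it is to keep track, via Property (B) of Lemma~\ref{thm:index}, of an explicit $\Z$-basis $\bigcup_{t=0}^{s} F_{t,r}$ for $\rowz(M_{s,r})$ and to \emph{define} the rows of $E_s$ (equivalently, $E_s^{-1}$) in terms of these bases across all $r$ coherently. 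I would therefore set up the induction to carry not just the matrix $E_s$ but a compatible family of row-basis data, and phrase the key lemma as: $E_s$ transforms the $F$-bases for the successive $\rowz(M_{t,r})$ into the standard unit-vector flag, uniformly in $r$. Verifying that this single $E_s$ then yields \eqref{eqn:recursion} for every $r$ becomes a routine matrix computation once the binomial identity for the diagonal entries is in hand.
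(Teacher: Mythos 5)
Your proposal has a genuine structural gap at exactly the point you flag as ``delicate,'' and the paper's resolution is the reverse of your setup. You run the induction \emph{downward} in $s$ and construct each $E_s$ independently, from a Smith-form computation for $\W_{s,s+1}$ alone. That does produce a unimodular $E_s$ with $E_s\W_{s,s+1}=D'_{s,s+1}E'_{s,s+1}$, but it gives you no control over $E_s\W_{s,r}$ for $r>s+1$: writing $E_s\W_{s,r}=\frac{1}{r-s}E_s\W_{s,s+1}\W_{s+1,r}=\frac{1}{r-s}D'_{s,s+1}E'_{s,s+1}\W_{s+1,r}$ via Lemma~\ref{l:fund}, you now need $E'_{s,s+1}\W_{s+1,r}$ to be left-divisible by the correct diagonal factors, and an $E'_{s,s+1}$ coming from an arbitrary Smith-form factorization has no relation to your independently chosen $E_{s+1}$, so the downward inductive hypothesis is never usable. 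The paper instead runs the induction \emph{upward}: $E_0=[1]$, and $E_{s+1}$ is \emph{defined} as a unimodular completion (via Wilson's Proposition~2, after checking $E'_{s,s+1}$ has index $1$ using Lemma~\ref{thm:index}) of the matrix $E'_{s,s+1}$ produced at the previous level. That forced compatibility between consecutive levels is precisely what lets the ``for all $r$'' property of $E_s$ push forward to $E_{s+1}$, since then $E_{s+1}\W_{s+1,r}$ has $E'_{s,s+1}\W_{s+1,r}=(D'_{s,s+1})^{-1}(r-s)E_s\W_{s,r}$ as its top block. Your closing paragraph gestures at carrying ``compatible row-basis data,'' which is morally the same idea, but you never specify the mechanism; as written the verification for $r>s+1$ does not go through.

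A secondary error: you assert $\rowq(\W_{s,r})=\rowq(\W_{s,s+1})$ and deduce a left factorization $\W_{s,r}=X\,\W_{s,s+1}$. These row spaces live in different ambient spaces ($\Q^{\mu_r}$ versus $\Q^{\mu_{s+1}}$) and cannot be equal; the identity you actually have from Lemma~\ref{l:fund} is the right factorization $(r-s)\W_{s,r}=\W_{s,s+1}\W_{s+1,r}$, which is what both you and the paper must use. Finally, you propose to prove that the index of $\W_{s,s+1}$ is \emph{exactly} $d_{s,s+1}$ before constructing $E_s$; the paper only needs the divisibility from Lemma~\ref{thm:index}, and the exact value falls out of the sandwich of indices at the end of the argument --- a simplification worth adopting.
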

\begin{proof}
We suppose $n$ is fixed and proceed by induction on $s$.  First, define $E_{0} = [1]$.  Then for all $r \geq 0$ we have
\begin{align*}
E_{0}\W_{0,r} &= [1] \begin{bmatrix} 1 & 1 & \cdots & 1 \end{bmatrix}\\
&= D_{0,r}^{\prime} E_{0,r}^{\prime}.
\end{align*}

Next, assume that $E_{s}$ has been constructed and that it satisfies Property~(\ref{eqn:recursion}) for all valid $r$.  Then, define $E_{s,s+1}^\prime$ via the equation
\begin{align} \label{eqn:index1}
    E_{s}\W_{s,s+1} &= D_{s,s+1}^{\prime} E_{s,s+1}^{\prime}.  \tag{**}
\end{align}

By the induction hypothesis, $E_{s,s+1}^{\prime}$ is an integer matrix, and we claim that it has index $1$.  Assume for a moment that this is true so we can explain how to complete the induction.  By \cite[Proposition 2]{wilson_classic}, we may adjoin additional rows to $E_{s,s+1}^{\prime}$ to form a $\mu_{s+1} \times \mu_{s+1}$ unimodular matrix.  Do so, and define this matrix to be $E_{s+1}$.  Let $B$ denote the $(\mu_{s+1}-\mu_{s}) \times \mu_{s+1}$ matrix containing the rows that we adjoined to $E_{s,s+1}^{\prime}$ to build $E_{s+1}$.  

Now, to check that $E_{s+1}$ continues to satisfy Property~(\ref{eqn:recursion}), fix $r \geq s+1$.  We have
\begin{align*}
    E_{s+1} \W_{s+1,r} &= \left [\begin{array}{c} E_{s, s+1}^{\prime}\\ \hline
    B
    \end{array} \right ] \cdot \W_{s+1,r}
    = 
    \left [\begin{array}{r} E_{s, s+1}^{\prime} \cdot \W_{s+1,r} \\ \hline
    B \cdot \W_{s+1,r} 
    \end{array} \right ]\\
    &= 
    \left [\begin{array}{r} \left ( D_{s,s+1}^{\prime} \right )^{-1} E_{s} \W_{s,s+1} \cdot \W_{s+1,r} \\ \hline
    B \cdot \W_{s+1,r} 
    \end{array} \right ],\ \mbox{ by definition of $E_{s, s+1}^{\prime}$}\\
    &= 
    \left [\begin{array}{r} \left ( D_{s,s+1}^{\prime} \right )^{-1} E_{s} \cdot (r-s) \W_{s,r} \\ \hline
    B \cdot \W_{s+1,r} 
    \end{array} \right ],\ \mbox{ by Lemma \ref{l:fund}}\\
    &= 
    \left [\begin{array}{r} \left ( D_{s,s+1}^{\prime} \right )^{-1}  (r-s) D_{s,r}^{\prime} \cdot E_{s,r}^{\prime} \\ \hline
    B \cdot \W_{s+1,r} 
    \end{array} \right ],\ \mbox{ by Property~(\ref{eqn:recursion}).}
\end{align*}
Examining carefully, we see that $\left (D_{s,s+1}^{\prime} \right )^{-1}  (r-s) D_{s,r}^{\prime}$ is a $\mu_{s} \times \mu_{s}$ diagonal matrix with entries given by
\begin{align*}
\left[ \frac{r-s}{s+1-t} \cdot \binom{r - t}{s - t} \right]^{\mu_{t}-\mu_{t-1}} 
= \binom{r - t}{s +1 - t}^{\mu_{t}-\mu_{t-1}},
\end{align*}
for $0 \leq t \leq s$.  If we enlarge this to a diagonal matrix of size $\mu_{s+1} \times \mu_{s+1}$ by adding $\mu_{s+1} - \mu_{s}$ diagonal entries equal to $1$, then we get precisely the matrix $D_{s+1,r}^{\prime}$.  Therefore
\[
E_{s+1} \W_{s+1,r} = D_{s+1,r}^{\prime} \cdot \left [\begin{array}{c} E_{s,r}^{\prime} \\ \hline
    B \cdot \W_{s+1,r} 
    \end{array} \right ],
\]
as desired.

Thus, it remains only to check that $E_{s,s+1}^{\prime}$ has index $1$, and this is where our earlier results on the index of $\W_{i,j}$ come into play.  Recall that $d_{s,t}$ denotes the product of all of the diagonal entries in $D_{s,t}^{\prime}$, and observe that:
\begin{itemize}
\item By Lemma~\ref{thm:index}, we know $\W_{s,s+1}$ has index dividing $d_{s,s+1}$, so we obtain 
\[ d_{s,s+1} \geq [Z(\W_{s,s+1}) : \rowz(\W_{s,s+1})]. \]

\item Since $E_{s}$ is unimodular, any integer linear combination $\vec{x}\W_{s,s+1}$ of the rows of $\W_{s,s+1}$ can be written
\[
\vec{x}\W_{s,s+1} = \vec{x}E_{s}^{-1} E_{s}\W_{s,s+1} = \vec{y}E_{s}\W_{s,s+1},
\]
and so we have $\rowz(\W_{s,s+1}) = \rowz(E_{s}\W_{s,s+1})$. 

\item Since $\W_{s,s+1}$ has full row rank, Equation~(\ref{eqn:index1}) shows that the rows of $E_{s,s+1}^{\prime}$ are linearly independent and therefore are a $\Z$-basis for their $\Z$-span $\rowz(E_{s,s+1}^{\prime})$.  Therefore 
\[ [\rowz(E_{s,s+1}^{\prime}) : \rowz(E_{s}\W_{s,s+1})] = [\rowz(E_{s,s+1}^{\prime}) : \rowz(D_{s,s+1}^{\prime} E_{s,s+1}^{\prime})] = d_{s,s+1}. \]
\end{itemize}
Putting these facts together, we have
\begin{align*}
    d_{s,s+1} &\geq [Z(\W_{s,s+1}) : \rowz(\W_{s,s+1})]\\
    &= [Z(\W_{s,s+1}) : \rowz(E_{s}\W_{s,s+1})]\\
    &= [Z(\W_{s,s+1}) : \rowz(E_{s,s+1}^{\prime})] \cdot [\rowz(E_{s,s+1}^{\prime}) : \rowz(E_{s}\W_{s,s+1})]\\
    &= [Z(\W_{s,s+1}) : \rowz(E_{s,s+1}^{\prime})] \cdot d_{s,s+1}
\end{align*}
and so $[Z(\W_{s,s+1}) : \rowz(E_{s,s+1}^{\prime})]=1$.

Finally, we claim that $Z(\W_{s,s+1}) = Z(E_{s,s+1}^{\prime})$.  It follows from Equation~(\ref{eqn:index1}) that the rows of $E_{s,s+1}^{\prime}$ are in $\rowq(\W_{s,s+1})$.  Therefore $\rowq(E_{s,s+1}^{\prime}) \subseteq \rowq(\W_{s,s+1})$, and so $Z(E_{s,s+1}^{\prime}) \subseteq Z(\W_{s,s+1})$.
Thus, we have
\begin{align*}
1 &= [Z(\W_{s,s+1}) : \rowz(E_{s,s+1}^{\prime})]\\
&= [Z(\W_{s,s+1}) : Z(E_{s,s+1}^{\prime})] \cdot [Z(E_{s,s+1}^{\prime}) : \rowz(E_{s,s+1}^{\prime})].
\end{align*}
Therefore, $[Z(E_{s,s+1}^{\prime}) : \rowz(E_{s,s+1}^{\prime})]=1$ and so $E_{s,s+1}^{\prime}$ has index $1$.
\end{proof}

\begin{remark}
To summarize the construction, we begin with the matrix $E_{0}=[1]$, and then each successive unimodular matrix $E_{j+1}$ comes from applying $\W_{j,j+1}$ to $E_{j}$, dividing each row by an integer to make the entries of that row relatively prime, and then extending this matrix to a unimodular one.  In all cases we have computed, it is possible to achieve this unimodular extension to $E_{j+1}$ by adjoining certain rows of the identity matrix.  This seems to suggest that these matrices could be obtained by selecting certain ``super-standard'' subsets.
\end{remark}

Finally, we complete the program we outlined at the beginning of this section by observing that the sequence $\{E_{s}\}$ turns out to have a very nice index-uniformity property.

\begin{theorem} \label{t:standard_bier}
Let $0 \leq i \leq j \leq \frac{n+1}{3}$, and let $\{E_{s}\}$ be the sequence of square unimodular matrices defined in Theorem~\ref{t:bier-std}.  Then,
\begin{equation*}
        E_{i}\W_{i,j} = D_{i,j}E_{j}.
\end{equation*}
\end{theorem}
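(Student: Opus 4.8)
The plan is to reduce the identity to a single structural observation about the matrices produced inside the proof of Theorem~\ref{t:bier-std}: that $E_{i,j}'$ is literally the submatrix of $E_j$ formed by its first $\mu_i$ rows. Once this is in hand the theorem drops out immediately. By Definition~\ref{d:diag_for_w} the $\mu_i\times\mu_j$ matrix $D_{i,j}$ is obtained from the invertible diagonal matrix $D_{i,j}'$ by appending $\mu_j-\mu_i$ zero columns, so $D_{i,j}=\left[\,D_{i,j}'\mid 0\,\right]$; writing $E_j$ in block form with top block $E_j^{\mathrm{top}}$ equal to its first $\mu_i$ rows, block multiplication gives
\[
D_{i,j}E_j \;=\; D_{i,j}'\,E_j^{\mathrm{top}} \;=\; D_{i,j}'\,E_{i,j}' \;=\; E_i\W_{i,j},
\]
where the final equality is Property~(\ref{eqn:recursion}) of Theorem~\ref{t:bier-std} applied with $r=j$.

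So the content to be checked is the observation that $E_{i,j}'$ is the first $\mu_i$ rows of $E_j$, for all $0\le i\le j\le\frac{n+1}{3}$. First note that $E_{i,j}'$ is unambiguous: because $D_{i,j}'$ is an invertible diagonal matrix, the relation $E_i\W_{i,j}=D_{i,j}'E_{i,j}'$ pins down $E_{i,j}'$ uniquely. The needed recursion is already present in the proof of Theorem~\ref{t:bier-std}, where it is shown that $E_{s+1}\W_{s+1,r}$ equals $D_{s+1,r}'$ times the stacked matrix with $E_{s,r}'$ on top and $B\,\W_{s+1,r}$ below, $B$ being the $(\mu_{s+1}-\mu_s)\times\mu_{s+1}$ block of rows adjoined to $E_{s,s+1}'$ to build $E_{s+1}$. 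By the uniqueness just noted this reads
\[
E_{s+1,r}' \;=\; \left[\begin{array}{c} E_{s,r}'\\ \hline B\,\W_{s+1,r}\end{array}\right],
\]
so the first $\mu_s$ rows of $E_{s+1,r}'$ are exactly $E_{s,r}'$; and $E_{j,j}'=E_j$ since $\W_{j,j}=I$ and $D_{j,j}'=I_{\mu_j}$. Now fix $i\le j$, take $r=j$, and peel: using $\mu_0\le\mu_1\le\cdots\le\mu_j$ (which holds in this range --- indeed it is implicit wherever the proof adjoins $\mu_{s+1}-\mu_s$ rows), the first $\mu_i$ rows of $E_j=E_{j,j}'$ equal the first $\mu_i$ rows of $E_{j-1,j}'$, which equal the first $\mu_i$ rows of $E_{j-2,j}'$, and so on down to the first $\mu_i$ rows of $E_{i,j}'$, which is all of $E_{i,j}'$. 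This establishes the observation, and hence the theorem.

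I do not expect a genuine obstacle here: the statement is in essence a bookkeeping repackaging of the recursion already proved for Theorem~\ref{t:bier-std}. The only point requiring care is keeping the two flavours of ``$D$'' matrix apart --- $D_{i,j}$ of shape $\mu_i\times\mu_j$ with trailing zero columns versus $D_{i,j}'$ of shape $\mu_i\times\mu_i$ --- and verifying that the row-split of $E_j$ (its top $\mu_i$ rows versus the rest) matches the column-split of $D_{i,j}$ (its first $\mu_i$ columns versus the zero columns). That compatibility is exactly why identifying $E_{i,j}'$ with the \emph{top} of $E_j$, rather than with $E_j$ in some looser sense, is the right lemma to isolate.
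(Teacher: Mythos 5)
Your proof is correct, but it takes a genuinely different route from the paper's. The paper proves the identity by induction on $i+j$: it factors $\W_{i,j}=\frac{1}{j-i}\W_{i,j-1}\W_{j-1,j}$ via Lemma~\ref{l:fund}, applies the inductive hypothesis and Property~(\ref{eqn:recursion}) with $r=s+1$, and then checks the binomial identity $\frac{1}{j-i}\binom{j-1-s}{i-s}\binom{j-s}{j-1-s}=\binom{j-s}{i-s}$; the only structural fact it extracts from the construction of Theorem~\ref{t:bier-std} is the single-step case, namely that $E'_{j-1,j}$ is the top $\mu_{j-1}$ rows of $E_j$ (which is immediate from how $E_j$ is built). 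You instead isolate the full nesting statement --- $E'_{i,j}$ is the top $\mu_i$ rows of $E_j$ for \emph{all} $i\le j$ --- by first noting that $E'_{s,r}$ is uniquely determined (since $D'_{s,r}$ is invertible) and then unwinding the recursion $E'_{s+1,r}=\bigl[\begin{smallmatrix}E'_{s,r}\\ B\,\W_{s+1,r}\end{smallmatrix}\bigr]$ already present in the proof of Theorem~\ref{t:bier-std}, together with $E'_{j,j}=E_j$. This buys you a proof with no induction on $i+j$, no appeal to Lemma~\ref{l:fund}, and no binomial identity: the theorem becomes a one-line block computation $D_{i,j}E_j=D'_{i,j}E'_{i,j}=E_i\W_{i,j}$. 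The trade-off is that the peeling argument needs $\mu_i\le\mu_{i+1}\le\cdots\le\mu_j$ in the stated parameter range, which you correctly flag (and which the construction already presupposes when it adjoins $\mu_{s+1}-\mu_s$ rows). One convention both arguments share silently: the diagonal entries of $D_{i,j}$ must be ordered consistently with the stacking order of the rows of $E'_{i,j}$ ($s=0$ block first, then $s=1$, and so on); this is fixed by the construction in Theorem~\ref{t:bier-std}, so it is not a gap, but it is worth stating if you write this up. Your version arguably makes the mechanism more transparent, and it also explains \emph{why} $D'$ gets ``replaced by $D$'' in the paper's phrasing.
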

\begin{proof}
We proceed by induction on the sum $i+j$.  First, notice that the result is clearly true when $i=j=0$, and in fact when $i=j \leq \frac{n+1}{3}$ (since $\W_{i,i} = D_{i,i}$ is the identity matrix).
    
So assume that $i < j \leq \frac{n+1}{3}$.  We have
\begin{align*}
        E_{i} \W_{i,j} &= \frac{1}{j-i} E_{i} \W_{i,j-1} \W_{j-1,j}, \mbox{ by Lemma \ref{l:fund}}\\
        &= \frac{1}{j-i} D_{i,j-1} E_{j-1} \W_{j-1,j}, \mbox{ by our induction hypothesis}\\
        &= \frac{1}{j-i} D_{i,j-1} D_{j-1,j}^{\prime} E_{j-1,j}^{\prime}, \mbox{ by the defining property in Theorem~\ref{t:bier-std}. }
\end{align*}
Following the construction given in the proof of Theorem~\ref{t:bier-std}, we have that $D_{j-1,j}^{\prime} E_{j-1,j}^{\prime}$ is $D_{j-1,j} E_{j}$.  Here, $D^{\prime}$ is replaced by $D$ to compensate for the change in the dimensions that results when we add rows to $E_{j-1,j}^{\prime}$ to obtain $E_j$.  Thus, we obtain
\[ E_i \W_{i,j} = \frac{1}{j-i} D_{i,j-1} D_{j-1,j} E_{j} = D_{i,j} E_{j}, \]
since $\frac{1}{j-i}\binom{j-1-s}{i - s} \binom{j - s}{j-1-s} = \binom{j-s}{i-s}$, as desired.
\end{proof}

\begin{definition}
For $0 \leq k \leq \frac{n+1}{3}$, define an $\binom{n}{k} \times \binom{n}{k}$ block diagonal matrix $\E=\E_{k}$, with diagonal blocks given by the matrices $E_{j}^{-1}$ where $0 \leq j \leq k$:
    \[
    \E = \begin{bmatrix}
        E_{1}^{-1} & 0 & 0 & \cdots &0 \\
        0 & E_{2}^{-1} &  0 & \cdots &0\\
        0 & 0 & \ddots & & \vdots \\
        \vdots & \vdots & & &\\
        0 & 0 & \cdots & & E_{k^{-1}}
    \end{bmatrix}.
    \]
Note that by construction, the matrix $\E$ is unimodular.
\end{definition}

By Theorem \ref{t:p_matrix}, the matrix $\A = \A_{n, \kr, \kc, \ell}$ (and the generalized Laplacian $\L$ when $\kr=\kc$) are integrally equivalent to a block matrix comprised of integer multiples of the standard-subset inclusion matrices $\W_{i,j}$.

\begin{corollary}\label{c:w_diagonal}
When $n$ is sufficiently large (e.g. $n \geq 3\kc - 1$), the $\W_{i,j}$ blocks in this configuration can simultaneously be brought into diagonal forms $D_{i,j}$ with entries 
\[ \left\{ {j-s \choose i-s}^{\mu_s - \mu_{s-1}} : 0 \leq s \leq i \right\} \]
where the exponent indicates the multiplicity of the diagonal entry and $\mu_s = {n \choose s} - {n \choose {s-1}}$ (with $\binom{a}{b} = 0$ whenever $b<0$ or $a<b$), by applying $\E_{\kc}$ on the right and $\E_{\kr}^{-1}$ on the left.  In the case when $\kr=\kc$, this is simply another conjugation applied to the generalized Laplacian.
\end{corollary}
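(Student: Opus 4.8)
The plan is to combine the block-triangularization of Theorem~\ref{t:p_matrix} with the uniform diagonalization of Theorem~\ref{t:standard_bier}, taking advantage of the fact that $\E_{\kr}$ and $\E_{\kc}$ are block diagonal and therefore act on each block of the configuration $\P_{\kr}^{-1}\A\P_{\kc}$ independently. I would begin by translating the size hypothesis: since $n \geq 3\kc - 1$ is equivalent to $\kc \leq \frac{n+1}{3}$, every pair of indices $(i,j)$ appearing in the block decomposition satisfies $0 \leq i \leq j \leq \kc \leq \frac{n+1}{3}$ (the blocks with $i > j$ are zero), so Theorem~\ref{t:standard_bier} and all the constructions preceding it apply to the relevant blocks.

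Next I would compute the product $\E_{\kr}^{-1}\left(\P_{\kr}^{-1}\A\P_{\kc}\right)\E_{\kc}$ block by block. By Theorem~\ref{t:p_matrix} the $(i,j)$ block of $\P_{\kr}^{-1}\A\P_{\kc}$ is the $\mu_i \times \mu_j$ matrix $f_i(j)\,\W_{i,j}$, while by definition the $i$th diagonal block of $\E_{\kr}^{-1}$ is $E_i$ and the $j$th diagonal block of $\E_{\kc}$ is $E_j^{-1}$, of the matching sizes $\mu_i$ and $\mu_j$. Since all three factors respect this common block structure, the $(i,j)$ block of the product equals $f_i(j)\, E_i \W_{i,j} E_j^{-1}$. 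This vanishes when $i > j$ because $\W_{i,j} = 0$; it equals $f_i(i)\, I_{\mu_i}$ when $i = j$ because $\W_{i,i} = I$ and $E_i$ is invertible; and when $i < j$, Theorem~\ref{t:standard_bier} gives $E_i \W_{i,j} = D_{i,j}E_j$, whence the block is $f_i(j)\, D_{i,j} E_j E_j^{-1} = f_i(j)\, D_{i,j}$. In all cases the block is the diagonal matrix $f_i(j)\, D_{i,j}$, with the diagonal entries claimed in the statement, so each $\W_{i,j}$ has been replaced by its diagonal form $D_{i,j}$ as desired.

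For the case $\kr = \kc = k$, I would observe that $\E_{\kr} = \E_{\kc} = \E_k$, so that left multiplication by $\E_k^{-1}$ together with right multiplication by $\E_k$ is a conjugation; composing it with the conjugation by $\P_k$ from Theorem~\ref{t:p_matrix} exhibits $\L$ as conjugate to the block matrix whose $(i,j)$ block is $f_i(j)\,D_{i,j}$ for $i < j$, is $(f_i(i) - \lambda)\, I_{\mu_i}$ for $i = j$, and is zero for $i > j$. Because the diagonal blocks are scalar multiples of identity matrices and the off-diagonal blocks are diagonal, this matrix remains upper triangular.

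I do not expect a genuine obstacle here: the content is already contained in Theorems~\ref{t:p_matrix} and \ref{t:standard_bier}. The only points requiring care are purely organizational --- confirming that $\E_k$ is block diagonal with blocks of exactly the sizes $\mu_0, \mu_1, \ldots, \mu_k$ that align with the grouping of rows and columns by subset size used in Theorem~\ref{t:p_matrix}, so that the block-by-block multiplication above is valid, and verifying that the hypothesis $j \leq \frac{n+1}{3}$ of Theorem~\ref{t:standard_bier} is inherited by each block that occurs once $n \geq 3\kc - 1$.
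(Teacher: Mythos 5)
Your proposal is correct and is exactly the argument the paper intends: the corollary is stated without an explicit proof precisely because it follows by multiplying the block decomposition of Theorem~\ref{t:p_matrix} blockwise by the block-diagonal matrices $\E_{\kr}^{-1}$ and $\E_{\kc}$ and invoking $E_i\W_{i,j}=D_{i,j}E_j$ from Theorem~\ref{t:standard_bier}, with the hypothesis $n\geq 3\kc-1$ translating to $j\leq\frac{n+1}{3}$ for every block that occurs. Your verification of the block-size alignment and of the degenerate cases $i=j$ and $i>j$ is the right level of care, and nothing further is needed.
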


In the next section we describe how to extract from this situation a diagonal form of the entire block matrix.

\section{Third step:  The matrix of diagonal blocks can itself be diagonalized using matrices of small dimension}\label{s:diagonalize}

We are now in position to present our framework for finding a diagonal form for $\L = \A_{n,\kr,\kc,\ell} - \lambda \I$.

\begin{definition}\label{d:main}
For each $s = 0, 1, \ldots, \kr$, consider the $(\kr-s+1) \times (\kc-s+1)$ matrix $\M_s$ with entries
\[ \M_s(i,j) = -\lambda \ \delta_{i,j} + {j-s \choose i-s} \sum_{v=0}^i (-1)^{i+v} {i \choose v} { \kr-v \choose \ell-v}{n-\kr-j+v \choose \kc-\ell-j+v}. \]
Here, the entries are (unconventionally, but harmlessly) indexed with $(s, s) \leq (i, j) \leq (\kr, \kc)$, and $\delta_{i,j}$ is the Kronecker delta function (which is $0$, unless $i = j$ when it is $1$).
\end{definition}

We have expanded this formula for ease of reference, independent from our development in this paper, but it may also be written more compactly as
\[ \M_s(i,j) = -\lambda \ \delta_{i,j} + {j-s \choose i-s} f_i(j; n, \kk, \ell), \]
and the reader may refer to Section~\ref{s:examples} for some simple formulas for $f_i(j; n, \kk, \ell)$ as well as some examples of $\M_s$ in various cases.

\begin{remark}\label{r:evs}
When $\kr = k = \kc$ and $\lambda$ is equal to the degree of $\G$, we find diagonal entries
\[ \M_s(i,i) = f_i(i; n, k, \ell) - { {n-k} \choose {k-\ell} }{k \choose \ell} \hspace{0.2in} (\text{for } s \leq i \leq k). \]
These are the eigenvalues of the ``classical'' Laplacian $\L$, being the diagonal entries of the triangular matrix $\P^{-1} \L \P$ by Theorem~\ref{t:p_matrix}.  Although in this presentation the $i$th eigenvalue, that we denote $e_i$, is interlaced among the $\M_s$ matrices for which $s \leq i$, the multiplicity of $e_i$ is 
$\mu_0 + ( \mu_1 - \mu_0 ) + (\mu_2 - \mu_1) + \cdots + (\mu_i - \mu_{i-1}) = \mu_i$,
as expected.
\end{remark}

\begin{figure}[h]
\centering
    \scalemath{0.7}{
    \begin{array}{c|cccc|ccccccc|cccccccccc}
    \textcolor{blue}{e_0} & \textcolor{blue}{f_{0,1}} & 0 & \cdots &  0 & \textcolor{blue}{f_{0,2}} & 0  & \cdots & \cdots &  \cdots&  \cdots& 0 & \textcolor{blue}{f_{0,3}} & 0 & \cdots & \cdots &  \cdots&  \cdots& \cdots & \cdots &  \cdots &0\\
    \hline
    \textcolor{blue}{0} & \textcolor{blue}{e_1} &0&\cdots&0&\textcolor{blue}{2f_{1,2}}& 0 & \cdots & \cdots &  \cdots&  \cdots& 0 &\textcolor{blue}{3f_{1,3}}& 0 & \cdots & \cdots &  \cdots& \cdots & \cdots &  \cdots &  \cdots& 0\\

    \vdots& 0& \textcolor{orange}{e_1} & & \vdots& 0&  \textcolor{orange}{f_{1,2}} & 0 & \cdots &  \cdots&  \cdots& 0 & 0&  \textcolor{orange}{f_{1,3}} & 0 & \cdots &  \cdots&  \cdots & \cdots &  \cdots&  \cdots& 0\\
    
    \vdots & \vdots & & \textcolor{orange}{\ddots} & 0 &\vdots & &\textcolor{orange}{\ddots}&&& & \vdots & \vdots& & \textcolor{orange}{\ddots} &&&&&&& \vdots\\
    
    0 & 0 &\cdots & 0 & \textcolor{orange}{e_1} & 0  & \cdots & 0 & \textcolor{orange}{f_{1,2}} & 0 & \cdots & 0 & 0  & \cdots & 0 & \textcolor{orange}{f_{1,3}} & 0 & \cdots & \cdots & \cdots & \cdots & 0\\
    
    \hline
    \textcolor{blue}{0} & \textcolor{blue}{0} & \cdots & \cdots & 0 & \textcolor{blue}{e_2} & 0  & \cdots & \cdots & \cdots & \cdots & 0 &\textcolor{blue}{3f_{2,3}}& 0 & \cdots & \cdots &  \cdots& \cdots & \cdots &  \cdots &  \cdots& 0\\

    \vdots & \vdots &&& \vdots & 0 & \textcolor{orange}{e_2} &&&&& \vdots & 0 & \textcolor{orange}{2f_{2,3}} & 0 & \cdots &  \cdots&  \cdots & \cdots &  \cdots&  \cdots& \vdots\\
    
    \vdots & \vdots &&& \vdots & \vdots && \textcolor{orange}{\ddots} &&&& \vdots & \vdots& & \textcolor{orange}{\ddots} &&&&&&& \vdots\\
    
    \vdots & \vdots &&& \vdots & \vdots &&& \textcolor{orange}{e_2} &&& \vdots & 0  & \cdots & 0 & \textcolor{orange}{2f_{2,3}} & 0 & \cdots & \cdots & \cdots & \cdots & 0\\
    
    \vdots & \vdots &&& \vdots & \vdots &&&& \textcolor{gray}{e_2} && \vdots & 0  & \cdots & \cdots & 0 & \textcolor{gray}{f_{2,3}} & 0 & \cdots & \cdots & \cdots & 0\\
    
    \vdots & \vdots &&& \vdots & \vdots &&&&& \ddots & 0 & \vdots &&&&& \ddots &&&& \vdots\\
    
    0 & 0 & \cdots &\cdots & 0 & 0  & \cdots & \cdots & \cdots & \cdots & 0 & \textcolor{gray}{e_2} & 0 &\cdots&\cdots&\cdots&\cdots&0& \textcolor{gray}{f_{2,3}} &0&\cdots& 0\\

    \hline
    \textcolor{blue}{0} & \textcolor{blue}{0} & \cdots &\cdots & 0 & \textcolor{blue}{0} & \cdots &\cdots &\cdots &\cdots &\cdots & 0 & \textcolor{blue}{e_3} & 0 &\cdots &\cdots &\cdots &\cdots & \cdots & \cdots & \cdots & 0\\

    \vdots & \vdots &&& \vdots & \vdots &&&&&& \vdots & 0 & \textcolor{orange}{e_3}&&&&&&&& \vdots\\

    \vdots & \vdots &&& \vdots & \vdots &&&&&& \vdots & \vdots && \textcolor{orange}{\ddots}&&&&&&& \vdots\\
    \vdots & \vdots &&& \vdots & \vdots &&&&&& \vdots & \vdots &&& \textcolor{orange}{e_3}&&&&&& \vdots\\
    \vdots & \vdots &&& \vdots & \vdots &&&&&& \vdots & \vdots &&&& \textcolor{gray}{e_3} &&&&& \vdots\\
    \vdots & \vdots &&& \vdots & \vdots &&&&&& \vdots & \vdots &&&&& \ddots&&&& \vdots\\
    \vdots & \vdots &&& \vdots & \vdots &&&&&& \vdots & \vdots &&&&&& \textcolor{gray}{e_3} &&& \vdots\\
    \vdots & \vdots &&& \vdots & \vdots &&&&&& \vdots & \vdots &&&&&&& \textcolor{black}{e_3} && \vdots\\
    \vdots & \vdots &&& \vdots & \vdots &&&&&& \vdots & \vdots &&&&&&&& \ddots& 0\\
    0 & 0 &\cdots&\cdots& 0 & 0 &\cdots&\cdots&\cdots&\cdots&\cdots& 0 & 0 &\cdots&\cdots&\cdots&\cdots&\cdots&\cdots&\cdots& 0& \textcolor{black}{e_3}\\
\end{array}  }
\caption{Schematic for $\E^{-1}\P^{-1}\L\P\E$ when $k=3$ with the $\M_s$ appearing as embedded sub-matrices}\label{f:main}
\end{figure}

\begin{example}
In Figure~\ref{f:main}, we show the $k=3$ case of the form we obtain for $\E^{-1} \P^{-1} \L \P \E$ after we have diagonalized each $\W_{i,j}$ block.  Here, we have $\M_0$ (a $4 \times 4$ matrix, appearing once) in blue; $\M_1$ (a $3 \times 3$ matrix, repeated $\mu_1 - 1$ times) in orange; $\M_2$ (a $2 \times 2$ matrix, repeated $\mu_2 - \mu_1$ times) in gray; and $\M_3$ (a $1 \times 1$ matrix, repeated $\mu_3 - \mu_2$ times) in black.  The $e_i$ appearing on the diagonal are the eigenvalues of $\L$.
\end{example}

As each $\M_s$ is an (upper-triangular) integer matrix, it has a diagonal form that can be achieved by multiplying on the left and right by, possibly different, unimodular matrices.  For example, we may compute its Smith normal form.  Let $\Delta(\M_s)$ be the multiset of entries from such a diagonal form; for example, we could compute the invariant factors.

\begin{theorem}\label{t:main}
Suppose the parameters $n$, $\kk = (k,k)$, and $\ell$ are fixed, with $n \geq 3k-1$.  Given a subset intersection graph $\G(n, k, \ell)$ and an integer $\lambda$, we find that the generalized Laplacian $\L = \A - \lambda \I$ has a diagonal form with entries
\[ \bigcup_{s = 0}^k \Delta(\M_s)^{ {n \choose s} - 2 {n \choose {s-1}} + {n \choose {s-2}}}. \]
Here, the exponent indicates that each element of $\Delta(\M_s)$ (which may itself appear multiple times) should be included in the diagonal form with the indicated multiplicity.

Using the language of abelian groups, we may equivalently write
\[
S(\L) \cong \bigoplus_{s=0}^{k} S(\M_s)^{ {n \choose s} - 2 {n \choose {s-1}} + {n \choose {s-2}}}.
\]
\end{theorem}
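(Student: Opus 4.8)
The plan is to realize at the matrix level the block decomposition pictured in Figure~\ref{f:main}, identifying the embedded copies of the $\M_s$ by an explicit permutation of rows and columns.

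First I would record the normal form produced by the first two steps. Combining Theorem~\ref{t:p_matrix} with Theorem~\ref{t:standard_bier} (which gives $E_i\W_{i,j}E_j^{-1} = D_{i,j}$ for $0 \le i \le j \le \frac{n+1}{3}$), conjugating $\L$ by the unimodular matrix $\P\E$, with $\P = \P_k$ and $\E = \E_k$, yields
\[
N := \E^{-1}\P^{-1}\L\P\E,
\]
a $(k+1)\times(k+1)$ block matrix whose $(i,j)$ block is $f_i(j)\,D_{i,j}$ for $i < j$, is $(f_i(i)-\lambda)\I_{\mu_i}$ for $i = j$ (using that $D_{i,i}$ is the $\mu_i\times\mu_i$ identity), and is the zero matrix for $i > j$. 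Since $\P\E$ is unimodular, $\L$ and $N$ are integrally equivalent, so it suffices to diagonalize $N$. The hypothesis $n \ge 3k-1$ is exactly what licenses Corollary~\ref{c:w_diagonal} and Theorem~\ref{t:standard_bier} here, and also guarantees $\mu_{-1}\le\mu_0\le\cdots\le\mu_k$.

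Next I would introduce coordinates on the index set of $N$. A row (equivalently column) of $N$ is a pair $(i,p)$ with $0\le i\le k$ and $1 \le p \le \mu_i$; to the position $p$ attach the \emph{layer} $s(p)$, defined by $\mu_{s(p)-1} < p \le \mu_{s(p)}$, and the \emph{offset} $p - \mu_{s(p)-1}$. Monotonicity of the $\mu$'s makes $s(p)$ well defined and forces $s(p) \le i$. Reading Definition~\ref{d:diag_for_w}, each $D_{i,j}$ (for $i \le j$) has the shape $[\,\mathrm{diag}\mid 0\,]$, with the diagonal entry occupying a layer-$s$ position equal to $\binom{j-s}{i-s}$. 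Comparing this with Definition~\ref{d:main}, I would establish the key identity: the $\bigl((i,p),(j,q)\bigr)$-entry of $N$ vanishes unless $p = q$, and when $p = q$, writing $s$ for the common layer $s(p)$, it equals $\binom{j-s}{i-s}f_i(j) - \lambda\,\delta_{i,j} = \M_s(i,j)$. Checking this amounts to matching the three cases $i<j$, $i=j$ (where $\binom{j-s}{i-s}=1$, independent of $s$), and $i>j$ (where $\binom{j-s}{i-s}=0$, consistent with the zero block), against the corresponding entries of $\M_s$.

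Finally, this identity shows that two positions $(i,p)$ and $(j,q)$ interact in $N$ only when $p = q$, hence only when they lie in a common class
\[
C_{s,o} = \{\,(i,\ \mu_{s-1}+o)\ :\ s \le i \le k\,\}, \qquad 0 \le s \le k,\quad 1 \le o \le \mu_s - \mu_{s-1},
\]
and that the submatrix of $N$ on $C_{s,o}\times C_{s,o}$ is precisely the $(k-s+1)\times(k-s+1)$ matrix $\M_s$. Since the $C_{s,o}$ partition the index set of $N$, permuting rows and columns to make each $C_{s,o}$ contiguous exhibits $N$ --- and therefore $\L$ --- as integrally equivalent to $\bigoplus_{s=0}^{k}\M_s^{\oplus(\mu_s - \mu_{s-1})}$. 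A diagonal form of a block-diagonal matrix is the multiset union of diagonal forms of its blocks, and $\mu_s - \mu_{s-1} = \binom{n}{s} - 2\binom{n}{s-1} + \binom{n}{s-2}$, which yields the stated diagonal form; the isomorphism for $S(\L)$ follows because $S(-)$ is additive over direct sums. The one genuinely delicate point is the key identity above: keeping the layer/offset bookkeeping straight while matching the nonzero pattern and entries of the diagonalized blocks $f_i(j)D_{i,j}$ to the entries of $\M_s$, including the boundary cases $i=j$ and $i>j$ and the degenerate layers where $\mu_s = \mu_{s-1}$ (in which $\M_s$ simply appears with multiplicity $0$).
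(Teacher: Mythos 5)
Your proposal is correct and follows essentially the same route as the paper's proof: conjugate by $\P\E$ to obtain the block form with blocks $f_i(j)D_{i,j}$, observe that positions interact only within the classes you call $C_{s,o}$ so that $N$ is a permutation-conjugate of $\bigoplus_{s}\M_s^{\oplus(\mu_s-\mu_{s-1})}$, and diagonalize each embedded copy of $\M_s$ independently. Your layer/offset bookkeeping just makes explicit the "embedding" observation that the paper states more informally (and illustrates in Figure~\ref{f:main}).
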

\begin{proof}
This follows directly from the development in the previous sections:  the expression in Definition~\ref{d:main} is the $s$th distinct diagonal entry from Corollary~\ref{c:w_diagonal} for each block, multiplied by the appropriate $f_i(j)$ from Definition~\ref{d:cf} as per Theorem~\ref{t:p_matrix}.  
The multiplicity formula is $\mu_s - \mu_{s-1}$ from Corollary~\ref{c:w_diagonal} where $\mu_s = {n \choose s} - {n \choose {s-1}}$.
See Figure~\ref{f:main} for a schematic in the case when $k=3$.

The key point is that each copy of each $\M_s$ matrix is embedded in $\E^{-1} \P^{-1} \L \P \E$ such that
\begin{itemize}
    \item The entries in each row and column of $\E^{-1} \P^{-1} \L \P \E$ lying outside of $\M_s$ are all zero, and
    \item the diagonal entries of $\M_s$ lie on the main diagonal of $\E^{-1} \P^{-1} \L \P \E$.
\end{itemize}
Therefore, we can embed the unimodular transformations (i.e. integral row/column operations) that diagonalize each copy of each $\M_s$ into a single pair of unimodular transformations that diagonalize $\E^{-1} \P^{-1} \L \P \E$.  (There is some freedom here that can be exploited depending on the graph parameters or desired formulas, but taking transformations obtained from the Smith normal form of $\M_s$ are always available as a default implementation.)
\end{proof}

\begin{remark}
Observe that finding a diagonal form for the ${n \choose k} \times {n \choose k}$ matrix $\L$ reduces to finding diagonal forms for $(k+1)$ matrices that are each no larger than $(k+1) \times (k+1)$.  This is a dramatic reduction in computational (and mathematical) complexity!
\end{remark}

\begin{remark}
In addition, the diagonal entries of the $\M_s$ are the eigenvalues of $\L$.  Thus the $\M_s$ can be viewed as a transparent and precise record of the additional information needed to proceed from diagonalizing over a field to diagonalize over the integers.
\end{remark}

\begin{corollary}\label{c:main}
Suppose the parameters $n$, $\kk = (\kr,\kc)$, and $\ell$ are fixed, with $\kr \leq \kc$ and $n \geq 3\kc-1$.
The subset intersection matrix $\A_{n, \kk, \ell}$ also has a diagonal form with entries just as in Theorem~\ref{t:main} using the generalized $\M_s$ matrices from Definition~\ref{d:main} with $\lambda = 0$.
\end{corollary}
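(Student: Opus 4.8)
The plan is to reduce Corollary~\ref{c:main} to the already-established machinery in essentially the same way that Theorem~\ref{t:main} was obtained, but tracking the fact that $\kr$ and $\kc$ need no longer coincide. First I would observe that by Theorem~\ref{t:p_matrix}, the product $(\P_{\kr})^{-1} \A_{n,\kk,\ell} \P_{\kc}$ is the block matrix whose $(i,j)$th block equals $f_i(j) \, \W_{i,j}$ for $(0,0) \leq (i,j) \leq (\kr,\kc)$. This is a rectangular block-upper-triangular matrix: there are $\kr+1$ block-rows indexed by $i$ and $\kc+1$ block-columns indexed by $j$, with $\W_{i,j}$ the zero matrix when $i > j$.

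Next I would apply Corollary~\ref{c:w_diagonal}: since $n \geq 3\kc - 1$, the hypothesis $0 \leq i \leq j \leq \frac{n+1}{3}$ holds for all blocks appearing (the largest relevant $j$ is $\kc$), so multiplying on the left by $\E_{\kr}^{-1}$ and on the right by $\E_{\kc}$ replaces each $\W_{i,j}$ block by its diagonal form $D_{i,j}$ without disturbing the block structure. The resulting matrix $\E_{\kr}^{-1} \P_{\kr}^{-1} \A \P_{\kc} \E_{\kc}$ then has, in each block-row $i$ and block-column $j$, the diagonal matrix $f_i(j) D_{i,j}$, whose nonzero diagonal entries are $\{ f_i(j) \binom{j-s}{i-s}^{\mu_s - \mu_{s-1}} : 0 \leq s \leq i \}$. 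Grouping the rows and columns by the index $s$ exactly as in the proof of Theorem~\ref{t:main}, the matrix decomposes (after a permutation of rows and columns) into diagonal blocks, each of which is a copy of the matrix $\M_s$ from Definition~\ref{d:main} with $\lambda = 0$ — note $\M_s(i,j) = \binom{j-s}{i-s} f_i(j)$ when $\lambda = 0$, which is precisely the nonzero part of the $s$-graded piece — appearing with multiplicity $\mu_s(n) - \mu_{s-1}(n)$ as a block-row multiplicity times $\mu_s(n) - \mu_{s-1}(n)$ as... more carefully, the multiplicity is $\mu_s - \mu_{s-1} = \binom{n}{s} - 2\binom{n}{s-1} + \binom{n}{s-2}$, matching the exponent in the statement of Theorem~\ref{t:main}. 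The only genuine difference from the $\kr = \kc$ case is that $\M_s$ is now the $(\kr - s + 1) \times (\kc - s + 1)$ rectangular matrix rather than a square one; its diagonal form $\Delta(\M_s)$ is still well-defined since any integer matrix has a Smith normal form.

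Finally, I would assemble the global diagonal form: since each copy of each $\M_s$ sits inside $\E_{\kr}^{-1} \P_{\kr}^{-1} \A \P_{\kc} \E_{\kc}$ with all entries in its rows and columns lying outside the block equal to zero, the unimodular row and column operations that bring each $\M_s$ to diagonal form can be performed independently and simultaneously, yielding a single pair of unimodular matrices that diagonalize the whole product; composing with $\P_{\kr}, \P_{\kc}, \E_{\kr}, \E_{\kc}$ (all unimodular) gives the claimed diagonal form for $\A_{n,\kk,\ell}$ itself. The main obstacle — really the only point requiring care — is bookkeeping: confirming that when $\kr < \kc$ the block-column indices $j$ can exceed $\kr$, so some $\M_s$ are genuinely rectangular with more columns than rows, and checking that the grading-by-$s$ argument and the multiplicity count go through unchanged in this asymmetric setting (in particular that the zero columns $D_{i,j}$ contributes are harmless, contributing only extra zero columns to the final diagonal form, which is consistent with $\binom{n}{\kc} \geq \binom{n}{\kr}$). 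Since $\lambda = 0$ kills the Kronecker-delta term, upper-triangularity of $\M_s$ is not even needed here, so the argument is if anything slightly cleaner than in Theorem~\ref{t:main}.
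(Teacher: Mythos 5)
Your proposal is correct and follows exactly the route the paper intends: Theorem~\ref{t:p_matrix} gives the rectangular block form, Corollary~\ref{c:w_diagonal} diagonalizes the $\W_{i,j}$ blocks via $\E_{\kr}^{-1}$ and $\E_{\kc}$, and the regrading-by-$s$ argument from the proof of Theorem~\ref{t:main} then yields $\mu_s-\mu_{s-1}$ independent copies of the rectangular $\M_s$. Your bookkeeping (in particular resolving the multiplicity to $\mu_s-\mu_{s-1}$ rather than its square, and noting that the surplus columns for $s>\kr$ contribute only zeros) is the right resolution of the only points requiring care.
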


\section{Some examples and applications}\label{s:examples}

\subsection{Matrices of the Johnson scheme}
When $\kr = k = \kc$, the matrix $\A_{n, k, \ell}$ of Corollary \ref{c:main} is the $(k-\ell)$-association matrix of the Johnson association scheme. Note that $\A_{n,k,0}$ is the adjacency matrix of the Kneser graph, $\A_{n,k, k-1}$ is the adjacency matrix of the Johnson graph, and $\A_{n,k,k}$ is the identity matrix. 

While the authors' primary motivation for this work was to study the classical graph Laplacian, there is also much interest in these association matrices and especially in properties of the algebra they generate (the Bose-Mesner algebra of the association scheme).  In \cite[Problem 3.7]{snf-inc}, Sin asks for a solution to the ``SNF problem'' for these matrices, which we have given in the theorems above and we examine more closely in this section.  

In fact, our technique extends to describe the Smith group of any integer matrix in the Bose-Mesner algebra of the Johnson scheme.  This is remarkable since knowing the Smith groups of two matrices does not normally yield any information about the Smith group of their sum.  

\begin{theorem}\label{t:j_scheme}
Let $B = \sum_{\ell = 0}^{k} b_{\ell} \A_{n,k,\ell}$, where the $b_{\ell}$ are integers.  Define the $(k-s+1) \times (k-s+1)$ matrix $\M_{s}(B)$, with $(i,j)$-entry equal to
\begin{equation}\label{eqn:scheme}
\binom{j-s}{i-s}\sum_{\ell = 0}^{k} b_{\ell} f_i(j; n, k, \ell).
\end{equation}
As in Definition \ref{d:main}, the matrix entry indices have the range $s \leq i, j \leq k$.

Then 
\[
S(B) \cong \bigoplus_{s=0}^{k} S(\M_s(B))^{ {n \choose s} - 2 {n \choose {s-1}} + {n \choose {s-2}}}.
\]
\end{theorem}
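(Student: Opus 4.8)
The plan is to rerun, essentially verbatim, the three-step program that produced Theorem~\ref{t:main}, the one new observation being that conjugation by the Bier matrix is a $\Z$-linear operation. Throughout I assume $n \geq 3k-1$, exactly as in Theorem~\ref{t:main}. \emph{Step one.} Since $\P_k$ depends only on $n$ and $k$ and not on $\ell$, Theorem~\ref{t:p_matrix} applies simultaneously to every summand:
\[ \P_k^{-1} \A_{n,k,\ell} \P_k \ = \ \big( f_i(j;n,k,\ell)\, \W_{i,j} \big)_{0 \leq i, j \leq k}. \]
By linearity of $M \mapsto \P_k^{-1} M \P_k$,
\[ \P_k^{-1} B \P_k \ = \ \sum_{\ell=0}^{k} b_\ell\, \P_k^{-1}\A_{n,k,\ell}\P_k \ = \ \big( g_i(j)\, \W_{i,j} \big)_{0 \leq i, j \leq k}, \qquad g_i(j) := \sum_{\ell=0}^{k} b_\ell\, f_i(j;n,k,\ell), \]
which is block upper triangular (indeed upper triangular, since each $\W_{i,i}$ is the identity) with scalar diagonal blocks $g_i(i)\,\W_{i,i}$; the $g_i(i)$ are the eigenvalues of $B$, with multiplicities.

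\emph{Step two.} Applying Corollary~\ref{c:w_diagonal}, conjugation by $\E_k$ simultaneously diagonalizes every $\W_{i,j}$, so that $\E_k^{-1}\P_k^{-1} B \P_k \E_k$ has $(i,j)$ block $g_i(j)\, D_{i,j}$, where $D_{i,j}$ is diagonal with entries $\binom{j-s}{i-s}$ of multiplicity $\mu_s - \mu_{s-1}$ for $0 \leq s \leq i$. \emph{Step three.} I would then reuse the combinatorial bookkeeping from the proof of Theorem~\ref{t:main} without change; that argument used only that the off-diagonal $(i,j)$ blocks are scalar multiples of $\W_{i,j}$ and that the diagonal blocks are scalar matrices, not the particular coefficients. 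Grouping rows and columns according to the ``level'' $s$ that labels the diagonal entries of the $D_{i,j}$, the matrix $\E_k^{-1}\P_k^{-1} B \P_k \E_k$ becomes, after a permutation of rows and columns, a direct sum over $s = 0, 1, \ldots, k$ of $\mu_s - \mu_{s-1}$ copies of the $(k-s+1)\times(k-s+1)$ matrix with $(i,j)$-entry $\binom{j-s}{i-s}\, g_i(j)$, which is precisely the $\M_s(B)$ of \eqref{eqn:scheme}. Each copy sits inside the big matrix with all other entries of its rows and columns equal to zero and with its diagonal entries on the main diagonal, so the unimodular row/column operations diagonalizing the individual copies patch together into a single pair of unimodular transformations diagonalizing $B$. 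Since $\mu_s - \mu_{s-1} = \binom{n}{s} - 2\binom{n}{s-1} + \binom{n}{s-2}$, collecting diagonal entries and passing to abelian groups yields the claimed $S(B) \cong \bigoplus_{s=0}^{k} S(\M_s(B))^{\binom{n}{s} - 2\binom{n}{s-1} + \binom{n}{s-2}}$.

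The only point that really warrants attention --- and it is conceptual rather than a genuine obstacle --- is Step one: that a single Bier matrix $\P_k$ simultaneously block-triangularizes \emph{all} the $\A_{n,k,\ell}$, which is what makes the passage to $B$ linear and hence the theorem possible at all, since Smith groups are not additive in general. Once that is granted, Steps two and three are word-for-word the arguments behind Theorem~\ref{t:main} with $f_i(j) - \lambda\,\delta_{i,j}$ replaced uniformly by $g_i(j)$. I would also double-check that the hypothesis $n \geq 3k-1$ required by Corollary~\ref{c:w_diagonal} is in force, and verify the elementary identity $\mu_s - \mu_{s-1} = \binom{n}{s} - 2\binom{n}{s-1} + \binom{n}{s-2}$.
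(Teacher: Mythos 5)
Your proposal is correct and follows essentially the same route as the paper's proof: conjugate $B$ by $\P_k\E_k$, use linearity to see that all the $\A_{n,k,\ell}$ share the same block form with coefficients $f_i(j;n,k,\ell)$ combining to $\sum_\ell b_\ell f_i(j;n,k,\ell)$, and then extract the Smith group exactly as in Theorem~\ref{t:main}. The paper's proof is just a terser statement of the same three steps, so nothing further is needed.
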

\begin{proof}
    Conjugating $B$ by the matrix $\P \E$ produces a linear combination of matrices that all share the same diagonal block-form.  Thus we get a diagonal block-form for $B$ where the block coefficients are the corresponding linear combinations in Equation~(\ref{eqn:scheme}) of the block coefficients $f_i(j; n, k, \ell)$ of the $\A_{n,k,\ell}$. 
 The same proof as of Theorem \ref{t:main} now extracts the Smith group.
\end{proof}

The strongest previous result on this topic may be \cite[Theorem 9]{wong}.  There the authors describe a diagonal form for certain matrices in the Bose-Mesner algebra of the Johnson scheme that satisfy a primitivity condition.  However, the association matrices themselves \textit{do not} usually satisfy this condition.

\begin{example}\label{ex:scheme}
Let $n=12, k=3$ and consider the association matrices $A_1 = \A_{12,3,1}$ and $A_2 = \A_{12,3,2}$.  Let $B = A_1 + 3A_2$.

We compute the $\M_{s}$ matrices for $B$:
\begin{align*}
\M_{0}(B) &= \M_{0}(A_1) + 3\M_{0}(A_2) = \begin{bmatrix}
    189 & 33 & 3 & 0\\
    0 & 57 & 22 & 3\\
    0 & 0 & 2 & 3\\
    0 & 0 & 0 & -6
\end{bmatrix} \\
\M_{1}(B) &= \M_{1}(A_1) + 3\M_{1}(A_2) = \begin{bmatrix}
    57 & 11 & 1\\
    0 & 2 & 2\\
    0 & 0 & -6
\end{bmatrix}\\
\M_{2}(B) &= \M_{2}(A_1) + 3\M_{2}(A_2) = \begin{bmatrix}
    2 & 1\\
    0 & -6
\end{bmatrix} \\
\M_{3}(B) &= \M_{3}(A_1) + 3\M_{3}(A_2) = \begin{bmatrix}
    -6
\end{bmatrix}
\end{align*}
Applying Theorem \ref{t:j_scheme} we get
\begin{align*}
S(B) &\cong S(\M_{0}(B))^{1} \oplus S(\M_{1}(B))^{10} \oplus S(\M_{2}(B))^{43} \oplus S(\M_{3}(B))^{100}\\
&\cong \left ( \Z/3\Z^{2} \oplus \Z/14364\Z \right )
\oplus \left ( \Z/2\Z \oplus \Z/342\Z \right )^{10} 
\oplus \left ( \Z/12\Z \right )^{43}
\oplus \left ( \Z/6\Z \right )^{100}.
\end{align*}
One can build directly the $220 \times 220$ matrix $B = A_{1} + 3A_{2}$ in Sage software \cite{sagemath} and see that we have indeed predicted the correct Smith group.
\end{example}

\subsection{Johnson graphs}

Although the general formula in Definition~\ref{d:main} may seem formidable, it turns out that the $f_i(j; n, k, \ell)$ summations often have a simple form for particular values of $\ell$.  For example, there are only two non-zero diagonals in the case when $\ell = k-1$.

\begin{lemma}\label{l:johnson_M}
If $\ell = k-1$, we have
\[ f_{i}(j) = \begin{cases} 
 (k-i)(n-k-i) - i   & \text{ if $i = j$ } \\
 (k-i)       & \text{ if $i = j-1$ } \\
 0       & \text{ otherwise. }
\end{cases} \]
\end{lemma}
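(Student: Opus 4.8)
The plan is to compute $f_i(j; n, k, k-1)$ directly from Definition~\ref{d:cf} by exploiting the fact that, when $\ell = k-1$, the factor $\binom{\kr - i}{\ell - i} = \binom{k-i}{k-1-i} = k-i$ inside $c_i(j)$ is linear in the summation index, and the other binomial factor $\binom{n - k - j + i}{k - (k-1) - j + i} = \binom{n-k-j+i}{1 - j + i}$ is only non-zero for a very restricted range of indices. First I would specialize the general definition: writing $c_i(j) = (k-i)\binom{n-k-j+i}{i-j+1}$, I observe that the lower index $i - j + 1$ of the final binomial coefficient is negative (hence the coefficient is zero) unless $i \geq j - 1$. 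Since we also always have $i \leq j$ by the indexing convention, the only surviving cases are $i = j$ and $i = j - 1$.

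Next I would handle these two cases separately. When $i = j - 1$, the sum $f_i(j) = \sum_{v=0}^{i} (-1)^{i+v}\binom{i}{v} c_v(j)$ has $c_v(j) = (k-v)\binom{n-k-j+v}{v - j + 1}$; here $v - j + 1 \leq i - j + 1 = 0$, so the binomial coefficient vanishes for all $v < i$ and equals $\binom{n-k-1}{0} = 1$ at $v = i$, leaving only the top term $f_i(j) = c_i(j) = (k-i)\cdot 1 = k-i$. When $i = j$, two terms survive: $v = i$ gives $c_i(i) = (k-i)\binom{n-k}{1} = (k-i)(n-k)$, and $v = i-1$ gives $c_{i-1}(i) = (k-i+1)\binom{n-k-1}{0} = k-i+1$, with sign $(-1)^{i + (i-1)} = -1$ and multiplicity $\binom{i}{i-1} = i$; all lower terms vanish because their final binomial coefficient has negative lower index. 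Thus $f_i(i) = (k-i)(n-k) - i(k-i+1)$. The last step is the routine algebraic check that $(k-i)(n-k) - i(k-i+1) = (k-i)(n-k-i) - i$, which follows by expanding: $(k-i)(n-k-i) = (k-i)(n-k) - i(k-i)$, so the difference is $-i(k-i+1) + i(k-i) = -i$, as claimed.

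I do not anticipate a serious obstacle here; the entire argument is a finite-support binomial computation. The only point requiring a little care is keeping track of the conventions $\binom{a}{b} = 0$ for $b < 0$ (and for $a < b$), which are exactly what forces all but one or two terms of the alternating sum to drop out, and making sure the edge behavior at small $n$ or small $k-i$ is consistent with those conventions. If anything is ``the hard part,'' it is merely being careful that the range restrictions on $v$ interact correctly with the range restriction $i \geq j-1$, but once the vanishing of $\binom{n-k-j+v}{v-j+1}$ for $v - j + 1 < 0$ is noted, the rest is bookkeeping.
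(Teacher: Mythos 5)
Your proposal is correct and follows essentially the same route as the paper's proof: specialize $c_i(j)$ to $(k-i)\binom{n-k-j+i}{i-j+1}$, observe that the negative lower index kills all terms with $v < j-1$, and evaluate the one or two surviving terms of the alternating sum, finishing with the same algebraic simplification $(k-i)(n-k) - i(k-i+1) = (k-i)(n-k-i) - i$.
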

\begin{proof}
We have

$c_i(j; n, k, k-1) = (k-i) {n-k-(j-i) \choose 1-(j-i)} 
= \begin{cases}
    (k-i) (n-k) & \text{ if $i = j$ } \\
    (k-i)  & \text{ if $i = j-1$ } \\
    0 & \text{ if $i < j-1$ } \\
    \end{cases}$

\noindent
so $f_i(j; n, k, k-1) = \sum_{v=0}^i (-1)^{i+v} {i \choose v} c_v(j; n, k, k-1)$ is $0$ unless $i \geq j-1$.
If $i = j-1$, we have
\[ f_{j-1}(j) = c_{j-1}(j) = (k-i), \]
while if $i = j$, we have
\[ f_j(j) = - j c_{j-1}(j) + c_j(j) = -j (k-(j-1)) +  (k-j) (n-k), \]
as claimed.
\end{proof}

\subsubsection{Johnson Laplacian matrix}

We find that the $\M_s$ matrices have a particularly simple structure in this case.

\begin{corollary}\label{c:johnson} 
If $\ell = k-1$ and $\lambda = d = (n-k)k$, the matrix $\M_s$ has diagonal entries $-i(n-(i-1))$ where $i$ runs from $s$ to $k$, and the entries on the super-diagonal correspond to values of the sequence $i(t-(i-1))$ as $i$ runs from $1$ to $t := k-s$.  All other entries of $\M_s$ are zero.
\end{corollary}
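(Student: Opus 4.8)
The plan is to specialize the general formula for $\M_s$ in Definition~\ref{d:main} to the parameters $\ell = k-1$ and $\lambda = d = (n-k)k$, using the explicit values of $f_i(j)$ already computed in Lemma~\ref{l:johnson_M}. Since $\M_s(i,j) = -\lambda\,\delta_{i,j} + \binom{j-s}{i-s} f_i(j; n, \kk, \ell)$ and Lemma~\ref{l:johnson_M} tells us $f_i(j)$ vanishes unless $j = i$ or $j = i+1$, the matrix $\M_s$ is automatically bidiagonal: the only nonzero entries lie on the main diagonal and the super-diagonal. So the whole computation reduces to evaluating those two diagonals.

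For the main diagonal, I would take $i = j$, so $\binom{j-s}{i-s} = \binom{0}{0} = 1$, giving $\M_s(i,i) = -\lambda + f_i(i) = -(n-k)k + \bigl[(k-i)(n-k-i) - i\bigr]$. Then it is a routine algebraic simplification to check this equals $-i(n - (i-1)) = -i(n-i+1)$: expand $(k-i)(n-k-i) = k(n-k) - ki - i(n-k) + i^2 = k(n-k) - i(n-k) - ki + i^2$, subtract $k(n-k)$ coming from $-\lambda$, and collect terms to get $-i(n-k) - ki + i^2 - i = -i(n-k+k - i + 1) = -i(n-i+1)$, as claimed. For the super-diagonal, take $j = i+1$, so $\binom{j-s}{i-s} = \binom{i+1-s}{i-s} = i+1-s$, and $f_i(i+1) = k-i$ by Lemma~\ref{l:johnson_M}; since $\delta_{i,j} = 0$ here, $\M_s(i,i+1) = (i+1-s)(k-i)$. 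Reindexing with $t := k-s$ and setting $m := i+1-s$ (so $i = m+s-1$ runs from $1$ to $k-s = t$ as $i$ runs from $s$ to $k-1$), we have $(i+1-s)(k-i) = m\,(k - (m+s-1)) = m\,(t - m + 1) = m(t - (m-1))$, which is exactly the stated sequence.

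There is essentially no obstacle here: the statement is a direct specialization of Definition~\ref{d:main} combined with Lemma~\ref{l:johnson_M}, and the only work is bookkeeping with binomial coefficients and a reindexing of the super-diagonal. The one point requiring a moment of care is the index shift in the super-diagonal description — matching the claim's phrasing ``$i$ runs from $1$ to $t := k-s$'' against the internal index $i$ of Definition~\ref{d:main} (which runs from $s$ to $k$) — but this is purely a change of variables. I would also note in passing that the hypothesis $n \geq 3k-1$ from Theorem~\ref{t:main} and Corollary~\ref{c:main} is in force so that the diagonal form of Definition~\ref{d:main} genuinely computes $S(\L)$; the corollary statement itself is just about the shape of the matrix $\M_s$ and holds as a formal consequence of the definitions.
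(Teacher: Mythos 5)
Your proposal is correct and matches the paper's (implicit) argument: the corollary is stated without proof precisely because it is the direct specialization of Definition~\ref{d:main} via Lemma~\ref{l:johnson_M} that you carry out, and your algebra on both diagonals and the reindexing $m = i+1-s$ check out (e.g.\ they reproduce the displayed $\M_s$ matrices for $k=3$, $\ell=2$).
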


By Remark~\ref{r:evs}, the diagonal entries $e_0 = 0, e_1 = n, e_2 = 2(n-1), e_3 = 3(n-2), e_4 = 4(n-3), \ldots$ are the eigenvalues of $\L$.  From Corollary~\ref{c:johnson}, it is a straightforward exercise to read off the entries of a diagonal form for the classical Laplacian of the Johnson graphs.  The next two theorems demonstrate this, in detail, for $k = 2$ and $k = 3$.

\medskip
{\bf The proof of Theorem~\ref{t:firstapp} introduces a template that we will use to obtain all of the results in this section.}

\begin{theorem}\label{t:firstapp}
The classical Laplacian of the $k=2$ Johnson graph $\G(n, 2, \ell = 1)$ for $n \geq 5$ has a diagonal form with:
\begin{center}
\begin{tabular}{ |c|c|c||c| } 
 \hline
 entry & with multiplicity & if $\ldots$ & or in terms of eigenvalues $\ldots$ \\ 
 \hline
$2(n-1)$ & ${n \choose 2} - 2n + 1$ & always & $[e_2]^{\mu_2 - \mu_1}$ \\

$2(n - 1)n$ & $n - 2$ & always & $[e_1 e_2]^{\mu_1 - 1}$ \\
$1$ & $n - 2$ & always & multiplicity $\mu_1-1$ \\

$1$ & $1$ & $n \equiv 1 \mod 2$ & \\
$4$ & $1$ & $n \equiv 1 \mod 2$ & \\

$2$ & $2$ & $n \equiv 0 \mod 2$ & \\

$0$ & $1$ & always & $[0]^1$ \\
 \hline
\end{tabular}
\end{center}
\end{theorem}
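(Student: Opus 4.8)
The plan is to apply Theorem~\ref{t:main} with $k = 2$, $\ell = 1$, and $\lambda = d = 2(n-2)$, which is legitimate because the hypothesis $n \geq 3k-1 = 5$ holds. This immediately reduces the computation of $S(\L)$ to the three small matrices $\M_0, \M_1, \M_2$ from Definition~\ref{d:main}, namely
\[ S(\L) \;\cong\; S(\M_0)^{1} \;\oplus\; S(\M_1)^{\,n-2} \;\oplus\; S(\M_2)^{\,\binom{n}{2}-2n+1}, \]
since the exponent $\binom{n}{s} - 2\binom{n}{s-1} + \binom{n}{s-2}$ equals $1$, $n-2$, $\binom{n}{2}-2n+1$ for $s = 0,1,2$, and these also equal $\mu_0$, $\mu_1 - \mu_0$, $\mu_2 - \mu_1$ with $\mu_s = \binom{n}{s} - \binom{n}{s-1}$. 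So the whole theorem comes down to finding a diagonal form of each $\M_s$ and matching multiplicities.

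Next I would write the blocks explicitly. By Corollary~\ref{c:johnson} (with $t = k-s$), each $\M_s$ is upper bidiagonal with diagonal entries $-i(n-(i-1))$ for $s \leq i \leq k$ and super-diagonal entries $i(t-(i-1))$ for $1 \leq i \leq t$; taking $k = 2$ gives
\[ \M_0 = \begin{pmatrix} 0 & 2 & 0 \\ 0 & -n & 2 \\ 0 & 0 & -2(n-1) \end{pmatrix},\qquad \M_1 = \begin{pmatrix} -n & 1 \\ 0 & -2(n-1) \end{pmatrix},\qquad \M_2 = \begin{pmatrix} -2(n-1) \end{pmatrix}. \]
(These may also be read off directly from Lemma~\ref{l:johnson_M} and Definition~\ref{d:main}.) The cases $\M_2$ and $\M_1$ are then routine: $S(\M_2) = \Z/2(n-1)\Z$, and because $\M_1$ contains the unit $1$ it is integrally equivalent to $\begin{pmatrix} 1 & 0 \\ 0 & 2n(n-1) \end{pmatrix}$, so $S(\M_1) = \Z/2n(n-1)\Z$.

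The one genuinely delicate step is $\M_0$. Its zero first column forces the rank to be $2$, so exactly one $0$ appears in any diagonal form, and the two nonzero diagonal entries $d_1 \mid d_2$ are pinned down by $d_1 = \gcd$ of all entries $= \gcd(2,n,2(n-1))$ and $d_1 d_2 = \gcd$ of all $2\times 2$ minors. The only nonzero $2\times 2$ minors use columns $2$ and $3$, with absolute values $4$, $4(n-1)$, and $2n(n-1)$; since $n(n-1)$ is a product of consecutive integers it is even, so $4 \mid 2n(n-1)$ and hence $d_1 d_2 = 4$. A parity split finishes it: if $n$ is odd then $d_1 = \gcd(2,n) = 1$, so $\M_0$ has diagonal form $\mathrm{diag}(1,4,0)$; if $n$ is even then all entries are even and $d_1 = 2 = d_2$, so $\M_0$ has diagonal form $\mathrm{diag}(2,2,0)$.

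Finally I would assemble the blocks with their multiplicities. From $\M_2$ we get the entry $2(n-1) = \lvert e_2\rvert$ with multiplicity $\binom{n}{2} - 2n + 1 = \mu_2 - \mu_1$; from $\M_1$ we get the entries $1$ and $2n(n-1) = \lvert e_1 e_2\rvert$ (cf.\ Remark~\ref{r:evs}) each with multiplicity $n - 2 = \mu_1 - 1$; and from $\M_0$ we get the entries $\{1,4\}$ (if $n$ odd) or $\{2,2\}$ (if $n$ even), once each, together with one $0$. Because $\M_1$ and $\M_2$ are nonsingular, that single $0$ is the only one, giving multiplicity $1$ always. This reproduces the table verbatim. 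The main obstacle is the $\M_0$ invariant-factor computation above — the parity dichotomy in $d_1$ and the even-ness of $n(n-1)$ that forces $d_1 d_2 = 4$; I would also flag that Theorem~\ref{t:main} furnishes a diagonal form of $\L$ rather than its Smith normal form, so the entries listed in the table need not be arranged into a single divisibility chain.
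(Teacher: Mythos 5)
Your proposal is correct and follows essentially the same route as the paper's proof: the same reduction via Theorem~\ref{t:main} to the explicit blocks $\M_0,\M_1,\M_2$, the same gcd-of-minors computation (yielding $\gcd(2,n)$ and $4$ for the $1\times 1$ and $2\times 2$ minors of $\M_0$), and the same parity split on $n$. Your closing caveat that the result is a diagonal form rather than a single divisibility chain is consistent with how the paper states the theorem.
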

\begin{proof}
In this case, the $\M_s$ matrices (with $\lambda = (n-2)2)$ are
\[ \M_0 = \begin{bmatrix} 0 & 2 & 0 \\ 0 & -n & 2 \\ 0 & 0 & -2n + 2 \end{bmatrix} 
    \hspace{0.1in}
 \M_1 = \begin{bmatrix} -n & 1 \\ 0 & -2n + 2 \end{bmatrix} 
    \hspace{0.1in}
 \M_2 = \begin{bmatrix} -2n + 2 \end{bmatrix}. \]

We use the standard fact that the Smith invariant factors of an integer matrix can be obtained as the pairwise consecutive ratios of the greatest common divisors of the $i \times i$ minors (i.e. determinants of submatrices) of the given matrix \cite{smith1861}.  Let $\GM(s, i)$ be the gcd of the $i \times i$ minors of $\M_s$.

As there are only two non-zero diagonals in each $\M_s$ by Corollary~\ref{c:johnson}, each $i \times i$ minor is simply a product of $i$ of entries from $\M_s$ where no two entries can appear in the same row or column.  These entries consist of integers from the superdiagonal, that we call {\bf degree zero}, as well as the eigenvalues from the diagonal which are linear functions of $n$.

Since the greatest common divisor (gcd) function is associative, we can take the gcd $X$ of the degree zero minors first, and then compare with expressions of $n$ that are not already divisible by $X$.  In this way, we find that:
\begin{itemize}
    \item $\GM(0, 1) = \gcd(2, -n)$
    \item $\GM(0, 2) = \gcd(4, 2(n-1)n) = 4$
    \item $\GM(0, 3) = \gcd(0, 0) = 0$

    \item $\GM(1, 1) = \gcd(1, 0) = 1$
    \item $\GM(1, 2) = \gcd(0, 2(n-1)n) = 2 (n - 1) n$

    \item $\GM(2, 1) = \gcd(0, -2 (n - 1)) = 2(n-1)$
\end{itemize}
Treating $n$ with cases to resolve ambiguity and taking pairwise ratios of these with multiplicities from Theorem~\ref{t:main} yields the results stated in the table.
\end{proof}

This theorem agrees with \cite[Corollary 9.1]{line}, where the authors study critical groups of line graphs.  As far as we know, the following $k=3$ result is new.

\begin{theorem}
The classical Laplacian of the $k=3$ Johnson graph $\G(n, 3, \ell = 2)$ for $n \geq 7$ has a diagonal form with:
\begin{center}
\begin{tabular}{ |c|c|c||c| } 
 \hline
 entry & with multiplicity & if $\ldots$ & or in terms of eigenvalues $\ldots$ \\
 \hline
 $3(n-2)$ & ${n \choose 3} - 2 {n \choose 2} + n$ & always & $[e_3]^{\mu_3 - \mu_2}$ \\

 $6(n-2)(n-1)$ & ${n \choose 2} - 2n + 1$ & always & $[e_2 e_3]^{\mu_2 - \mu_1}$ \\
$1$ & ${n \choose 2} - 2n + 2$ & always & multiplicity $\mu_2 - \mu_1$ \\

$6 (n - 2)(n - 1)n$ & $(n-2)$ & $n \equiv 1 \mod 2$ & $[e_1 e_2 e_3]^{\mu_1 - 1}$ \\
$1$ & $2(n-2)$ & $n \equiv 1 \mod 2$ & multiplicity $2\mu_1 - 2$ \\

$\frac{3}{2} (n - 2)(n - 1)n$ & $(n-2)$ & $n \equiv 0 \mod 2$ & $[\frac{1}{4} e_1 e_2 e_3]^{\mu_1 - 1}$ \\
$2$ & $2(n-2)$ & $n \equiv 0 \mod 2$ & multiplicity $2\mu_1 - 2$ \\

$36$ & $1$ & $n \equiv 2 \mod 3$ & \\
$1$ & $1$ & $n \equiv 2 \mod 3$ & \\

$12$ & $1$ & $n \not\equiv 2 \mod 3$ & \\
$3$ & $1$ & $n \not\equiv 2 \mod 3$ & \\

$0$ & $1$ & always & $[0]^1$ \\
 \hline
\end{tabular}
\end{center}
\end{theorem}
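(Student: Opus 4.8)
The plan is to follow, step for step, the template set in the proof of Theorem~\ref{t:firstapp}. First I would specialize Corollary~\ref{c:johnson} (equivalently Lemma~\ref{l:johnson_M}) to $k = 3$, $\ell = 2$, and $\lambda = d = 3(n-3)$, producing the four upper‑bidiagonal matrices
\[
\M_0 = \begin{bmatrix} 0 & 3 & 0 & 0 \\ 0 & -n & 4 & 0 \\ 0 & 0 & -2(n-1) & 3 \\ 0 & 0 & 0 & -3(n-2) \end{bmatrix}, \qquad
\M_1 = \begin{bmatrix} -n & 2 & 0 \\ 0 & -2(n-1) & 2 \\ 0 & 0 & -3(n-2) \end{bmatrix},
\]
\[
\M_2 = \begin{bmatrix} -2(n-1) & 1 \\ 0 & -3(n-2) \end{bmatrix}, \qquad
\M_3 = \begin{bmatrix} -3(n-2) \end{bmatrix},
\]
whose diagonals are $-e_i = -i(n-i+1)$ (the negated Laplacian eigenvalues) and whose superdiagonals are the ``degree zero'' constants $i(t-i+1)$ for $1 \le i \le t := 3-s$. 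By Theorem~\ref{t:main} (whose hypothesis $n \ge 3k-1$ holds once $n \ge 8$; the single boundary case $n = 7$ is a finite check) we have $S(\L) \cong \bigoplus_{s=0}^{3} S(\M_s)^{m_s}$ with $m_0 = 1$, $m_1 = n-2 = \mu_1 - 1$, $m_2 = \binom{n}{2} - 2n + 1 = \mu_2 - \mu_1$, and $m_3 = \binom{n}{3} - 2\binom{n}{2} + n = \mu_3 - \mu_2$; so it remains only to put each $\M_s$ into a diagonal form $\Delta(\M_s)$.

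For each $\M_s$ I would read off the invariant factors as consecutive ratios of the quantities $\GM(s,i) = \gcd(\text{all } i\times i \text{ minors of } \M_s)$, using the device from the proof of Theorem~\ref{t:firstapp}: first take the gcd of the ``degree zero'' minors (products of superdiagonal entries), then compare against the remaining minors, which are low-degree polynomials in $n$. The matrices $\M_3$ and $\M_2$ are immediate: $\Delta(\M_3) = \{3(n-2)\}$, and $\GM(2,1) = 1$ while $\GM(2,2) = \det\M_2 = 6(n-1)(n-2)$, so $\Delta(\M_2) = \{1,\ 6(n-1)(n-2)\}$. For $\M_1$ the residue of $n$ modulo $2$ enters: $\GM(1,1) = \gcd(2,n)$; the only degree-zero $2\times 2$ minor equals $4$, and since the minor $3n(n-2)$ is odd exactly when $n$ is odd, $\GM(1,2) = 4$ for $n$ even and $\GM(1,2) = 1$ for $n$ odd, while $\GM(1,3) = \det\M_1 = 6n(n-1)(n-2)$. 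Hence $\Delta(\M_1) = \{1,\ 1,\ 6n(n-1)(n-2)\}$ when $n$ is odd and $\Delta(\M_1) = \{2,\ 2,\ \tfrac{3}{2}n(n-1)(n-2)\}$ when $n$ is even. For $\M_0$ the first column vanishes, so the rank is $3$ and one entry of the diagonal form is $0$; here the residue of $n$ modulo $3$ enters through $\GM(0,2) = \gcd(3, n(n-1))$, while $\GM(0,1) = 1$ and $\GM(0,3) = 36$ (the degree-zero $3\times 3$ minor is $36$, and every $3\times 3$ minor is a multiple of $36$). Thus $\Delta(\M_0) = \{1,\ 1,\ 36,\ 0\}$ if $n \equiv 2 \pmod 3$ and $\Delta(\M_0) = \{1,\ 3,\ 12,\ 0\}$ otherwise.

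Finally I would assemble $\bigcup_{s=0}^{3} \Delta(\M_s)^{m_s}$. The two case splits—on $n \bmod 2$ coming from $\M_1$ and on $n \bmod 3$ coming from $\M_0$—are independent, so there are six residue classes of $n$ to record, and in each one collecting equal entries yields exactly the table; for instance the ``always'' entry $1$ with multiplicity $\binom{n}{2} - 2n + 2$ is the single $1$ contributed by $\M_0$ together with the $\mu_2 - \mu_1$ copies from $\M_2$. Rewriting the non-unit entries via $e_1 = n$, $e_2 = 2(n-1)$, $e_3 = 3(n-2)$ (so $6(n-1)(n-2) = e_2 e_3$, $6n(n-1)(n-2) = e_1 e_2 e_3$, and $\tfrac{3}{2}n(n-1)(n-2) = \tfrac14 e_1 e_2 e_3$) matches the last column.

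I expect no conceptual difficulty here—the whole argument is the bookkeeping of the template. The points needing genuine care are pinning down the borderline gcd values (that $\GM(1,2)$ equals exactly $4$, not $2$ or $8$, for every even $n$; that $\GM(0,3)$ equals exactly $36$; and that $\GM(0,2) = \gcd(3,n(n-1))$), and then combining the $n \bmod 2$ and $n \bmod 3$ cases without contaminating one another, with the lone $0$ from the rank drop in $\M_0$ placed correctly.
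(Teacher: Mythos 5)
Your proposal is correct and follows essentially the same route as the paper: the identical $\M_s$ matrices (via Lemma~\ref{l:johnson_M}/Corollary~\ref{c:johnson} with $\lambda=3(n-3)$), the same gcd-of-minors computation of each $\GM(s,i)$ with case splits on $n \bmod 2$ from $\M_1$ and $n \bmod 3$ from $\M_0$, and the same assembly via Theorem~\ref{t:main}. Your explicit evaluations ($\GM(0,3)=36$ always, $\GM(1,2)=4$ or $1$) agree with the paper's gcd expressions once simplified, and your remark about the $n=7$ boundary case is, if anything, more careful than the paper's treatment.
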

\begin{proof}
Here, we have (for $\lambda = (n-3)3$)
\[ \M_0 = \begin{bmatrix} 0 & 3 & 0 & 0 \\ 0 & -n & 4 & 0 \\ 0 & 0 & -2n + 2 & 3 \\ 0 & 0 & 0 & -3n + 6 \end{bmatrix} 
    \hspace{0.1in}
\M_1 = \begin{bmatrix} -n & 2 & 0 \\ 0 & -2n + 2 & 2 \\ 0 & 0 & -3n + 6 \end{bmatrix} \]
\[ \M_2 = \begin{bmatrix} -2n + 2 & 1 \\ 0 & -3n + 6 \end{bmatrix} 
    \hspace{0.1in}
\M_3 = \begin{bmatrix} -3n + 6 \end{bmatrix}, \]
and we proceed as in the previous proof.  We obtain
\begin{itemize}
    \item $\GM(0, 1) = \gcd(1, 0) = 1$
    \item $\GM(0, 2) = \gcd(3, 2(n-1)n)$
    \item $\GM(0, 3) = \gcd(36, 18(n - 2) (n - 1), -6 (n - 2) (n - 1) n)$
    \item $\GM(0, 4) = \gcd(0, 0) = 0$

    \item $\GM(1, 1) = \gcd(2, -n, -3(n-2))$
    \item $\GM(1, 2) = \gcd(4, 2 (n - 1) n, -2 n, 3 (n - 2) n, -6 (n - 2), 6 (n - 2) (n - 1) )$
    \item $\GM(1, 3) = \gcd(0, -6(n-2)(n-1)n)$

    \item $\GM(2, 1) = \gcd(1, 0) = 1$
    \item $\GM(2, 2) = \gcd(0, 6(n-2)(n-1))$

    \item $\GM(3, 1) = \gcd(0, -3 (m - 2))$
\end{itemize}
Here, we have cases based on $n$ mod $3$ to resolve $\GM(0,2)$ and based on $n$ mod $2$ to resolve $\GM(1,1)$, from which we may then conclude the stated results.
\end{proof}

\subsubsection{Johnson adjacency matrix}

\begin{theorem}
The adjacency matrix of the $k=2$ Johnson graph $\G(n, 2, \ell = 1)$ for $n \geq 5$ has a diagonal form with:
\begin{center}
\begin{tabular}{ |c|c|c||c| } 
 \hline
 entry & with multiplicity & if $\ldots$ & or in terms of eigenvalues $\ldots$ \\ 
 \hline
$2$ & $(n-2)(n-3)/2$ & always &  $[e_2]^{2\mu_{0} - \mu_{1} + \mu_{2}}$\\
$(n - 2)(n - 4)$ & $1$ & always & $[\frac{e_{0}e_{1}}{2}]^{\mu_{0}}$ \\
$2(n-4)$ & $n - 2$ & always & $[2e_{1}]^{\mu_{1}-\mu_{0}}$ \\
$1$ & $n-2$ & always & multiplicity $\mu_1 - \mu_0$ \\
 \hline
\end{tabular}
\end{center}
\end{theorem}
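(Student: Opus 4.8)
The plan is to run the template from the proof of Theorem~\ref{t:firstapp} with $\lambda = 0$, so that the generalized Laplacian is the adjacency matrix and Corollary~\ref{c:main} applies. Specializing Lemma~\ref{l:johnson_M} to $k = 2$, $\ell = 1$ gives $f_0(0) = 2(n-2)$, $f_1(1) = n - 4$, $f_2(2) = -2$, $f_0(1) = 2$, $f_1(2) = 1$, and $f_i(j) = 0$ otherwise, so Definition~\ref{d:main} yields
\[ \M_0 = \begin{bmatrix} 2(n-2) & 2 & 0 \\ 0 & n-4 & 2 \\ 0 & 0 & -2 \end{bmatrix}, \qquad \M_1 = \begin{bmatrix} n-4 & 1 \\ 0 & -2 \end{bmatrix}, \qquad \M_2 = \begin{bmatrix} -2 \end{bmatrix}. \]
Since $n \geq 5 = 3k - 1$, Theorem~\ref{t:main} (applied via Corollary~\ref{c:main}) gives $S(\A) \cong \bigoplus_{s=0}^{2} S(\M_s)^{m_s}$ with $m_0 = 1$, $m_1 = n - 2$, and $m_2 = \binom{n}{2} - 2n + 1 = \mu_2 - \mu_1$, and the remaining task is to pin down diagonal forms of the three small matrices $\M_s$.

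For this I would use the greatest-common-divisor-of-minors criterion, exactly as in Theorem~\ref{t:firstapp}: each $\M_s$ has only two nonzero diagonals, so every $i \times i$ minor is, up to sign, a product of $i$ of its entries lying in distinct rows and columns, and each $\GM(s, i)$ is read off directly. The block $\M_2$ has invariant factor $2$. In $\M_1$ the unit entry forces $\GM(1,1) = 1$, while $\GM(1,2) = 2(n-4)$ is nonzero for $n \geq 5$, so $\M_1$ has diagonal form $\diag(1, 2(n-4))$ for every $n$. For $\M_0$ one computes $\GM(0,3) = 4(n-2)(n-4)$, $\GM(0,2) = \gcd(4, 2(n-4))$, $\GM(0,1) = \gcd(2, n-4)$; thus the Smith normal form of $\M_0$ is $\diag(2,2,(n-2)(n-4))$ when $n$ is even and $\diag(1,2,2(n-2)(n-4))$ when $n$ is odd. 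A short check comparing Smith normal forms (the two matrices have equal dimension) shows that $\M_0$ is in fact integrally equivalent to $\diag(2, 2, (n-2)(n-4))$ in both parities; this matrix is itself in Smith normal form only for even $n$, but it is always a valid diagonal form.

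Finally I would assemble the blocks: $\M_0$ contributes the entries $2$, $2$, $(n-2)(n-4)$ (each once); $\M_1$ contributes $1$ and $2(n-4)$ each $m_1 = n-2$ times; and $\M_2$ contributes $2$ with multiplicity $m_2 = \binom{n}{2} - 2n + 1$. Collecting, the entry $2$ occurs $m_2 + 2 = \binom{n}{2} - 2n + 3 = (n-2)(n-3)/2$ times, the entry $(n-2)(n-4)$ occurs once, and the entries $2(n-4)$ and $1$ occur $n-2$ times each; a count gives $\binom{n}{2}$ entries in all. Rewriting the multiplicities via $\mu_0 = 1$, $\mu_1 = n-1$, $\mu_2 = \binom{n}{2} - n$ and the adjacency eigenvalues $e_0 = 2(n-2)$, $e_1 = n - 4$, $e_2 = -2$ of $\G(n,2,1)$ (Remark~\ref{r:evs}) then recovers the table.

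I expect the only genuine obstacle to be the transition through $\M_0$: its Smith normal form honestly depends on $n \bmod 2$, whereas the stated table has no case distinction, so one must verify that the single diagonal form $\diag(2,2,(n-2)(n-4))$ is legitimate for $\M_0$ at all $n$ (equivalently, that absorbing the extra two copies of $2$ contributed by $\M_0$ into the $2$'s coming from $\M_2$, and using $\Z/(ab)\Z \cong \Z/a\Z \oplus \Z/b\Z$ for coprime $a,b$ in the odd case, produces the same group for every $n$). Once that uniformity is established, the rest is the same bookkeeping already carried out in Theorem~\ref{t:firstapp}.
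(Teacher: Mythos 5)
Your proposal is correct and follows essentially the same route as the paper's proof: compute the same three $\M_s$ matrices from Lemma~\ref{l:johnson_M} with $\lambda=0$, diagonalize $\M_1$ and $\M_2$ immediately, and resolve $\M_0$ via gcds of minors, observing that although the Smith normal form of $\M_0$ depends on the parity of $n$, the assembled Smith group is the same in both cases. Your explicit reconciliation of the odd case via $\Z/2(n-2)(n-4)\Z \cong \Z/2\Z \oplus \Z/(n-2)(n-4)\Z$ is exactly the check the paper leaves implicit.
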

\begin{proof} 
 Using Lemma \ref{l:johnson_M} to compute the $\M_s$ matrices (with $\lambda = 0$), we get
\begin{align*}
\M_0 &= \begin{bmatrix} 2(n-2) & 2 & 0 \\ 0 & n-4 & 2 \\ 0 & 0 & -2 \end{bmatrix} \sim \begin{bmatrix} 2(n-2) & 2 & 0 \\ 0 & n-4 & 0 \\ 0 & 0 & 2 \end{bmatrix}\\
 \M_1 &= \begin{bmatrix} n-4 & 1 \\ 0 & -2 \end{bmatrix} \sim \begin{bmatrix} 1 & 0 \\ 0 & 2(n-4) \end{bmatrix}\\
 \M_2 &= \begin{bmatrix} -2 \end{bmatrix}.
 \end{align*}

Only $\M_{0}$ requires any analysis, since the Smith groups of $\M_1$ and $\M_2$ are evident.
\begin{itemize}
    \item $\GM(0, 1) = \gcd(2, n)$
    \item $\GM(0, 2) = \gcd(4(n-2), 4, 2(n-4), 2(n-2)(n-4))$
    \item $\GM(0, 3) =  4(n-2)(n-4)$
\end{itemize}
Even though the gcds above depend on the parity of $n$, in both cases the Smith group turns out to be isomorphic to
\[
S(\A_{n,2,1}) \cong \left ( \Z/2\Z \right )^{(n-2)(n-3)/2} \oplus \Z/(n-2)(n-4)\Z \oplus \left ( \Z/2(n-4)\Z \right )^{n-2}.
\]
\end{proof}

This theorem agrees with the result \cite[Theorem SNF3]{brouwer:prank}.  We believe the next result is a new one.

\begin{theorem}
The adjacency matrix of the $k=3$ Johnson graph $\G(n, 3, \ell = 2)$ for $n \geq 7$ has a diagonal form with:
\begin{center}
\begin{tabular}{ |c|c|c||c| } 
 \hline
 entry & with multiplicity & if $\ldots$ & or in terms of eigenvalues $\ldots$ \\ 
 \hline
$3(n-7)$ & $\binom{n}{2} - 2n + 1$ & always & $[e_{2}e_{3}]^{\mu_{2} - \mu_{1}}$\\
$3(2n-9)(n-7)$ & $n-2$ & always & $[e_{1}e_{2}e_{3}]^{\mu_{1}-\mu_{0}}$ \\
$3$ & $\binom{n}{3} - 2\binom{n}{2} + n + 1$ & always & $[e_3]^{\mu_3 - \mu_2 + \mu_0}$ \\
$3(n-3)(n-7)(2n-9)/X, X$ & $1$ & always & $[\frac{1}{X}e_{0}e_{1}e_{2}]^{\mu_{0}}, [X]^{\mu_{0}}$ \\
$1$ & $\binom{n}{2}-2$ & always & $\mu_2 + \mu_1 - \mu_0$ \\
 \hline
\end{tabular}
\end{center}
where 
\[
X = \gcd(3(n-3)(2n-9), (n-7)(n-3)(2n-9), 12, 2n(n-7), 3(n-7)).
\]
\end{theorem}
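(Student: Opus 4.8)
The plan is to run the template from the proof of Theorem~\ref{t:firstapp}. By Corollary~\ref{c:main}, the adjacency matrix $\A_{n,3,3,2}$ of $\G(n,3,2)$ has the diagonal form of Theorem~\ref{t:main} taken with $\lambda=0$, and by Lemma~\ref{l:johnson_M} (with $k=3$ and $\ell=k-1=2$) the matrices $\M_s$ of Definition~\ref{d:main} become explicit upper-bidiagonal matrices: the diagonal entries are $f_i(i)=(3-i)(n-3-i)-i$, namely $3(n-3),\,2n-9,\,n-7,\,-3$, and the super-diagonal entry is $\M_s(i,i+1)=\binom{i+1-s}{i-s}(3-i)=(i+1-s)(3-i)$. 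Writing these out yields $\M_0$ (size $4\times 4$, super-diagonal $3,4,3$), $\M_1$ (size $3\times 3$, super-diagonal $2,2$), $\M_2$ (size $2\times 2$, super-diagonal $1$), and $\M_3=[-3]$.

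Next I would compute a diagonal form for each $\M_s$ via consecutive ratios of gcds of minors, exactly as in Theorem~\ref{t:firstapp}. For $\M_3$ this is immediate. For $\M_2$ and $\M_1$ the gcd of all entries is $1$ (the super-diagonal entries $1$, respectively $2$, are coprime to the corner $-3$), so the leading invariant factors are $1$; for $\M_1$ one checks in addition that the gcd of the $2\times 2$ minors is $1$ (it contains the minors $4$ and $-3(2n-9)$, and $3(2n-9)$ is odd), after which the remaining invariant factors are forced by the determinants, giving $S(\M_2)\cong\Z/3(n-7)\Z$ and $S(\M_1)\cong\Z/3(2n-9)(n-7)\Z$.

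The crux is the $4\times 4$ upper-bidiagonal matrix $\M_0$. Here $\GM(0,1)=\gcd(3,4,\dots)=1$ and $\GM(0,4)=|\det\M_0|=9(n-3)(2n-9)(n-7)$, so what remains is the middle structure. Because $\M_0$ is bidiagonal, each $j\times j$ minor is (up to sign) a product of $j$ entries taken from distinct rows and columns, so one can list the $2\times 2$ and $3\times 3$ minors explicitly and compute their gcds; the upshot is that $\M_0$ is integrally equivalent to $\diag\bigl(1,\ 3,\ X,\ 3(n-3)(n-7)(2n-9)/X\bigr)$, where $X$ is the gcd displayed in the statement (and the divisibility $X\mid 3(n-3)(n-7)(2n-9)$ makes the last entry an integer). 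Verifying this equivalence — that the proposed diagonal matrix has the same gcds of $j\times j$ minors, equivalently the same Smith normal form, as $\M_0$ — forces a split on $n\bmod 2$ and $n\bmod 3$, since those residues decide how the factors of $2$ and $3$ appearing in the minors interact. Note that this diagonal form need not be listed in divisibility order.

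Finally, assemble everything with Theorem~\ref{t:main}: $S(\A_{n,3,3,2})\cong\bigoplus_{s=0}^{3}S(\M_s)^{\mu_s-\mu_{s-1}}$ with $\mu_s=\binom{n}{s}-\binom{n}{s-1}$. Collecting equal entries from the four diagonal forms and rewriting each multiplicity $\mu_s-\mu_{s-1}$ in binomial form reproduces the table: the copies of $3$ are the $\mu_3-\mu_2$ from $\M_3$ together with the single $3$ from $\M_0$, giving $\binom{n}{3}-2\binom{n}{2}+n+1$ of them; the copies of $1$ are $\mu_2-\mu_1$ from $\M_2$, two copies of $\mu_1-\mu_0$ from $\M_1$, and one from $\M_0$, for a total of $\binom{n}{2}-2$; and $3(n-7)$, $3(2n-9)(n-7)$, $X$, and $3(n-3)(n-7)(2n-9)/X$ carry multiplicities $\mu_2-\mu_1$, $\mu_1-\mu_0$, $\mu_0$, and $\mu_0$ respectively. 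The main obstacle is the $\M_0$ step: not the enumeration of minors, which is routine, but confirming that the compact gcd $X$ is correct in every residue class and that the proposed four-element diagonal matrix genuinely shares the Smith normal form of $\M_0$.
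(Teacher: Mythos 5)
Your setup is correct and matches the paper's: the matrices $\M_s$ you write down via Lemma~\ref{l:johnson_M} (diagonal entries $3(n-3)$, $2n-9$, $n-7$, $-3$, with super-diagonals $3,4,3$; then $2,2$; then $1$) are exactly the paper's, your treatment of $\M_1$, $\M_2$, $\M_3$ is sound, and your final bookkeeping of multiplicities via Theorem~\ref{t:main} reproduces the table exactly. The gap is the one you flag yourself: the diagonal form of $\M_0$, which is where $X$ actually comes from, is asserted rather than derived. You state that ``the upshot is'' $\M_0\sim\diag\bigl(1,3,X,Y\bigr)$ with $Y=3(n-3)(n-7)(2n-9)/X$, but nothing in your argument produces the five specific polynomials inside the displayed gcd, and verifying your claim by comparing $\GM(0,2)$ and $\GM(0,3)$ of the full $4\times4$ matrix $\M_0$ against $\gcd(3,X,Y)$ and $\gcd(3X,3Y,XY)$ (the corresponding quantities for your candidate diagonal matrix, which is not in divisibility order) is a genuinely nontrivial computation over residue classes of $n$ that you have not carried out. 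Since identifying $X$ is the entire non-routine content of the theorem, this is the missing heart of the proof.

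The paper's proof sidesteps this by performing integral row and column operations on $\M_0$ first: it shows
$\M_0 \sim \left[\begin{smallmatrix} (n-3)(2n-9) & 2n & 4 & 0\\ 0 & 3 & 0 & 0\\ 0 & 0 & n-7 & 0\\ 0 & 0 & 0 & 3 \end{smallmatrix}\right]$,
splits off one diagonal entry $3$, and is left with a $3\times3$ matrix $\M_{0'}$ whose nonzero $2\times2$ minors are precisely the five terms $3(n-3)(2n-9)$, $(n-7)(n-3)(2n-9)$, $12$, $2n(n-7)$, $3(n-7)$. Hence $X:=\GM(0',2)$ equals the displayed gcd by inspection, while $\GM(0',1)=1$ and $\GM(0',3)=3(n-3)(n-7)(2n-9)$, so $\M_{0'}\sim\diag\bigl(1,X,3(n-3)(n-7)(2n-9)/X\bigr)$ with no case analysis on $n$ anywhere (the point of leaving $X$ as a gcd is exactly to absorb it). To complete your argument you should either reproduce such a preliminary reduction, or actually enumerate the minors of $\M_0$ and prove the resulting gcds agree with those of $\diag(1,3,X,Y)$; as written, the step you call ``the main obstacle'' is the theorem.
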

\begin{proof} 
 We again use Lemma \ref{l:johnson_M} to calculate the $\M_s$ matrices (with $\lambda = 0$).  Instead of going straight to gcds of minors, we try playing with some integral row/column operations first.  It turns out that we can get all but $\M_{0}$ directly into diagonal form.
\begin{align*}
\M_0 &= \begin{bmatrix}
   3(n-3) & 3& 0& 0\\
    0& 2n-9& 4& 0\\
    0& 0& n-7& 3\\
    0& 0& 0& -3
\end{bmatrix} \sim \begin{bmatrix}
   (n-3)(2n-9) & 2n& 4& 0\\
    0& 3& 0& 0\\
    0& 0& n-7& 0\\
    0& 0& 0& 3
\end{bmatrix}\\
\M_1 &= \begin{bmatrix} 2n-9 & 2 & 0 \\ 0 & n-7 & 2 \\ 0 & 0 & -3 \end{bmatrix} \sim \begin{bmatrix} 1 & 0 & 0 \\ 0 & 1 & 0 \\ 0 & 0 & 3(2n-9)(n-7) \end{bmatrix}\\
 \M_2 &= \begin{bmatrix} n-7 & 1 \\ 0 & -3 \end{bmatrix} \sim \begin{bmatrix} 1 & 0 \\ 0 & 3(n-7) \end{bmatrix}\\
 \M_3 &= \begin{bmatrix} -3 \end{bmatrix}.
 \end{align*}

If we set
\[
\M_{0^{\prime}} = \begin{bmatrix}
   (n-3)(2n-9) & 2n& 4\\
    0& 3& 0\\
    0& 0& n-7
\end{bmatrix},
\] then we have
\begin{align*}
S(\A_{n,3,2}) &\cong S(\M_{0})^{\mu_{0}-\mu_{-1}} \oplus S(\M_{1})^{\mu_{1}-\mu_{0}} \oplus S(\M_{2})^{\mu_{2}-\mu_{1}} \oplus S(\M_{3})^{\mu_{3}-\mu_{2}}\\ 
&\cong S\left ( \M_{0^{\prime}} \right ) \oplus \Z/3\Z 
\oplus \left ( \Z/3(2n-9)(n-7)\Z \right )^{n-2} \\
&\qquad \oplus \left ( \Z/3(n-7)\Z \right )^{\binom{n}{2} - 2n + 1} \oplus \left ( \Z/3\Z \right )^{\binom{n}{3} - 2\binom{n}{2} + n}. 
\end{align*}

Denoting by $\GM(0^{\prime},i)$ the gcd of the $i \times i$ minors of $\begin{bmatrix}
   (n-3)(2n-9) & 2n& 4\\
    0& 3& 0\\
    0& 0& n-7
\end{bmatrix}$, we observe
\begin{itemize}
    \item $\GM(0^{\prime}, 1) = 1$
    \item $\GM(0^{\prime}, 2) = \gcd(3(n-3)(2n-9), (n-7)(n-3)(2n-9), 12, 2n(n-7), 3(n-7))$
    \item $\GM(0^{\prime}, 3) = 3(n-3)(n-7)(2n-9)$.
\end{itemize}
Setting $X = \GM(0^{\prime}, 2)$ we therefore have
\[
\M_{0^{\prime}} \sim \begin{bmatrix}
   1 & 0& 0\\
    0& X& 0\\
    0& 0& 3(n-3)(n-7)(2n-9)/X
\end{bmatrix}.
\]
The theorem follows.  
\end{proof}
We note that the value of $X$ in the above theorem depends only on $n \pmod{12}$.

\subsection{A non-square subset inclusion matrix}

As a demonstration of our results in the case when $\A$ is not square, consider the subset inclusion matrix with $(\kr,\kc) = (2, 3)$ and $\ell = 1$.

\begin{theorem}
The subset intersection matrix $\A_{n, 2, 3, 1}$ for $n \geq 5$ has a diagonal form with:
\begin{center}
\begin{tabular}{ |c|c|c| } 
 \hline
 entry & with multiplicity & if $\ldots$ \\
 \hline
 $2$ & ${n \choose 2} - 2n + 1$ & always \\
 $2(n - 6)$ & $n - 2$ & always \\
 $1$ & $n - 1$ & always \\
 $(n-3)(n-6)$ & $1$ & if $3 | n$ \\
 $6$ & $1$ & if $3 | n$ \\
 $3(n-3)(n-6)$ & $1$ & if $3 \not | n$ \\
 $2$ & $1$ & if $3 \not | n$ \\
 \hline
\end{tabular}
\end{center}
\end{theorem}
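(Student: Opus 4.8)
The plan is to specialize the three-step construction to $(\kr,\kc,\ell)=(2,3,1)$ with $\lambda=0$ and then invoke Corollary~\ref{c:main}. (The hypotheses of that corollary are met once $n\ge 8$; the remaining values $n=5,6,7$ form a finite check that can be done by hand or by machine as in Example~\ref{ex:scheme}, and we suppress them here.)

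\emph{Step 1: write down the $\M_s$.} With $\kr=2,\kc=3,\ell=1$ the coefficients of Definition~\ref{d:cf} satisfy $c_v(j)=0$ for $v\ge 2$, so each $f_i(j)$ is a two- or three-term alternating sum of the single binomials $c_0(j)=2\binom{n-2-j}{2-j}$ and $c_1(j)=\binom{n-1-j}{3-j}$. Evaluating these together with the prefactors $\binom{j-s}{i-s}$ from Definition~\ref{d:main} gives
\[
\M_0=\begin{bmatrix} (n-2)(n-3) & 2(n-3) & 2 & 0\\ 0 & \tfrac{(n-3)(n-6)}{2} & 2(n-5) & 3\\ 0 & 0 & -2(n-4) & -6\end{bmatrix},\quad \M_1=\begin{bmatrix}\tfrac{(n-3)(n-6)}{2} & n-5 & 1\\ 0 & -2(n-4) & -4\end{bmatrix},\quad \M_2=\begin{bmatrix}-2(n-4) & -2\end{bmatrix}.
\]
By Corollary~\ref{c:main} a diagonal form of $\A_{n,2,3,1}$ has entries $\Delta(\M_0)\cup\Delta(\M_1)^{\,n-2}\cup\Delta(\M_2)^{\,\binom n2-2n+1}$ (together with $\binom n3-\binom n2$ further zeros), the exponents being $\binom ns-2\binom n{s-1}+\binom n{s-2}$ for $s=0,1,2$.

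\emph{Step 2: diagonalize each $\M_s$.} The matrices $\M_1$ and $\M_2$ are diagonalized directly, as in the templates of Section~\ref{s:examples}: column operations reduce $\M_2$ to $[\,2\ \ 0\,]$, so $\Delta(\M_2)=\{2\}$; using the unit entry of $\M_1$ to clear its first row and then cleaning up yields $\M_1\sim\diag\big(1,\,2(n-6)\big)$, so $\Delta(\M_1)=\{1,\,2(n-6)\}$. For $\M_0$ I would use the classical description of the invariant factors as consecutive ratios of gcds of minors (as in the proof of Theorem~\ref{t:firstapp}). Writing $\GM(0,i)$ for the gcd of the $i\times i$ minors of $\M_0$: one has $\GM(0,1)=1$ because the entries $2$ and $3$ both occur; the four $3\times3$ minors equal $\pm(n-3)(n-6)$ times one of $(n-2)(n-3)(n-4)$, $3(n-2)(n-3)$, $6(n-2)$, $6$, whose gcd is $6$, so $\GM(0,3)=6(n-3)(n-6)$; and $\GM(0,2)$ divides $6$, since the minor on rows $1,2$ and columns $3,4$ equals $6$. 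Reducing $\M_0$ modulo $2$ shows every $2\times2$ minor is even, while reducing modulo $3$ shows that every $2\times2$ minor is divisible by $3$ exactly when $3\mid n$ (when $3\mid n$ only the third column of $\M_0$ survives mod $3$, forcing all $2\times2$ minors to vanish; when $3\nmid n$ one exhibits a $2\times2$ minor that is a unit mod $3$). Hence $\GM(0,2)=6$ if $3\mid n$ and $\GM(0,2)=2$ otherwise, and taking consecutive ratios gives $\Delta(\M_0)=\{1,\,6,\,(n-3)(n-6)\}$ when $3\mid n$ and $\Delta(\M_0)=\{1,\,2,\,3(n-3)(n-6)\}$ when $3\nmid n$.

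\emph{Step 3: assemble.} Substituting into $\Delta(\M_0)\cup\Delta(\M_1)^{\,n-2}\cup\Delta(\M_2)^{\,\binom n2-2n+1}$ and collecting equal values — in particular the single $1$ from $\Delta(\M_0)$ together with the $n-2$ copies of $1$ from $\Delta(\M_1)^{\,n-2}$ combine to give $1$ with multiplicity $n-1$ — reproduces exactly the table. The main obstacle is Step 2 for $\M_0$, and within it the determination of $\GM(0,2)$: everything else is a routine specialization of the general theory or a one-line matrix reduction, but it is precisely the gcd of the $2\times2$ minors of $\M_0$ that produces the dichotomy on $n\bmod 3$ in the statement, and pinning it down requires the modular computation described above.
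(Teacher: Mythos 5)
Your proposal is correct and follows essentially the same route as the paper: the same $\M_0,\M_1,\M_2$ matrices, the same gcd-of-minors computation, and the same resolution of $\GM(0,2)\in\{2,6\}$ via the case $n\bmod 3$. If anything you are slightly more careful than the paper in flagging that Corollary~\ref{c:main} formally requires $n\ge 3\kc-1=8$, so that $n=5,6,7$ need a separate finite verification.
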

\begin{proof}
The $\M_s$ matrices are ($\lambda = 0$):
\[ \M_0 = \begin{bmatrix} (n-2)(n-3) & 2(n-3) & 2 & 0 \\ 0 & \frac{1}{2}(n-3)(n-6) & 2(n-5) & 3 \\ 0 & 0 & -2(n-4) & -6 \end{bmatrix}  \]
\[ \M_1 = \begin{bmatrix} \frac{1}{2}(n-3)(n-6) & (n-5) & 1 \\ 0 & -2(n-4) & -4 \end{bmatrix}  \]
\[ \M_2 = \begin{bmatrix} -2(n-4) & -2 \end{bmatrix}  \]
and we have
\begin{itemize} 
    \item $\GM(0,1) = 1$
    \item $\GM(0,3) = 6(n - 3)(n - 6)$
    \item $\GM(1,1) = 1$
    \item $\GM(1,2) = \gcd( -(n - 3)(n - 4)(n - 6), -2(n - 3)(n - 6), -2(n - 6) ) = 2(n-6)$
    \item $\GM(2,1) = 2$.
\end{itemize}

Examining the minors in $\GM(0,2)$, we note that all of them are divisible by $2$ and that $6$ is itself a minor, so $\GM(0,2)$ must be $2$ or $6$.
If $n$ is divisible by $3$, we get $6$, otherwise $2$.  Therefore, we obtain the result.
\end{proof}

Note that this agrees with (and also refines for the ``$\kc=3$'' case) \cite[Theorem 4.1]{ds}.

\subsection{Kneser graphs}

When $\ell = 0$, we obtain the following simple form for the $f_i(j)$.

\begin{lemma}\label{l:f_kneser}
If $\ell = 0$, we have
\[ f_{i}(j) = (-1)^i {n - k - j \choose k - j}. \]
\end{lemma}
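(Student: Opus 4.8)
The plan is to substitute $\ell = 0$ directly into the defining formula for $c_v(j;n,\kk,\ell)$ and observe that almost all terms in the alternating sum for $f_i(j)$ collapse. Recall from Definition~\ref{d:cf} that (with $\kr = k = \kc$ in this Kneser-graph setting)
\[
c_v(j; n, k, 0) = \binom{k-v}{0-v}\binom{n-k-j+v}{k-j+v}.
\]
First I would note that the leading binomial $\binom{k-v}{-v}$ is governed entirely by the standing convention $\binom{a}{b} = 0$ whenever $b < 0$: for every $v \geq 1$ we have $-v < 0$, so $\binom{k-v}{-v} = 0$ and hence $c_v(j;n,k,0) = 0$, whereas for $v = 0$ we get $\binom{k}{0} = 1$, so $c_0(j;n,k,0) = \binom{n-k-j}{k-j}$.

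Plugging this into $f_i(j) = \sum_{v=0}^{i} (-1)^{i+v}\binom{i}{v} c_v(j;n,k,0)$, every summand with $v \geq 1$ vanishes, leaving only the $v=0$ term, which is $(-1)^{i}\binom{i}{0} c_0(j;n,k,0) = (-1)^{i}\binom{n-k-j}{k-j}$. This is exactly the claimed formula, so the proof is essentially a one-line substitution once the combinatorial identity machinery of Definition~\ref{d:cf} is in place.

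There is no real obstacle here; the only point requiring a small amount of care is the degenerate behaviour of the binomial coefficients at the boundary (e.g.\ ensuring the convention $\binom{a}{b}=0$ for $b<0$ is the one intended, and that the formula is being applied with $\kr=\kc=k$ as in the $\G(n,k,\ell)$ case). I would simply remark on this and conclude.
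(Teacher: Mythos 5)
Your proof is correct and is essentially identical to the paper's: both observe that $\binom{\kr-v}{\ell-v}=\binom{k-v}{-v}$ vanishes for $v\geq 1$ when $\ell=0$, so $c_v(j;n,k,0)$ is nonzero only for $v=0$, and the alternating sum defining $f_i(j)$ collapses to its $v=0$ term. No further comment is needed.
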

\begin{proof}
Referring back to Definition~\ref{d:cf}, we have
\[ c_i = c_i(j; n, k, 0) = \begin{cases}
    {n-k-j \choose k-j} & \text{ if $i = 0$ } \\ 
    0 & \text{ otherwise. } \\ 
\end{cases} \]
Thus, $f_i(j; n, k, 0) = \sum_{v=0}^i (-1)^{i+v} {i \choose v} c_v(j; n, k, 0) = (-1)^{i+0} {i \choose 0} c_0(j; n, k, 0)$
as claimed.
\end{proof}

\subsubsection{Kneser adjacency matrix}

\begin{theorem}
The adjacency matrix of the Kneser graph $\Gamma(n, k, \ell = 0)$ for $n \geq 3k-1$ has a diagonal form with:
\begin{center}
\begin{tabular}{ |c|c||c| } 
 \hline
 entries & with multiplicity & or in terms of eigenvalues $\ldots$ \\ 
 \hline
 $\binom{n-k-j}{k-j}$  & ${n \choose j} - {n \choose j-1}$ & $[e_j]^{\mu_j} \quad \mbox{ for $0 \leq j \leq k$}$ \\
 \hline
\end{tabular}
\end{center}
\end{theorem}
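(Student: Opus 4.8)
The plan is to specialize the general machinery to the Kneser situation, where $\ell = 0$ and $\lambda = 0$, so that Corollary~\ref{c:main} (equivalently Theorem~\ref{t:main}) applies directly. First I would invoke Lemma~\ref{l:f_kneser}, which tells us that $f_i(j; n, k, 0) = (-1)^i \binom{n-k-j}{k-j}$ depends on $i$ only through a sign. Substituting this into Definition~\ref{d:main} with $\lambda = 0$ gives that the $(k-s+1)\times(k-s+1)$ matrix $\M_s$ has $(i,j)$-entry
\[ \M_s(i,j) = (-1)^i \binom{j-s}{i-s} \binom{n-k-j}{k-j}. \]

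The key observation is that this matrix factors as $\M_s = D_L\, N_s\, D_R$, where $D_L = \diag\!\big((-1)^i : s \le i \le k\big)$, $D_R = \diag\!\big(\binom{n-k-j}{k-j} : s \le j \le k\big)$, and $N_s$ is the matrix with $(i,j)$-entry $\binom{j-s}{i-s}$. The matrix $N_s$ is a Pascal-type matrix: its entry vanishes when $i > j$ and equals $\binom{i-s}{i-s} = 1$ on the diagonal, so it is upper triangular with unit diagonal and hence unimodular. Since $D_L$ is also unimodular, $\M_s$ is integrally equivalent to $D_R$, whence
\[ S(\M_s) \cong \bigoplus_{j=s}^{k} \Z/\tbinom{n-k-j}{k-j}\Z. \]

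Next I would assemble the global Smith group via Corollary~\ref{c:main}: $S(\A_{n,k,0}) \cong \bigoplus_{s=0}^{k} S(\M_s)^{\binom{n}{s} - 2\binom{n}{s-1} + \binom{n}{s-2}}$. Collecting contributions, the entry $\binom{n-k-j}{k-j}$ appears in $S(\M_s)$ for every $s \le j$, so its total multiplicity is $\sum_{s=0}^{j}\big(\binom{n}{s} - 2\binom{n}{s-1} + \binom{n}{s-2}\big)$. Writing $\mu_s = \binom{n}{s} - \binom{n}{s-1}$, one has $\binom{n}{s} - 2\binom{n}{s-1} + \binom{n}{s-2} = \mu_s - \mu_{s-1}$, so the sum telescopes to $\mu_j = \binom{n}{j} - \binom{n}{j-1}$, which is exactly the multiplicity claimed; and since $\lambda = 0$ the diagonal entries of the $\M_s$ are (up to sign) these same $\binom{n-k-j}{k-j}$, matching the eigenvalue column $e_j$ of the table.

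I do not expect a genuine obstacle: the hypothesis $n \ge 3k-1$ is precisely what Theorem~\ref{t:main} (and hence Corollary~\ref{c:main}) requires, so no separate range verification is needed. The only points to check carefully are the unimodularity of $N_s$ (immediate from upper-triangularity with unit diagonal), the telescoping bookkeeping above, and the fact that each $\binom{n-k-j}{k-j}$ in the stated range is the intended nonnegative integer — which holds since $n \ge 2k$ — so that the displayed list is genuinely a diagonal form in the nonnegative normalization.
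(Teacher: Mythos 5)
Your proposal is correct and takes essentially the same route as the paper: both specialize Lemma~\ref{l:f_kneser} into Definition~\ref{d:main} with $\lambda=0$ and then observe that each $\M_s$ is unimodularly equivalent to the diagonal matrix of the $\binom{n-k-j}{k-j}$. Your explicit factorization $\M_s = D_L N_s D_R$ through the unit upper-triangular Pascal matrix is just a tidier packaging of the paper's observation that each diagonal entry divides every entry above it in its column, and your telescoping of the multiplicities $\mu_s-\mu_{s-1}$ to $\mu_j$ correctly fills in the bookkeeping the paper leaves implicit.
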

\begin{proof}
    By Lemma \ref{l:f_kneser}, the matrices $\M_{s}$ (with $\lambda = 0$) satisfy 
    \[
    \M_s(i,j) = (-1)^i {j - s \choose i - s} {n - k - j \choose k - j}.
    \]
    Observe that each of the diagonal entries $M_{s}(j,j)$ divides each entry $M_{s}(i,j)$ above it ($i<j$).  Thus every $\M_{s}$, and therefore $\A_{n,k,0}$, is unimodularly equivalent to the diagonal matrix of its eigenvalues.  
\end{proof}
The fact that the diagonal entries of a  diagonal form of $\A_{n,k,0}$ are equal (including multiplicities) to the eigenvalues of $\A_{n,k,0}$ has been described as ``miraculous;'' see \cite[Cor. 9.4.4]{brouwer-vanmaldeghem}.  This result also follows from Wilson's original diagonal form \cite[Theorem 2]{wilson_classic} of the unrestricted inclusion matrices, since a $k$-subset being \textit{disjoint} from another $k$-subset is the same thing as being \textit{included} in the size-$(n-k)$ complement.

\subsubsection{Kneser Laplacian matrix}

From Lemma \ref{l:f_kneser} the entries of the $\M_s$ matrices (with $\lambda = d = \binom{n-k}{k}$) are
\[ \M_s(i,j) = (-1)^i {j - s \choose i - s} {n - k - j \choose k - j} - \delta_{i,j} {n-k \choose k} \]
by Theorem~\ref{t:main}.  The minors here are more complicated than in Johnson case.
However, the diagonal entries (eigenvalues)
\[ \M_s(i,i) = (-1)^i {n - k - i \choose k - i} - {n-k \choose k}  \]
often seem to factor nicely; for $k = 2$ we obtain
\[ 0, -\frac{1}{2}(n-3)n, -\frac{1}{2}(n-1)(n-4) \]
while for $k = 3$, we have
\[ 0, -\frac{1}{6}(n-4)(n-5)n, -\frac{1}{6}(n-1)(n-5)(n-6), -\frac{1}{6}(n^2 - 10n + 27)(n-2). \]

\begin{theorem}\label{t:k2k}
The classical Laplacian of the $k=2$ Kneser graph $\G(n, 2, \ell = 0)$ for $n \geq 5$ has a diagonal form with:
\begin{center}
\begin{tabular}{ |c|c|c||c| } 
 \hline
 entry & with multiplicity & if $\ldots$ & or in terms of eigenvalues $\ldots$ \\ 
 \hline
 $\frac{1}{2}(n - 1)(n - 4)$ & ${n \choose 2} - 2n + 1$ & always & $[e_2]^{\mu_2 - \mu_1}$ \\
 $\frac{1}{4}(n - 1)(n - 3)(n - 4)n$ & $n - 2$ & always & $[e_1 e_2]^{\mu_1 - 1}$ \\
 $1$ & $n - 1$ & always & $\text{multiplicity } \mu_1$ \\
${n-3 \choose 2} = \frac{1}{2}(n-3)(n-4)$ & $1$ & always & multiplicity $1$ \\
$0$ & $1$ & always & $[0]^1$ \\
 \hline
\end{tabular}
\end{center}
\end{theorem}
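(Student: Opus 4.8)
The plan is to follow the template introduced in the proof of Theorem~\ref{t:firstapp}: write down the three matrices $\M_0,\M_1,\M_2$, compute their Smith normal forms via gcd's of minors, and assemble the diagonal form of $\L$ through Theorem~\ref{t:main}, whose multiplicities $\binom{n}{s}-2\binom{n}{s-1}+\binom{n}{s-2}$ specialize for $k=2$ to $1$, $n-2$, and $\binom{n}{2}-2n+1$ for $s=0,1,2$.

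First I would instantiate Definition~\ref{d:main} using Lemma~\ref{l:f_kneser}: for $k=2$ one has $f_0(0)=\binom{n-2}{2}$, $f_0(1)=n-3$, $f_0(2)=1$, $f_1(1)=-(n-3)$, $f_1(2)=-1$, $f_2(2)=1$. With $\lambda=d=\binom{n-2}{2}$ the diagonal entries collapse to the eigenvalues $e_0=0$, $e_1=-\tfrac12 n(n-3)$, $e_2=-\tfrac12(n-1)(n-4)$ already listed before the theorem, and after simplifying one gets
\[ \M_0=\begin{bmatrix} 0 & n-3 & 1 \\ 0 & -\tfrac{n(n-3)}{2} & -2 \\ 0 & 0 & -\tfrac{(n-1)(n-4)}{2}\end{bmatrix},\qquad \M_1=\begin{bmatrix} -\tfrac{n(n-3)}{2} & -1 \\ 0 & -\tfrac{(n-1)(n-4)}{2}\end{bmatrix},\qquad \M_2=\left[-\tfrac{(n-1)(n-4)}{2}\right]. \]
The last two are immediate: $\M_2$ contributes the single invariant factor $\tfrac12(n-1)(n-4)$, and $\M_1$ contains an entry equal to $-1$, so $\GM(1,1)=1$ and $\GM(1,2)=\lvert\det\M_1\rvert=\tfrac14 n(n-1)(n-3)(n-4)$; hence $\M_1$ contributes the pair $1$ and $\tfrac14 n(n-1)(n-3)(n-4)$.

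The content is in $\M_0$. Its first column is identically zero, so its rank is at most $2$; it is exactly $2$ for $n\ge 5$ since $\tfrac12(n-1)(n-4)\ne 0$, which accounts for the diagonal entry $0$. The entry $1$ gives $\GM(0,1)=1$. For $\GM(0,2)$ only the three $2\times2$ minors supported on the two nonzero columns matter, and a short computation gives them as $\binom{n-3}{2}$, $\;(n-1)\binom{n-3}{2}$, and $\;\binom{n}{2}\binom{n-3}{2}$; since all three are integer multiples of $\binom{n-3}{2}$ and $\binom{n-3}{2}$ itself occurs among them, $\GM(0,2)=\binom{n-3}{2}$. Thus $\M_0$ has Smith normal form $\diag\!\left(1,\binom{n-3}{2},0\right)$.

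Feeding these into Theorem~\ref{t:main} with the multiplicities above produces the table directly: $\tfrac12(n-1)(n-4)$ with multiplicity $\binom{n}{2}-2n+1$ (from $\M_2$); $\tfrac14 n(n-1)(n-3)(n-4)$ with multiplicity $n-2$ (from $\M_1$); the entry $1$ with total multiplicity $(n-2)+1=n-1$ (the $n-2$ ones from $\M_1$ together with the single one from $\M_0$); and $\binom{n-3}{2}$ and $0$ each once (from $\M_0$). The only genuine obstacle is the gcd step for $\M_0$: one must recognize the common factor $\binom{n-3}{2}$ in all three of the relevant minors, using the identities $\tfrac12(n-3)(n-1)(n-4)=(n-1)\binom{n-3}{2}$ and $\tfrac14 n(n-1)(n-3)(n-4)=\binom{n}{2}\binom{n-3}{2}$; once these are noted there is no parity or residue case analysis, which is exactly why the table reads ``always.''
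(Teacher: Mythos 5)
Your proposal is correct and follows essentially the same route as the paper: instantiate the $\M_s$ matrices via Lemma~\ref{l:f_kneser} with $\lambda=\binom{n-2}{2}$, compute the gcds of minors $\GM(s,i)$, and assemble via Theorem~\ref{t:main}; your matrices, minors, and multiplicities all agree with the paper's. Your justification of $\GM(0,2)=\binom{n-3}{2}$ by exhibiting the other two minors as integer multiples of it is in fact slightly more explicit than the paper's one-line gcd claim.
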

\begin{proof}
In this case, the $\M_s$ matrices are (for $\lambda = \binom{n-2}{2}$)
\[ \M_0 = \begin{bmatrix} 0 & n-3 & 1 \\ 0 & -{n-2 \choose 2} - (n-3) & -2 \\ 0 & 0 & -{n-2 \choose 2} + 1 \end{bmatrix} 
= \begin{bmatrix} 0 & n-3 & 1 \\ 0 & -\frac{1}{2}(n-3)n & -2 \\ 0 & 0 & -\frac{1}{2}(n-4)(n-1) \end{bmatrix}  \]
with
\[ \M_1 = \begin{bmatrix} -\frac{1}{2}(n-3)n & -1 \\ 0 & -\frac{1}{2}(n-4)(n-1) \end{bmatrix}  
\hspace{0.1in}
\M_2 = \begin{bmatrix} -\frac{1}{2}(n-4)(n-1) \end{bmatrix}. \]

We have 
\begin{itemize} 
    \item $\GM(0,1) = 1$
    \item $\GM(0,2) = \gcd(\frac{1}{2}(n - 3)(n - 4), -\frac{1}{2}(n - 1)(n - 3)(n - 4), \frac{1}{4}(n - 1)(n - 3)(n - 4)n) = {n-3 \choose 2}$
    \item $\GM(0,3) = 0$
    \item $\GM(1,1) = 1$
    \item $\GM(1,2) = \frac{1}{4}(n - 1)(n - 3)(n - 4)n$
    \item $\GM(2,1) = -\frac{1}{2}(n - 1)(n - 4)$
\end{itemize}

Remarkably, no cases were needed!
\end{proof}

Theorem~\ref{t:k2k} agrees with the main result in \cite{duceysinhill}.  As far as we know, the next result is new.

\begin{theorem}\label{t:kk3}
The classical Laplacian of the $k=3$ Kneser graph $\G(n, 3, \ell = 0)$ for $n \geq 7$ has a diagonal form with:
\begin{center}
\begin{tabular}{ |c|c||c| } 
 \hline
 entries & with multiplicity & or in terms of eigenvalues $\ldots$ \\ 
 \hline
 $\frac{1}{6}(n^2 - 10n + 27)(n-2)$ & ${n \choose 3} - 2{n \choose 2} + n$ & $[e_3]^{\mu_3 - \mu_2}$ \\
 $\frac{1}{36}(n^2 - 10n + 27)(n - 1)(n - 2)(n - 5)(n - 6)$ & ${n \choose 2} - 2n + 1$ & $[e_2 e_3]^{\mu_2 - \mu_1}$ \\
 $1$ & ${n \choose 2} - n$ & multiplicity $\mu_2$ \\

 $\frac{\frac{1}{216}(n^2 - 10n + 27)(n - 1)(n - 2)(n - 4)(n - 5)^2 (n - 6)n}{X}$, $X$ & $n - 2$ & $\left[\frac{1}{X} e_1 e_2 e_3\right]^{\mu_1 - 1}$, $[X]^{\mu_1 - 1}$ \\
 $\frac{\frac{1}{36}(n^2 - 10n + 27)(n - 4)(n - 5)^2(n - 6)}{Y}$, $Y$ & $1$ & multiplicity $1$ (twice)\\
 $0$ & $1$ & $[0]^1$ \\
 \hline
\end{tabular}
\end{center}
where 
\[ X = \gcd\left( \frac{(n - 4)(n - 5)n}{3}, \frac{(n^2 - 7n + 18)(n - 5)}{6}, \frac{(n^2 - 10n + 27)(n - 2)(n - 4)(n - 5)n}{36} \right) \]
and
\[ Y = \gcd\left( n-5,\ \frac{3(n-4)(n-5)}{2},\ \frac{(n-1)(n-3)(n-5)}{3} \right). \]
\end{theorem}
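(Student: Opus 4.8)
The plan is to follow the template established in the proof of Theorem~\ref{t:firstapp}: write down the four matrices $\M_s$ ($0 \le s \le 3$), read off the invariant factors of each from greatest common divisors of its minors, and reassemble via Theorem~\ref{t:main}. By Lemma~\ref{l:f_kneser} with $\ell = 0$ we have $f_i(j) = (-1)^i \binom{n-3-j}{3-j}$, so Definition~\ref{d:main} gives, with $\lambda = d = \binom{n-3}{3}$,
\[
\M_s(i,j) = (-1)^i \binom{j-s}{i-s}\binom{n-3-j}{3-j} - \delta_{i,j}\binom{n-3}{3}, \qquad s \le i, j \le 3 .
\]
The diagonal entries are the eigenvalues $e_0 = 0$, $e_1 = \frac{1}{6} n(n-4)(n-5)$, $e_2 = \frac{1}{6}(n-1)(n-5)(n-6)$, $e_3 = \frac{1}{6}(n^2-10n+27)(n-2)$ recorded just before the theorem (up to sign); for $n \ge 7$ these are all nonzero, so Theorem~\ref{t:main} applies (the single value $n=7$ below the bound $3k-1=8$ being verified directly if needed).

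The two smallest blocks are immediate. Since $\M_3 = [\pm e_3]$ is $1 \times 1$, it contributes a cyclic summand of order $\abs{e_3}$ with multiplicity $\mu_3 - \mu_2 = \binom{n}{3} - 2\binom{n}{2} + n$ (the first row of the table). The matrix $\M_2$ is $2 \times 2$ upper triangular with super-diagonal entry $\binom{n-6}{0} = 1$, so $\GM(2,1) = 1$ and $\M_2 \sim \diag(1,\, e_2 e_3)$; this gives the entry $e_2 e_3 = \frac{1}{36}(n^2-10n+27)(n-1)(n-2)(n-5)(n-6)$ together with a unit, each with multiplicity $\mu_2 - \mu_1 = \binom{n}{2} - 2n + 1$.

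For $\M_1$, a $3 \times 3$ upper-triangular matrix with diagonal $(-e_1,-e_2,-e_3)$, the $(1,3)$ entry is $\pm 1$, so $\GM(1,1) = 1$; setting $X := \GM(1,2)$ (the gcd of the $2 \times 2$ minors of $\M_1$), the invariant factors of $\M_1$ are $1$, $X$, and $\det(\M_1)/X = e_1 e_2 e_3 / X$. Expanding those minors --- among them $2 e_1$, the quantity $2(n-5) + e_2$, and $e_1 e_3$ --- puts $X$ in the form displayed in the theorem. As $\M_1$ occurs with multiplicity $\mu_1 - \mu_0 = n - 2$, this yields the fourth table row along with $n-2$ further units. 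Finally $\M_0$ is $4 \times 4$, block upper triangular with zero first column (whence its rank is $3$), and its $(0,3)$ entry equals $\binom{n-6}{0} = 1$; peeling off this unit pivot leaves a rank-$2$ matrix, one computes $\GM(0,1) = 1$ and $\GM(0,3) = \frac{1}{36}(n^2-10n+27)(n-4)(n-5)^2(n-6)$, and sets $Y := \GM(0,2)$ (the gcd of the $2 \times 2$ minors of $\M_0$, which simplifies to the stated form), so that the invariant factors of $\M_0$ are $1$, $Y$, $\GM(0,3)/Y$, and $0$. Collecting the unit entries across all four blocks gives total multiplicity $(\mu_2-\mu_1) + (\mu_1-\mu_0) + 1 = \mu_2$, matching the table, and the decomposition $S(\L) \cong \bigoplus_{s=0}^{3} S(\M_s)^{\binom{n}{s} - 2\binom{n}{s-1} + \binom{n}{s-2}}$ of Theorem~\ref{t:main} then assembles everything into the claimed diagonal form.

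The main obstacle is the gcd bookkeeping for $\M_0$ and $\M_1$. Once $\lambda = \binom{n-3}{3}$ is subtracted on the diagonal the eigenvalues become genuine polynomials in $n$ --- notably the irreducible quadratic $n^2 - 10n + 27$ appears --- so the $2 \times 2$ and $3 \times 3$ minors are products of linear and quadratic factors of degree up to six, and their pairwise greatest common divisors do not collapse to a single polynomial. This is exactly why $X$ and $Y$ must be carried as gcd's in the statement; as with the $k=3$ Johnson adjacency matrix theorem above and the remark following it, one then checks that each of $X$ and $Y$ depends only on $n$ modulo a small integer, a routine finite verification best done with a computer algebra system (cf. Example~\ref{ex:scheme}).
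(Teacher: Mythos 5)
Your proposal is correct and follows essentially the same route as the paper: compute the $\M_s$ via Lemma~\ref{l:f_kneser}, read off invariant factors from gcds of minors (with $X=\GM(1,2)$ and $Y=\GM(0,2)$), and assemble via Theorem~\ref{t:main}. The only step you gloss is showing that the full lists of $2\times 2$ minors of $\M_1$ and $\M_0$ reduce to the three-term gcds in the statement --- the paper does this by exhibiting integral (B\'ezout) combinations expressing the remaining minors in terms of the listed ones --- and your aside about $n=7$ falling below the $n\geq 3k-1$ bound is a legitimate point the paper's sketch does not address.
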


\begin{proof}[{\it Proof sketch}]
In this case, the $\M_s$ matrices are (with $\lambda = \binom{n-3}{3}$)
\[ \M_0 = \begin{bmatrix} 0 & \frac{1}{2}(n-4)(n-5) & n-5 & 1 \\ 0 & -\frac{1}{6}(n-4)(n-5)n & -2(n-5) & -3 \\ 0 & 0 & -\frac{1}{6}(n-1)(n-5)(n-6) & 3 \\ 0 & 0 & 0 & -\frac{1}{6}(n^2 - 10n + 27)(n-2) \end{bmatrix}  \]
\[ \M_1 = \begin{bmatrix} -\frac{1}{6}(n-4)(n-5)n & -(n-5) & -1 \\ 0 & -\frac{1}{6}(n-1)(n-5)(n-6) & 2 \\ 0 & 0 & -\frac{1}{6}(n^2 - 10n + 27)(n-2) \end{bmatrix}  \]
\[ \M_2 = \begin{bmatrix} -\frac{1}{6}(n-1)(n-5)(n-6) & 1 \\ 0 & -\frac{1}{6}(n^2 - 10n + 27)(n-2) \end{bmatrix}  \]
\[ \M_3 = \begin{bmatrix} -\frac{1}{6}(n^2 - 10n + 27)(n-2) \end{bmatrix}, \]
and most of the minor gcd computations are straightforward:
\begin{itemize} 
    \item $\GM(0,1) = \GM(1,1) = \GM(2,1) = 1$
    \item $\GM(0,3) = \frac{1}{36}(n^2 - 10n + 27)(n - 4)(n - 5)^2(n - 6)$
    \item $\GM(0,4) = 0$
    \item $\GM(1,3) = \frac{1}{216}(n^2 - 10n + 27)(n - 1)(n - 2)(n - 4)(n - 5)^2 (n - 6)n$
    \item $\GM(2,2) = \frac{1}{36}(n^2 - 10n + 27)(n - 1)(n - 2)(n - 5)(n - 6)$
    \item $\GM(3,1) = \frac{1}{6}(n^2 - 10n + 27)(n-2)$.
\end{itemize}
However, it remains to work out values for $X = \GM(1,2)$ and $Y = \GM(0,2)$.

For $X$, the complete list of minors (up to sign, obtained using Sage software \cite{sagemath}) is
\[ \begin{aligned}[t] 
 x_1 :=\ & (n - 4)(n - 5)n/3, \\[-.3em]
 x_2 :=\ & (n^2 - 7n + 18)(n - 5)/6, \\[-.3em]
 x_3 :=\ & (n^2 - 10n + 27)(n - 2)(n - 4)(n - 5)n/36, \\[-.3em]
 & (n - 1)(n - 4)(n - 5)^2(n - 6)n/36, \\[-.3em]
 & (n^2 - 10n + 27)(n - 2)(n - 5)/6, \\[-.3em]
 & (n^2 - 10n + 27)(n - 1)(n - 2)(n - 5)(n - 6)/36 \\[-.3em]
\end{aligned} \]
and we claim that the last three are redundant in the gcd computation.  For example, the fourth row $(n - 1)(n - 4)(n - 5)^2(n - 6)n/36$ can already be written as an (integral!) linear combination 
\[ -(n+5)x_1 + \frac{n(n-4)(n-5)}{6} x_2 \]
of the first and second rows.  Thus by B\'ezout's identity we may ignore it in the computation of the gcd of these minors.  One may obtain similar integer linear combinations for the fifth and sixth rows, resulting in the expression for $X$ given in the statement.

Similarly for $Y$, one may consider the complete list of nonzero minors
\[ \begin{aligned}[t] 
 y_1 :=\ & (n - 5), \\[-.3em]
 y_2 :=\ & (3/2)(n - 4)(n - 5), \\[-.3em]
 & (1/6)(n - 4)(n - 5)^2(n - 6), \\[-.3em]
 & (1/6)(n - 4)(n - 5)(n - 9), \\[-.3em]
 & (1/12)(n - 1)(n - 4)(n - 5)^2(n - 6), \\[-.3em]
 & (1/6)(n^2 - 7n + 24)(n - 5), \\[-.3em]
 & (1/12)(n^2 - 10n + 27)(n - 2)(n - 4)(n - 5), \\[-.3em]
 & (1/6)(n^2 - 10n + 27)(n - 2)(n - 5), \\[-.3em]
 & (1/36)(n - 1)(n - 4)(n - 5)^2(n - 6)n, \\[-.3em]
 & (1/2)(n - 4)(n - 5)n, \\[-.3em]
 & (1/2)(n^2 - 7n + 18)(n - 5), \\[-.3em]
 & (1/36)(n^2 - 10n + 27)(n - 2)(n - 4)(n - 5)n, \\[-.3em]
 & (1/3)(n^2 - 10n + 27)(n - 2)(n - 5), \\[-.3em]
 & (1/36)(n^2 - 10n + 27)(n - 1)(n - 2)(n - 5)(n - 6), \\[-.3em]
\end{aligned} \]
and observe that each may be expressed as an integral linear combination of the first two rows, together with $y_3 := (1/3)(n-1)(n-3)(n-5)$.  Some helpful formulas in this pursuit are:
\[ n^2 - 7n + 18 = n(n-1)-6(n-3), \]
\[ n^2 - 7n + 24 = n(n-1)-6(n-4), \]
\[ n^2 - 10n + 27 = (n-1)(n-3)-6(n-4). \]
Taking pairwise ratios of the resulting gcds yields the results stated in the table.
\end{proof}

\bigskip
While we have tried to present our results in a reasonably explicit style, that reflects existing theorems in the literature, it is difficult to avoid confronting a meta-mathematical issue (e.g. \cite{wilf}) regarding ``What (precisely) constitutes a satisfactory answer for these diagonal form problems?''  

In Theorem~\ref{t:kk3}, for example, there does not seem to be a canonical way to choose the formulas from the list of minors of $\M_s$ to which we apply our integer gcd operation.  In fact, we did not verify that two gcd computations with three terms each {\em is} the minimal form of a correct statement.  Yet, in some sense, we feel that the $\M_s$ matrices themselves are already the real ``answer'' and that any case statements on $n$ or gcd computations we employ are more in the direction of ``fiddly details'' reflecting a subjective expectation about the form of the answer, rather than anything intrinsic to the mathematics of the question itself.

By contrast, we have tried to highlight in the last column of each table how our diagonal forms may be written in terms of the eigenvalues, reflecting a deeper relationship between them.  These examples seem to suggest the existence of a unified formula, independent from both $n$ and $\ell$, that realizes the Smith group directly in terms of the eigenvalues and their multiplicities.

\appendix
\section*{An appendix on some super-standard combinatorics}
\setcounter{section}{5}  
\setcounter{theorem}{0}  
\setlength{\arraycolsep}{1.5pt}
\setcounter{MaxMatrixCols}{99}

The careful reader will have noticed that there is some choice in our construction of the $E_s$ matrices that diagonalize the $\W_{i,j}$ blocks in Theorem~\ref{t:bier-std} at the point where we appeal to Wilson's Proposition 2 in order to adjoin rows in the induction.  Although it is not logically necessary for any of our results, we offer the following outline towards a canonical choice for $E_s$ for those interested in implementing these matrices in a standard fashion or in comparing with the non-recursive situation that Bier found vis-\`a-vis $\P$ and $\W_{i,j}^{\mathsf{classic}}$.

\begin{definition}
We say that a set $\beta = \{b_1, b_2, \ldots, b_k\} \subseteq \{1, 2, \ldots, n\}$ is {\bf super-standard} if 
\[ 2i \leq b_i < (n-2k) + 2i \ \ \text{ for all $i = 1, 2, \ldots, k$ } \]
where the $b_i$ form an ordered labeling of the elements (so $b_1 < b_2 < \ldots < b_k$).
\end{definition}

If we visualize our sorted list of entries, graphically, as an increasing selection of columns for each row from $1$ to $k$ in a table, it may be easier to see how this definition relates to the earlier ones we've used.  For example, with $n = 12$ and $k = 3$, we obtain
\[ \scalemath{0.8}{
\begin{pmatrix}
    1 & 2 & 3 & 4 & 5 & 6 & 7 & 8 & 9 & 10 & 11 & 12 \\
    1 & 2 & 3 & 4 & 5 & 6 & 7 & 8 & 9 & 10 & 11 & 12 \\
    1 & 2 & 3 & 4 & 5 & 6 & 7 & 8 & 9 & 10 & 11 & 12 \\
\end{pmatrix} \ \ 
\begin{pmatrix}
    . & 2 & 3 & 4 & 5 & 6 & 7 & 8 & 9 & 10 & 11 & 12 \\
    . & . & . & 4 & 5 & 6 & 7 & 8 & 9 & 10 & 11 & 12 \\
    . & . & . & . & . & 6 & 7 & 8 & 9 & 10 & 11 & 12 \\
\end{pmatrix} \ \ 
\begin{pmatrix}
    . & 2 & 3 & 4 & 5 & 6 & 7 & . & . & .  & .  & . \\
    . & . & . & 4 & 5 & 6 & 7 & 8 & 9 & .  & .  & . \\
    . & . & . & . & . & 6 & 7 & 8 & 9 & 10 & 11 & . \\
\end{pmatrix}
} \]
as our tables for the unrestricted, standard, and super-standard subsets, respectively.  The selection $\{2, 10, 11\}$ is a valid subset (of increasing columns for each of the three rows) in the first two tables, but not for the third.  By definition, there will always be $n-2k$ entries in each row of the super-standard table.

\begin{definition}
Suppose $\widetilde\W_{i,j}(n)$ is the inclusion matrix whose rows are indexed by {\it super}-standard $i$-subsets of $\{1, \ldots, n\}$, and whose columns are indexed by {\it standard} $j$-subsets, with entries $\widetilde\W_{i,j}(\alpha, B) = 1$ if $\alpha \subseteq B$ and $\widetilde\W_{i,j}(\alpha, B) = 0$ otherwise.  For $0 \leq i, j \leq n$, let $\widetilde{\P}_{i,j}(n)$ be the stacked matrix $\bigcup_{s=0}^{i}\widetilde\W_{s,j}(n)$.
\end{definition}

Based on computational evidence and the results in this appendix, we believe the following.

\begin{conjecture}\label{c:sso}
Let $0 \leq i \leq \frac{n+1}{3}$ and $0 \leq j \leq \frac{n+1}{3}$.  Then, $\widetilde{\P}_{i,j}$ has dimensions $\mu_i \times \mu_j$, index $1$, and full rank.  Consequently, $\widetilde\P_{k,k}$ is unimodular for all $n \geq 3k - 1$.
\end{conjecture}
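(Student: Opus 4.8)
\noindent
The plan would be to develop $\widetilde\P_{i,j}$ in parallel with the stacked standard-subset matrices $M_{i,j}$ of Section~\ref{s:e_matrix}; indeed Conjecture~\ref{c:sso} is, for the inclusion matrices $\W_{i,j}$ of standard subsets, the precise analogue of Bier's theorem that $\P_i$ diagonalizes the inclusion matrices of \emph{unrestricted} subsets. The first step is the dimension count. The super-standard condition $2m \le b_m < (n-2s)+2m$ on a size-$s$ subset $\{b_1 < \cdots < b_s\}$ is exactly the statement ``standard, and standard again after the order-reversing relabeling $b_m \mapsto n+1-b_{s+1-m}$'' — the upper inequalities are carried onto the lower ones and conversely. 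Counting the subsets that are standard from both ends is a doubly-constrained lattice-path problem, and a two-fold application of the reflection principle should give $\mu_s - \mu_{s-1} = \binom{n}{s} - 2\binom{n}{s-1} + \binom{n}{s-2}$ super-standard $s$-subsets whenever $3s \le n+1$. Summing over $0 \le s \le i$ telescopes to $\mu_i$, so $\widetilde\P_{i,j}$ has $\mu_i$ rows; it has $\mu_j$ columns since there are $\mu_j$ standard $j$-subsets. In particular each $\widetilde\P_{k,k}$ is square.

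Granting that $\widetilde\P_{s,s}$ is unimodular for every $s \le \frac{n+1}{3}$, I would obtain the full statement for rectangular indices $i \le j$ as follows (the cases $i > j$ are degenerate, since super-standard subsets of size $>j$ give zero rows of $\widetilde\P_{i,j}$). Counting the standard $i$-subsets nested between a fixed super-standard subset and a fixed standard $j$-subset yields the identity
\[ \widetilde\P_{i,i}\, \W_{i,j} = \Lambda\, \widetilde\P_{i,j}, \]
where $\Lambda$ is the diagonal matrix whose entry in the row of a super-standard subset of size $a$ equals $\binom{j-a}{i-a}$. Because $\widetilde\P_{i,i}$ is unimodular, $\rowz(\W_{i,j}) = \rowz(\widetilde\P_{i,i}\W_{i,j}) = \rowz(\Lambda\, \widetilde\P_{i,j})$, so $\rowz(\W_{i,j})$ sits inside $\rowz(\widetilde\P_{i,j})$ with index $\det\Lambda = \prod_{s} \binom{j-s}{i-s}^{\mu_s - \mu_{s-1}} = d_{i,j}$; moreover $\rowq(\widetilde\P_{i,j}) = \rowq(\W_{i,j})$, so $\widetilde\P_{i,j}$ has full rank $\mu_i$ by Lemma~\ref{l:mind1} and $Z(\widetilde\P_{i,j}) = Z(\W_{i,j})$. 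Since $D_{i,j}$ is a diagonal form of $\W_{i,j}$ (Theorem~\ref{t:standard_bier}), the index $[Z(\W_{i,j}) : \rowz(\W_{i,j})]$ is exactly $d_{i,j}$, whence the index of $\widetilde\P_{i,j}$ is $[Z(\W_{i,j}) : \rowz(\widetilde\P_{i,j})] = d_{i,j}/d_{i,j} = 1$. Thus the whole conjecture reduces to the unimodularity of the square matrices $\widetilde\P_{k,k}$.

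To attack that, split the standard $k$-subset columns of $\widetilde\P_{k,k}$ into the super-standard ones and the rest, and split the rows into super-standard $(\le k-1)$-subsets and super-standard $k$-subsets. Since a super-standard $k$-subset is contained in a standard $k$-subset only if the two coincide, $\widetilde\P_{k,k}$ takes the block form $\bigl(\begin{smallmatrix} T_0 & \ast \\ 0 & I\end{smallmatrix}\bigr)$, in which $T_0$ is the $\mu_{k-1} \times \mu_{k-1}$ inclusion matrix of super-standard $(\le k-1)$-subsets against those standard $k$-subsets that are \emph{not} super-standard (by the count above there are $\mu_{k-1}$ of each). Hence the conjecture comes down to $\det T_0 = \pm 1$, and I expect this to be the crux. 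The natural attack is to order the two index sets so that $T_0$ becomes triangular with unit diagonal — most plausibly via a canonical map sending a non-super-standard standard $k$-subset $\gamma$ to the super-standard subset obtained by deleting from $\gamma$ the element that witnesses its first violated upper bound — or else to exhibit $\widetilde\P_{k,k}$ directly as a product of the Bier-type matrices of Sections~\ref{s:p_matrix}--\ref{s:e_matrix}. What blocks the obvious alternative, an induction on $n$ in the style of Lemma~\ref{l:mind1}, is that deleting the largest element $n$ does not carry super-standard subsets of $\{1, \ldots, n\}$ to super-standard subsets of $\{1, \ldots, n-1\}$: such a subset can never contain $n$ to begin with, but the upper bound $b_m < (n-2s)+2m$ tightens when $n$ decreases, so the recursion does not close. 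Closing this gap by one of these two routes is precisely what is needed to promote Conjecture~\ref{c:sso} to a theorem.
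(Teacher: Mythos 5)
First, a point of calibration: the statement you are addressing appears in the paper only as Conjecture~\ref{c:sso}; the appendix supplies supporting lemmas and sketches an induction on $n$ but explicitly stops short of a proof, so there is no complete argument on either side. The reductions you do carry out are sound and run parallel to the paper's partial results. Your identity $\widetilde\P_{i,i}\,\W_{i,j} = \Lambda\,\widetilde\P_{i,j}$ is the same nested-subset count as Lemma~\ref{l:simpler}, and your index computation (combining it with Lemma~\ref{l:mind1} and the diagonal form $D_{i,j}$ of Theorem~\ref{t:standard_bier} to force $d_{i,j} = [\text{index of }\widetilde\P_{i,j}]\cdot d_{i,j}$) correctly reduces the rectangular case to the unimodularity of the square matrices $\widetilde\P_{k,k}$, granting the row count. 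The further reduction $\widetilde\P_{k,k} \sim \bigl(\begin{smallmatrix} T_0 & \ast \\ 0 & I\end{smallmatrix}\bigr)$, so that everything hinges on $\det T_0 = \pm 1$ for the $\mu_{k-1}\times\mu_{k-1}$ inclusion matrix of super-standard $(\le k-1)$-subsets against non-super-standard standard $k$-subsets, is a clean observation that the paper does not make.

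The gap is that both essential ingredients remain unproved. (a) The enumeration of super-standard $s$-subsets as $\mu_s - \mu_{s-1}$ is asserted via ``a two-fold application of the reflection principle should give\ldots''; the strip-reflection formula is an alternating sum with further terms, and you must verify that those vanish exactly under $3s \le n+1$. (The paper does establish this count via the bijections $\varphi$ of Lemma~\ref{l:sso_biject}, so you could cite that instead.) (b) The crux, $\det T_0 = \pm 1$, is left entirely open: you propose two attacks (a triangularizing bijection deleting the witness of the first violated upper bound, or a factorization through the Bier-type matrices) but execute neither, and you concede as much. Finally, your diagnosis that induction on $n$ is blocked because no super-standard subset contains $n$ is too pessimistic: the appendix circumvents precisely this by splitting on boundary versus interior subsets rather than on membership of $n$, and the projection $\varphi$ (delete the first boundary entry and shift down by one) does carry boundary super-standard subsets of $\{1,\dots,n\}$ bijectively onto super-standard subsets of $\{1,\dots,n-1\}$, while interior ones biject with super-standard subsets of $\{1,\dots,n-1\}$ by subtracting $1$ from each entry. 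What is unverified there is only the entry-level bookkeeping beyond Lemma~\ref{l:ssoequiv} needed to clear the off-diagonal block. So under your proposal, as in the paper, the statement remains a conjecture.
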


\begin{remark}
The $j$ parameter for $\widetilde\P$ is not unrestricted:  recall that when $j > \frac{n}{2}$ there are no $j$-standard subsets of $n$ at all.  Moreover, we empirically find failures of the full-rank condition for some values of $j$ even when $i$ is within its bounds; for example, $\widetilde\P_{3,4}(9)$ has dimensions $48 \times 42$ but rank $41$. 
\end{remark}

This combinatorics was developed based a ``greedy'' implementation of the construction in Theorem~\ref{t:bier-std}.  Interestingly though, the specific inequalities in the super-standard definition don't really play a direct role in diagonalizing the $\W_{i,j}$.  It turns out that any sub-collection of standard subsets that induce the correct dimensions could have done that!  (Most sub-collections will not have unimodular change-of-basis matrices, however.)

\begin{lemma}\label{l:simpler}
Suppose $i \leq j$.  If $\widetilde\P_{i,i}$ is any matrix with dimensions $\mu_i \times \mu_i$ that encodes the inclusion relation for some collection of ``super-standard'' objects (and similarly for $\widetilde\P_{j,j}$), then
\[ \widetilde\P_{i,i} \ \W_{i,j} =  D_{i,j} \ \widetilde\P_{j,j} \]
where $D$ is the diagonal matrix from Definition~\ref{d:diag_for_w}.
\end{lemma}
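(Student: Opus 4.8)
The plan is a direct entrywise verification, using (as above) the fact that the standard subsets form an order ideal, so that every subset of a standard subset is again standard.

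First I would pin down the indexing that makes the two sides comparable. By construction the rows of $\widetilde{\P}_{i,i}$ are the chosen super-standard subsets of sizes $0,1,\dots,i$. For the statement to be coherent, the block structure of $D_{i,j}$ from Definition~\ref{d:diag_for_w} — whose diagonal consists of $\binom{j-s}{i-s}$ repeated $\mu_s-\mu_{s-1}$ times for $0\le s\le i$, followed by $\mu_j-\mu_i$ zero columns — must line up with these rows; so there are $\mu_s-\mu_{s-1}$ super-standard subsets of each size $s$, and the rows of $\widetilde{\P}_{i,i}$ agree, in the same order, with the first $\mu_i$ rows of $\widetilde{\P}_{j,j}$. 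The columns of $\widetilde{\P}_{i,i}$ and of $\W_{i,j}$ run over all standard $i$-subsets; the columns of $\W_{i,j}$, of $D_{i,j}$, and of $\widetilde{\P}_{j,j}$ run over all standard $j$-subsets. With these conventions, $D_{i,j}\widetilde{\P}_{j,j}$ is obtained from $\widetilde{\P}_{j,j}$ by discarding all but its first $\mu_i$ rows and scaling the row indexed by a size-$s$ super-standard subset by $\binom{j-s}{i-s}$.

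Next I would compute the left-hand side entrywise. Let $\alpha$ be a super-standard $s$-subset ($0\le s\le i$) indexing a row, and let $B$ be a standard $j$-subset indexing a column. The $(\alpha,B)$-entry of $\widetilde{\P}_{i,i}\W_{i,j}$ is
\[
\sum_{\gamma}[\alpha\subseteq\gamma]\,[\gamma\subseteq B] = \#\{\gamma : |\gamma|=i,\ \alpha\subseteq\gamma\subseteq B\},
\]
the sum being over all standard $i$-subsets $\gamma$. If $\alpha\not\subseteq B$ this is $0$; if $\alpha\subseteq B$, then choosing $\gamma$ amounts to choosing the $(i-s)$-element set $\gamma\setminus\alpha$ inside the $(j-s)$-element set $B\setminus\alpha$, and every such $\gamma$ is automatically standard since it is a subset of the standard set $B$, so the count is exactly $\binom{j-s}{i-s}$. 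Hence this entry equals $\binom{j-s}{i-s}\,[\alpha\subseteq B]$, which is precisely the corresponding entry of $D_{i,j}\widetilde{\P}_{j,j}$ described above. As $\alpha$ and $B$ were arbitrary, the matrices coincide.

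I do not expect a genuine obstacle here. The only things needing care are the bookkeeping in the previous paragraph — matching the $\binom{j-s}{i-s}$-scaled, size-$s$ blocks of $D_{i,j}$ (and its trailing zero columns) to a consistent ordering of the super-standard rows of $\widetilde{\P}_{i,i}$ and $\widetilde{\P}_{j,j}$ — and the observation that the intermediate $i$-subsets $\gamma$ appearing in the product are automatically standard, so that $\W_{i,j}$ really contributes the full binomial coefficient and not a smaller count. The case $i=j$ is the trivial instance $\binom{j-s}{i-s}=1$ with $\W_{i,i}=D_{i,i}$ the identity, and requires no separate argument.
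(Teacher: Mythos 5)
Your proof is correct and follows essentially the same route as the paper's: an entrywise comparison in which the $(\alpha,B)$-entry of the left side counts standard $i$-subsets $\gamma$ with $\alpha\subseteq\gamma\subseteq B$, the order-ideal property of standard subsets upgrades this count to the full binomial coefficient $\binom{j-s}{i-s}$, and a compatible ordering of the super-standard rows of $\widetilde{\P}_{i,i}$ and $\widetilde{\P}_{j,j}$ matches this with the right side. No issues.
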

\begin{proof}
Consider the $(A, B)$-entry of the matrix product on each side of the equality, where $A$ is a super-standard $(\leq i)$-subset, and $B$ is a standard $j$-subset.  The $(A, B)$-entry of the left side $\widetilde\P_{i,i} \W_{i,j}$ is a count of the standard $i$-subsets $S$ such that $A \subseteq S \subseteq B$.  Since $B$ is standard, and any subset of a standard subset is standard, we may view $S$ as unrestricted.  (The fact that $A$ is ``super-standard'' places no restrictions on $S$.) Letting $s = |A|$, we find that the $(A,B)$-entry is simply the number of these subsets $S$, namely:  ${ j-s \choose i-s }$ if $A \subseteq B$, or $0$ if $A \not \subseteq B$.

We have that $D_{i,j}$ is a diagonal matrix with the same dimensions $\mu_i \times \mu_j$ as $\W_{i,j}$.  To evaluate the right side, we assume that rows and columns are ordered compatibly in the following sense: 
\begin{quote}
The rows of $\widetilde\P_{j,j}$ are indexed by super-standard $(\leq j)$-subsets, which contain the super-standard $(\leq i)$-subsets.  We arrange the $(\leq i)$-subsets first so that the initial rows of $\widetilde\P_{i,i}$ and $\widetilde\P_{j,j}$ agree.
\end{quote}
Then, the $(A, B)$-entry of $D_{i,j} \widetilde\P_{j,j}$ is nominally a sum over super-standard $(\leq j)$-subsets $T$ where $T \subseteq B$.  But as $D_{i,j}$ only includes non-zero entries along the diagonal, it really it singles out one entry:  $T$ corresponds to the row of $\widetilde\P_{j,j}$ that was indexed by $A$ on the left side of the equality.  Since $A$ is a super-standard $(\leq i)$-subset, it is also a super-standard $(\leq j)$-subset and, by our compatibility assumption, these agree.

Thus, if $A \subseteq B$ then the entry of $\widetilde\P_{j,j}$ corresponding to row $A$ column $B$ will be $1$ and after multiplying by $D_{i,j}$ we get the same binomial coefficient formula on the right side as we did on the left side; otherwise, we get $0$.
\end{proof}

So, {\bf assuming that Conjecture~\ref{c:sso} is true (and it can be easily checked by any code that uses it), Lemma~\ref{l:simpler} shows we may take $\widetilde\P_{s,s}$ for the $E_s$ matrices} in the earlier sections of this paper.  

Turning towards actually proving the conjecture, we would like to break into smaller blocks based on the ``add or remove $n$'' bijection.  Unfortunately, this immediately fails as none of the super-standard subsets contain $n$.  Instead, we take the following approach.

\begin{definition}
We say that a standard subset is {\bf on the boundary} if it contains any entry $2i$ in sorted position $i$.  These are the leftmost entries from each row of the corresponding graphical tables for standard and super-standard subsets.  Otherwise, we say that the standard subset is {\bf in the interior}.
\end{definition}

As an example, $\{3, 4, 7\}$ is a subset that is on the boundary while $\{4, 5, 8\}$ is in the interior.  Even though both subsets contain the element $4$, it is only a boundary entry for the first subset.

\begin{proposition}
Fix $n$ and $k$.  The super-standard $k$-subsets of $n$ in the interior are in bijection with the set of (all) super-standard $k$-subsets of $n-1$.  Thus, the ``interior block'' obtained by restricting rows and columns of $\widetilde\P_{i,j}(n)$ to subsets in the interior is just $\widetilde\P_{i,j}(n-1)$.  In addition, the ``row boundary/column interior'' block is zero.
\end{proposition}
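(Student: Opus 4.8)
The plan is to exhibit one map --- the ``shift down by one'' assignment $\phi(\beta) = \{\, b - 1 : b \in \beta \,\}$ --- and observe that, because it is visibly order- and inclusion-preserving, it witnesses all three assertions at once.

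First I would verify the bijection on index sets. Write a super-standard $k$-subset of $n$ as $\beta = \{b_1 < b_2 < \cdots < b_k\}$, so $2i \le b_i < (n - 2k) + 2i$ for every $i$. Being \emph{in the interior} means $b_i > 2i$, i.e. $2i + 1 \le b_i \le (n - 1 - 2k) + 2i$; subtracting $1$ then places $\phi(\beta)$ in the range $2i \le b_i - 1 < \bigl((n-1) - 2k\bigr) + 2i$, which is exactly the super-standard condition relative to the ground set $\{1, \ldots, n-1\}$. The inverse map (add $1$ to every element) is checked the same way, yielding the stated bijection between interior super-standard $k$-subsets of $n$ and all super-standard $k$-subsets of $n-1$. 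I would note that this inequality check runs verbatim for any size $s \le k$, so $\phi$ likewise matches interior super-standard $s$-subsets of $n$ with super-standard $s$-subsets of $n-1$; and, using only the lower bound $b_i \ge 2i$, it matches interior standard $j$-subsets of $n$ with standard $j$-subsets of $n-1$.

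Next I would deduce the interior-block statement. Since $\phi$ preserves the containment relation in both directions, relabelling by $\phi$ the interior rows and interior columns of each block $\widetilde\W_{s,j}(n)$ turns that restricted matrix into $\widetilde\W_{s,j}(n-1)$ exactly; stacking these identifications over $s = 0, 1, \ldots, i$ then identifies the interior block of $\widetilde\P_{i,j}(n)$ with $\widetilde\P_{i,j}(n-1)$. For the vanishing of the row-boundary/column-interior block I would argue directly: if $\alpha = \{a_1 < \cdots < a_s\}$ is a boundary super-standard subset, say $a_i = 2i$, and $B = \{b_1 < \cdots < b_j\}$ is an interior standard subset, so $b_m \ge 2m + 1$ for every $m$, then $\alpha \subseteq B$ would force $a_i$ to occupy some position $m \ge i$ of $B$, giving $2i = a_i = b_m \ge 2m + 1 \ge 2i + 1$, a contradiction; hence $\alpha \not\subseteq B$ and every entry of that block is $0$.

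I do not expect a genuine obstacle: the whole argument is a chain of off-by-one inequality checks together with the trivial fact that a uniform shift respects inclusion. The only thing to watch is bookkeeping --- the size $s$ of a subset is itself the parameter playing the role of ``$k$'' in the super-standard upper bound, so when comparing rows of different sizes inside the stacked matrix one must re-instantiate that bound for each $s$; once that is done, every step is routine.
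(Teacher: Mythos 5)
Your proposal is correct and follows essentially the same route as the paper: the shift-down-by-one map as the (inclusion-preserving, reversible) bijection between interior subsets of $n$ and all subsets of $n-1$, plus a positional/pigeonhole argument showing a boundary row index can never be contained in an interior column index. Your write-up is if anything slightly more careful than the paper's, since you explicitly check both the super-standard bounds for the rows and the standard bound for the columns.
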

\begin{proof}
We can simply subtract $1$ from the value of each entry in the subset to pass from the first collection to the second.  This is a reversible operation that preserves subset inclusion.

Also, no subset $R$ from the boundary collection can be contained in a subset $S$ from the interior collection.  To see this, suppose that $e$ is a boundary entry (that is, $e = 2i$ in position $i$) of $R$.  If $R \subseteq S$, there are $i$ entries (including $e$), with values between $2$ and $e$, that would all need to appear among the first $i-1$ positions from $S$, since all of the entries of $S$ at position $i$ and greater must have values larger than $e$, by the interior condition.  This is impossible.
\end{proof}

The ``boundary block'' of $\widetilde\P_{i,j}(n)$ turns out to be a bit more complicated.  We need a reversible projection (the analogue of ``removing $n$'') in order to relate it to a matrix we can already understand via an induction hypothesis.

\begin{definition}\label{d:phi}
Given a standard $k$-subset of $n$ on the boundary, called $S$ say, we define an operation $\varphi(S)$ that removes one entry from $S$.  Look at the graphical table corresponding to $S$, and attempt to remove entries corresponding to the leftmost column, $2$, $4$, $6$, etc. in each row of the graphical table, working from row $1$, forwards to row $k$.  As soon as one of these entries $e$ are found in (the correct sorted position of!) $S$, remove $e$ and subtract $1$ from each of the rest of the entries, returning 
\[ \varphi(S) = \{ s-1 : s \in S \setminus \{e\} \}. \]
\end{definition}

\begin{example}\label{e:phi}
Here is an example from $n = 12$.  We show that the super-standard $3$-subsets of $n$ on the boundary are in bijection with (all of) the super-standard $2$-subsets of $n-1$.  View each table diagram as a {\em collection} of super-standard subsets satisfying conditions on the entries illustrated in the table.  We put a $\partial$ symbol in front of the diagram to mean that we only select subsets on the boundary for our collection, and use $+$ for union.  Then we can perform the following ``diagram calculus'' to verify that $\varphi$ is a bijection.

In the first step, we project the subsets containing $2$ to $k = 2$ by removing it and shifting the remaining entries left by one, while subsets that do not contain $2$ remain to be considered:
\[ \scalemath{0.8}{
\partial 
\begin{pmatrix}
    {\ul 2} & 3 & 4 & 5 & 6 & 7  \\
    &   & 4 & 5 & 6 & 7 & 8 & 9 \\
    &   &   &   & 6 & 7 & 8 & 9 & 10 & 11 \\
\end{pmatrix}
= \begin{pmatrix}
    &   & 3 & 4 & 5 & 6 & 7 & 8 \\
    &   &   &   & 5 & 6 & 7 & 8 & 9 & 10 \\
\end{pmatrix}
+
\partial \begin{pmatrix}
       & 3 & 4 & 5 & 6 & 7  \\
        &   & 4 & 5 & 6 & 7 & 8 & 9 \\
        &   &   &   & 6 & 7 & 8 & 9 & 10 & 11 \\
\end{pmatrix}.
} \]
Notice that when $2$ is present and we remove it, we obtain all the elements (both on the boundary and in the interior) in the projected collection for $k=2$.
Performing the same dichotomy and projection for using the element $4$ (when it appears in the second position), we obtain
\[ \scalemath{0.8}{
\partial \begin{pmatrix}
       & 3 & 4 & 5 & 6 & 7  \\
     &   & {\ul 4} & 5 & 6 & 7 & 8 & 9 \\
        &   &   &   & 6 & 7 & 8 & 9 & 10 & 11 \\
\end{pmatrix}
= 
\begin{pmatrix}
    2 & . & . & . & . & .  \\
      &  &  & 5 & 6 & 7 & 8 & 9 & 10 \\
\end{pmatrix}
+ \partial \begin{pmatrix}
        & 3 & 4 & 5 & 6 & 7  \\
        &   &   & 5 & 6 & 7 & 8 \\
        &   &   &   & 6 & 7 & 8 & 9 & 10 \\
\end{pmatrix}.
} \]
Here, we are using that the selected columns must increase, so if $4$ was present in the subset on the left it must have also had a $3$ in the first row (which becomes a $2$ after the entries are shifted left by one).  Similarly, for $6$,
\[ \scalemath{0.8}{
\partial \begin{pmatrix}
        & 3 & 4 & 5 & 6 & 7  \\
        &   &   & 5 & 6 & 7 & 8 \\
        &   &   &   & {\ul 6} & 7 & 8 & 9 & 10 \\
\end{pmatrix}
= \begin{pmatrix}
        & 2 & 3 & . & . & . & . \\
        &   &   & 4 & .  & . & . & . & . \\
\end{pmatrix}
} \]
so we obtain
\[ \scalemath{0.8}{
\partial 
\begin{pmatrix}
    {\ul 2} & 3 & 4 & 5 & 6 & 7  \\
    &   & 4 & 5 & 6 & 7 & 8 & 9 \\
    &   &   &   & 6 & 7 & 8 & 9 & 10 & 11 \\
\end{pmatrix}
= \begin{pmatrix}
    &   & 3 & 4 & 5 & 6 & 7 & 8 \\
    &   &   &   & 5 & 6 & 7 & 8 & 9 & 10 \\
\end{pmatrix}
+
\begin{pmatrix}
    2 & . & . & . & . & .  \\
      &  &  & 5 & 6 & 7 & 8 & 9 & 10 \\
\end{pmatrix}
+ \begin{pmatrix}
        & 2 & 3 & . & . & . & . \\
        &   &   & 4 & .  & . & . & . & . \\
\end{pmatrix}
= \begin{pmatrix}
    & 2 & 3 & 4 & 5 & 6 & 7 & 8 \\
    &   &   & 4 & 5 & 6 & 7 & 8 & 9 & 10 \\
\end{pmatrix}
} \]
as desired.  Observe that we recover the entire {\em interior} of the image at the first step (which itself is revealed to be isomorphic to the set of (all) super-standard $(k-1)$-subsets in $n-2$), while the subsequent steps eventually recover the {\em boundary} of the image set.
\end{example}

\begin{lemma}\label{l:sso_biject}
The map $\varphi$ is a bijection of the various collections shown below.  For the columns of the boundary block of $\widetilde\P_{i,j}(n)$, we have
\[ \partial\left( \text{super-standard $j$-subsets of $n$} \right)  \xrightarrow{\ \ \varphi \ \ } \left( \text{ super-standard $(j-1)$-subsets of $n-1$ } \right) \]
\[ \partial\left( \text{non super-standard $j$-subsets of $n$} \right)  \xrightarrow{\ \ \varphi \ \ } \left( \text{ non super-standard $(j-1)$-subsets of $n-1$ } \right) \]
and for the rows, we have
\[ \partial\left( \text{super-standard $(\leq i)$-subsets of $n$} \right)  \xrightarrow{\ \ \varphi \ \ } \left( \text{ super-standard $(\leq (i-1))$-subsets of $n-1$ } \right). \]
So, these images of $\varphi$ are a permutation of the indexing elements for $\widetilde\P_{i-1,j-1}(n-1)$.
\end{lemma}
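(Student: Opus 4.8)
The plan is to realize $\varphi$ and an explicit inverse $\psi$ as mutually inverse bijections between the boundary standard subsets of $\{1,\dots,n\}$ and \emph{all} standard subsets of $\{1,\dots,n-1\}$ of one smaller size, and then to check that these maps carry super-standard subsets to super-standard subsets (and hence non-super-standard to non-super-standard); the three displayed correspondences and the identification with the rows and columns of $\widetilde\P_{i-1,j-1}(n-1)$ then fall out by restriction.

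First I would make $\varphi$ and its effect completely explicit. If $S=\{s_1<\cdots<s_m\}$ is a standard subset on the boundary and $i_0$ is the least index with $s_{i_0}=2i_0$, then minimality forces $s_p\geq 2p+1$ for all $p<i_0$, and since $s_{i_0-1}<s_{i_0}=2i_0$ we in fact get $s_{i_0-1}=2(i_0-1)+1$. Subtracting $1$ from every entry except $s_{i_0}$ therefore yields a set $T=\varphi(S)$ whose $p$th entry is $\geq 2p$ for $p<i_0$ (using $s_p\geq 2p+1$) and $\geq 2p$ for $p\geq i_0$ (using $s_{p+1}\geq 2p+2$), so $T$ is a standard subset of $\{1,\dots,n-1\}$ of size one smaller, with all entries in range. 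Moreover one reads off the structural fact that drives the whole argument: in $T$ we have $t_{i_0-1}=2(i_0-1)$ while $t_p>2p$ for every $p\geq i_0$, so $i_0-1$ is the \emph{largest} index at which $T$ meets the boundary (and $i_0=1$ exactly when $T$ has no boundary entry).

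This tells me how to invert. Given a standard subset $T=\{t_1<\cdots\}$ of $\{1,\dots,n-1\}$, let $p$ be the largest index with $t_p=2p$ (take $p=0$ if there is none), set $i_0=p+1$, and define $\psi(T)$ by leaving $t_1,\dots,t_p$ fixed, inserting $2i_0$ in sorted position $i_0$, and adding $1$ to each of $t_{i_0},t_{i_0+1},\dots$. I would then check: $\psi(T)$ is standard of size one more (the pin $t_p=2p$ and the strict inequalities $t_m\ge 2m+1$ for $m>p$ make all the required inequalities go through and place the new entry in the correct sorted slot), it lies on the boundary with least boundary index exactly $i_0$, so $\varphi(\psi(T))=T$; and $\psi(\varphi(S))=S$ because the "largest boundary index of $\varphi(S)$, plus one" equals the "least boundary index of $S$" by the structural fact above. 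Hence $\varphi$ is a bijection from the boundary standard $j$-subsets of $n$ onto the standard $(j-1)$-subsets of $n-1$ (and likewise with "$j$" replaced by "$\le i$"), matching the columns (resp. rows) of $\widetilde\P_{i-1,j-1}(n-1)$.

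It remains to track super-standardness, and this is the delicate step. The forward direction is a one-line inequality check: if $s_m\le (n-2j)+2m-1$ for all $m$, then $t_m=s_m-1$ for $m<i_0$ sits comfortably inside the (wider) super-standard window of a $(j-1)$-subset of $n-1$, while $t_m=s_{m+1}-1\le (n-2j)+2m$ for $m\ge i_0$ meets it exactly. The converse — that $\psi$ does not push an entry past the upper cutoff — is the main obstacle: here one must exploit the structural constraint that entries $t_m$ with $m\le p$ are pinned down by $t_m\le t_p-(p-m)=p+m$ to recover the missing slack, and it is precisely this bookkeeping where any genuine largeness hypothesis on $n$ (in the spirit of the standing assumption $j\le \tfrac{n+1}{3}$ of this section) enters. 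Granting it, the non-super-standard correspondence is automatic as the complementary restriction of the same bijection, and restricting $\varphi$ to $(\le i)$-subsets gives the row statement; assembling the pieces shows the images of $\varphi$ are exactly a relabeling of the indexing sets of $\widetilde\P_{i-1,j-1}(n-1)$. An alternative to the closed-form inverse, closer to the ``diagram calculus'' of Example~\ref{e:phi}, is to stratify the boundary subsets by the least position $i$ at which $2i$ occurs, project each stratum, and verify that the disjoint images reassemble (first the interior, then the successive boundary strata) to the full target collection; this has the same crux but is organized combinatorially.
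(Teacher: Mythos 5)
Your overall strategy --- exhibit an explicit inverse $\psi$ realizing $\varphi$ as a bijection onto all standard subsets of $\{1,\dots,n-1\}$ of one smaller size, then track super-standardness --- is the same as the paper's, and your ``structural fact'' (the least boundary index of $S$ equals one plus the largest boundary index of $\varphi(S)$) is exactly what makes the paper's inverse well defined. But your formula for $\psi$ is wrong. Since $\varphi(S)=\{s-1:s\in S\setminus\{e\}\}$ subtracts $1$ from \emph{every} surviving entry, the inverse must add $1$ to \emph{every} entry of $T$ before inserting $2i_0$, not only to $t_{i_0},t_{i_0+1},\dots$ while ``leaving $t_1,\dots,t_p$ fixed.'' Concretely, $\varphi(\{3,4,7\})=\{2,6\}$ (here $i_0=2$, $e=4$), but your $\psi(\{2,6\})=\{2,4,7\}\neq\{3,4,7\}$; worse, $\{2,4,7\}$ has least boundary index $1$, so $\varphi(\psi(\{2,6\}))=\{3,6\}\neq\{2,6\}$ and neither composite is the identity. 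The paper's inverse (``shift every entry to the right by one and add a new row just below containing $e+2$'') is the correct one, and with that repair your verifications that $\psi(T)$ is standard, lies on the boundary, and has least boundary index exactly $i_0$ go through essentially as written.

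The second issue is the super-standardness bookkeeping, which you explicitly leave open (``Granting it''). You are right that this is the delicate point and that a genuine largeness hypothesis on $n$ enters; in fact the claim can fail at the edge of the parameter range. With $n=8$, $j=3$ (which satisfies $j\le\frac{n+1}{3}$), the set $S=\{4,5,6\}$ is standard, on the boundary with $i_0=3$, and \emph{not} super-standard (since $s_1=4\not<(n-2j)+2=4$), yet $\varphi(S)=\{3,4\}$ \emph{is} super-standard in $\{1,\dots,7\}$: a witness at a position $m<i_0$ loses $1$ in value while its target window widens by $1$, so it can fall back inside. The paper's own one-sentence justification of the second display (``$\varphi$ won't ever remove an entry that witnesses a failure'') only accounts for witnesses at positions $m\ge i_0$ and does not address this failure mode either. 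A complete argument needs the pinning bound you sketch ($t_m\le p+m$ for $m\le p$) combined with something like $3j\le n$. So your proposal correctly diagnoses the obstacle but does not close it; as it stands the bijection part is fixable and matches the paper, while the super-standardness converse remains a genuine gap in both your write-up and, arguably, the paper's.
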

\begin{proof}
The first claim follows in general just as for Example~\ref{e:phi}.  To reverse the map, look for the lowest row containing a boundary entry ($2$, $4$, $6$, etc.), call it $e$ say, and ``inflate'' by shifting every entry to the right by one and then adding a new row just below it that contains $e+2$.  The third claim follows by repeated application of the first claim.  As the $\varphi$ map only operates on the left side of the table diagram, it won't ever remove an entry that witnesses a failure of the super-standard condition.  Consequently, the image in the second claim will still be non super-standard in $n-1$.
\end{proof}

Our bijections are already enough to recover an inductive proof for the enumeration: there are precisely $\mu_i(n) - \mu_{i-1}(n)$ super-standard $i$-subsets of $n$.  
Next, we consider how $\varphi$ affects the inclusion relations themselves.

\begin{lemma}\label{l:ssoequiv}
When we apply $\varphi$ to the rows and columns of the boundary block of $\widetilde\P_{i,j}(n)$, each of the columns indexed by a subset that contains a $2$-entry will map to precisely the same column vector in $\widetilde\P_{i-1,j-1}(n-1)$.  (The other column vectors may gain some additional $1$ entries as we apply $\varphi$.)
\end{lemma}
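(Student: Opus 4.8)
The plan is to translate the statement into a comparison of two inclusion relations and then exploit the rigidity of standard subsets. By Lemma~\ref{l:sso_biject}, applying $\varphi$ to the row and column labels of the boundary block of $\widetilde{\P}_{i,j}(n)$ produces a matrix with exactly the index sets of $\widetilde{\P}_{i-1,j-1}(n-1)$: its rows are indexed by the super-standard $(\le i-1)$-subsets of $n-1$ and its columns by the standard $(j-1)$-subsets of $n-1$. Under this relabeling the entry in row $\varphi(A)$, column $\varphi(B)$ of the relabeled boundary block is $1$ exactly when $A\subseteq B$, whereas the corresponding entry of $\widetilde{\P}_{i-1,j-1}(n-1)$ is $1$ exactly when $\varphi(A)\subseteq\varphi(B)$. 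So the task is to compare ``$A\subseteq B$'' with ``$\varphi(A)\subseteq\varphi(B)$'' as $A$ ranges over boundary super-standard subsets; the catch is that $\varphi$ may delete from $A$ a boundary entry in a different sorted position than the one it deletes from $B$.

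First I would prove, for all boundary subsets $A,B$, the one-sided implication $A\subseteq B\Rightarrow\varphi(A)\subseteq\varphi(B)$; this already yields the parenthetical assertion. Suppose $\varphi$ deletes from $A$ the boundary entry $e=2i_0$ in sorted position $i_0$, so $a_{i_0}=2i_0$ and, by minimality of $i_0$, $a_m\ge 2m+1$ for $m<i_0$. If $A\subseteq B$, then $a_1<\cdots<a_{i_0}$ are $i_0$ elements of $B$ that are all $\le 2i_0$; writing $b_1<\cdots<b_t$ for all elements of $B$ that are $\le 2i_0$, we get $t\ge i_0$ and, from standardness, $2t\le b_t\le 2i_0$, hence $t=i_0$ and $b_m=a_m$ for $m\le i_0$. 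Thus $i_0$ is also the first sorted position where $B$ meets the boundary, so $\varphi$ removes the same entry $e$ from $B$; subtracting $1$ from the remaining entries then carries $A\setminus\{e\}\subseteq B\setminus\{e\}$ to $\varphi(A)\subseteq\varphi(B)$. In particular the $1$-entries of any column of the relabeled boundary block form a subset of those of the corresponding column of $\widetilde{\P}_{i-1,j-1}(n-1)$, so that column can only acquire additional $1$'s, which is the parenthetical claim.

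Next I would handle a column indexed by a $B$ that contains the entry $2$, so that $b_1=2$, $\varphi$ deletes $2$, and $\varphi(B)=\{\,b-1:b\in B\setminus\{2\}\,\}$. By the previous step it suffices to prove the converse $\varphi(A)\subseteq\varphi(B)\Rightarrow A\subseteq B$. If $2\in A$, then $\varphi$ also removes $2$ from $A$ and $\varphi(A)\subseteq\varphi(B)$ is equivalent to $A\setminus\{2\}\subseteq B\setminus\{2\}$, hence to $A\subseteq B$. If $2\notin A$, I claim $\varphi(A)\subseteq\varphi(B)$ is impossible. Here $a_1\ge 3$, so the deleted boundary entry $e_A=a_{i_0}=2i_0$ has $i_0\ge 2$; as above $a_m\ge 2m+1$ for $m<i_0$, so $2i_0-1\le a_{i_0-1}<a_{i_0}=2i_0$ forces $a_{i_0-1}=2i_0-1$. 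Were $\varphi(A)\subseteq\varphi(B)$, that is $A\setminus\{e_A\}\subseteq B\setminus\{2\}$, then subtracting $2$ from every entry would place $A\setminus\{e_A\}$ inside $B'':=\{\,b-2:b\in B\setminus\{2\}\,\}$, which is standard because its $m$-th smallest element is $b_{m+1}-2\ge 2m$. But a subset of a standard set is standard, while $(A\setminus\{e_A\})-2$ carries in sorted position $i_0-1$ the value $a_{i_0-1}-2=2i_0-3<2(i_0-1)$ --- a contradiction. So both entries vanish here, and together with the previous step the two columns agree entrywise.

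I expect the $2\notin A$ case to be the main obstacle: one has to see that a boundary subset omitting $2$ can never be carried by $\varphi$ into the $\varphi$-image of a subset whose index contains $2$, and the clean device for this is the ``shift down by $2$'' reduction combined with the forced value $a_{i_0-1}=2i_0-1$. Everything else --- bookkeeping which boundary entry $\varphi$ removes, and pushing inclusions through order-preserving shifts --- is routine once Lemma~\ref{l:sso_biject} is available.
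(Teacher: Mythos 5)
Your proof is correct and follows essentially the same route as the paper's: the crux in both is that when $2\in B$ but $2\notin A$, the inclusion $\varphi(A)\subseteq\varphi(B)$ would force too many small elements into the standard set $B$, yielding a standardness violation. Your ``subtract $2$'' device is a cleaner packaging of the paper's row-by-row trace of forced elements, and your Step 1 (that $\varphi$ deletes the \emph{same} boundary entry from nested boundary sets) is a nice explicit justification of the parenthetical claim, which the paper asserts without proof.
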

\begin{proof}
Fix a $(\leq i)$ super-standard subset of $n$, called $R$ say, that indexes some row of the boundary block of $\widetilde\P_{i,j}(n)$.  Let $S$ be a standard $j$-subset of $n$ that indexes a column of the boundary block, and assume that $2 \in S$ (in the first position).

By definition, we know $\varphi(S) = \{ s-1 \in S \setminus \{2\} \}$ and $\varphi(R) = \{ r-1 \in R \setminus \{e\} \}$ for the first $e = 2, 4, 6, \ldots$ that exists in $R$.  If $e = 2$, we clearly have that $R \subseteq S$ if and only if $\varphi(R) \subseteq \varphi(S)$.

So suppose $e > 2$.  Then, $2 \notin R$ to begin with so removing it from $S$ won't change inclusion.  Thus, we have $R \subseteq S$ implies $\varphi(R) \subseteq \varphi(S)$.  Now if $R \not \subseteq S$ then there exists some other element $g \in R$ that is not in $S$.  Choose this so that it has minimal position among such elements.  As long as $g \neq e$, it will survive to show $\varphi(R) \not \subseteq \varphi(S)$, so suppose $g = e$.

In this case, $e-1$ must be the element we selected for the row above $e$ in $R$ and, by the minimality of $g$, we must have $e-1 \in S$.  Similarly, one of $\{e-3, e-2\}$ must be the element we selected for the row above $e-1$ in $R$, and by minimality of $g$, we must have this same element appear in $S$.  Eventually, though, we encounter a contradiction in the first row:  we must have selected some entry greater than $2$ (and less than $e$) for the first row of $R$, yet this entry cannot appear in $S$.  It certainly can't appear in the first row since we selected $2$ already, nor can it appear in any subsequent row of $S$ by the choices we already assumed (in order to avoid contradicting minimality of $g$) earlier.
\end{proof}

Since the rows with sizes larger than $j$ must be zero, the non-zero part of the boundary block of $\widetilde\P_{i,j}(n)$ for $j < i$ is square, with dimensions $\mu_{j-1}(n-1) \times \mu_{j-1}(n-1)$ by Lemma~\ref{l:sso_biject}.  Otherwise, when $i \leq j$, we have $\mu_{i-1}(n-1)$ rows.

Referring back to the diagram calculus in Example~\ref{e:phi}, the number of columns in the boundary block of $\widetilde\P_{i,j}(n)$ whose indexing set contains a $2$-entry is the same as the number of $(\leq (j-1))$ super-standard subsets of $(n-1)$ lying in the interior, or equivalently, the number of $(\leq (j-1))$ super-standard subsets of $(n-2)$, which is $\mu_{j-1}(n-2)$.

So although the matrices do not necessarily agree on every entry, they do agree by Lemma~\ref{l:ssoequiv} on $\mu_{j-1}(n-2)$ columns.  Presumably, we could diagonalize the matrices on the columns where they agree and use this to clear all the other entries in order to obtain the result, but we have not verified this in detail.  We believe it is plausible that under appropriate parameter choices $\widetilde\P_{i,j}(n)$ is integrally equivalent to a block matrix of the form
\newcommand\Tstrut{\rule{0pt}{2.6ex}}         
\newcommand\Bstrut{\rule[-0.9ex]{0pt}{0pt}}  
\begin{align*}
\widetilde{\P}_{i,j}(n) &\sim \left [ \begin{array}{c|c}
\widetilde{\P}_{i-1,j-1}(n-1) & 0 \Tstrut\Bstrut \\
\hline
\ast & \widetilde{\P}_{i,j}(n-1) \Tstrut\Bstrut 
\end{array} \right ]
\end{align*}
and that a proof by induction should then go through for Conjecture~\ref{c:sso}.

\bigskip
\section*{Acknowledgements}

This project began at the 2023 Research Experiences for Undergraduates (REU)
site at James Madison University, supported by the National Science
Foundation through DMS grant 1950370.  


\newcommand{\etalchar}[1]{$^{#1}$}
\providecommand{\bysame}{\leavevmode\hbox to3em{\hrulefill}\thinspace}
\providecommand{\MR}{\relax\ifhmode\unskip\space\fi MR }
\providecommand{\MRhref}[2]{%
  \href{http://www.ams.org/mathscinet-getitem?mr=#1}{#2}
}
\providecommand{\href}[2]{#2}

\end{document}